\def\ps@pprintTitle{%
 \let\@oddhead\@empty
 \let\@evenhead\@empty
 \def\@oddfoot{}%
 \let\@evenfoot\@oddfoot}
\newcommand{\HA}{\mathsf{HA}}
\newcommand{\Ord}{\mathrm{Ord}}
\renewcommand{\P}{\mathcal{P}}
\DeclareMathOperator{\ran}{ran}
\DeclareMathOperator{\dom}{dom}
\newcommand{\seq}[1]{\langle #1 \rangle}
\newcommand{\Seq}[2]{\langle #1 \, | \, #2 \rangle}
\newcommand{\set}[1]{\{ #1 \}}
\newcommand{\Set}[2]{\{ #1 \, | \, #2 \}}
\newcommand{\godel}[1]{\ulcorner #1 \urcorner}
\renewcommand{\phi}{\varphi}
\renewcommand{\epsilon}{\varepsilon}
\newcommand\reallywidehat[1]{%
\savestack{\tmpbox}{\stretchto{%
  \scaleto{%
    \scalerel*[\widthof{\ensuremath{#1}}]{\kern-.6pt\bigwedge\kern-.6pt}%
    {\rule[-\textheight/2]{1ex}{\textheight}}
  }{\textheight}%
}{0.5ex}}%
\stackon[1pt]{#1}{\tmpbox}%
}
\newcommand{\M}{\mathcal{M}}
\DeclareMathOperator{\Ind}{Ind}
\DeclareMathOperator{\Fn}{Fn}
\newcommand{\CZF}{\mathsf{CZF}}
\newcommand{\IZF}{\mathsf{IZF}}
\newcommand{\ZFC}{\mathsf{ZFC}}
\newcommand{\ZF}{\mathsf{ZF}}
\newcommand{\PA}{\mathsf{PA}}
\newcommand{\AC}{\mathsf{AC}} 
\newcommand{\MP}{\mathsf{MP}} 
\newcommand{\CPC}{\mathbf{CPC}}
\newcommand{\CQC}{\mathbf{CQC}}
\newcommand{\IPC}{\mathbf{IPC}}
\newcommand{\IQC}{\mathbf{IQC}}
\newcommand{\Prop}{\mathsf{Prop}}
\newcommand{\sent}{\mathsf{sent}}
\newcommand{\Logic}{\mathbf{L}}
\newcommand{\QLogic}{\mathbf{QL}}
\declaretheorem[
name = Theorem,
]{theorem}
\declaretheorem[
name = Corollary,
sibling = theorem
]{corollary}
\declaretheorem[
name = Lemma,
sibling = theorem
]{lemma}
\declaretheorem[
name = Fact,
sibling = theorem
]{fact}
\declaretheorem[
name = Proposition,
sibling = theorem
]{proposition}
\declaretheorem[
name = Claim,
numbered = no
]{claim*}
\declaretheoremstyle[%
  spaceabove=-6pt,%
  spacebelow=6pt,%
  headfont=\normalfont\itshape,%
  postheadspace=1em,%
  qed=$\blacksquare$,%
  headpunct={.}
]{mystyle}
\declaretheorem[
name = Remark,
sibling = theorem,
style=definition
]{remark}
\declaretheorem[
name = Definition,
sibling = theorem,
style=definition
]{definition}
\declaretheorem[
name = Question,
sibling = theorem,
style=definition
]{question}
\declaretheorem[
name = Construction,
sibling = theorem,
style = definition
]{construction}
\newcommand{\IKP}{\mathsf{IKP}}
\newcommand{\Graph}{\mathsf{Graph}}
\DeclareMathOperator{\rank}{rank}
\begin{document}

\title{Logics of Intuitionistic {K}ripke-{P}latek Set Theory}

\author[Rosalie Iemhoff]{Rosalie Iemhoff\fnref{fn1}}
\address[Rosalie Iemhoff]{Department of Philosophy, Utrecht University, Janskerkhof 13, 3512 BL Utrecht, The Netherlands}
\ead{r.iemhoff@uu.nl}

\author[RPadd1,RPadd2]{Robert Passmann\corref{cor1}\fnref{fn2}}
\address[RPadd1]{Institute for Logic, Language and Computation, Faculty of Science, University of Amsterdam, P.O. Box 94242, 1090 GE Amsterdam, The Netherlands}
\address[RPadd2]{St John's College, University of Cambridge, Cambridge CB2 1TP, England}
\ead{r.passmann@uva.nl}

\cortext[cor1]{Corresponding author}
\fntext[fn1]{The first author is supported by the Netherlands Organisation for Scientific Research under grant 639.073.807.}
\fntext[fn2]{The second author was supported by a doctoral scholarship of the \emph{Studienstiftung des deutschen Volkes}, the \emph{Prins Bernhard Cultuurfonds}, and partially supported by the Marie Sk{\l}odowska-Curie fellowship REGPROP (706219) funded by the European Commission at the Universit{\"a}t Hamburg.}

\begin{abstract}
    We investigate the logical structure of intuitionistic Kripke-Platek set theory $\IKP$, and show that the first-order logic of $\IKP$ is intuitionistic first-order logic $\IQC$. 
\end{abstract}

\maketitle

\section{Introduction}

Any formal system is defined in essentially two crucial steps: First, choose a logic, and second, add some axioms for mathematical content. For example, Peano Arithmetic $\PA$ is defined by some arithmetical axioms on the basis of classical first-order logic. Heyting Arithmetic $\HA$ uses the same arithmetical axioms but is based on intuitionistic first-order logic. Similar situations arise in the context of set theories: Zermelo-Fraenkel Set Theory $\ZF$ is based on classical logic while its intuitionistic and constructive counterparts, $\IZF$ and $\CZF$, are based on intuitionistic logic. 

A feature of non-classical systems is that their logical strength \emph{can} increase with adding mathematical axioms. For example, Diaconescu \cite{Diaconescu1975} proved that the Axiom of Choice $\AC$ implies the Law of Excluded Middle in the context of intuitionistic $\IZF$ set theory. In other words, the system $\IZF + \AC$ is defined on the basis of intuitionistic logic but \emph{its logic is classical}. This illustrates the importance of determining the logic of any non-classical system of interest: By showing that an intuitionistic system indeed has intuitionistic logic, one verifies the conceptual requirement that the theory should be intuitionistic. The first result in this area was proved by De Jongh \cite{DeJonghUnpublished, DeJongh1970} who showed that the logic of Heyting Arithmetic $\HA$ is intuitionistic logic. This fact is now known as \emph{De Jongh's Theorem} (see \Cref{Definition: de Jongh property theorem} for more details).

Even though there is a rich literature on constructive set theories, there has not been much focus on the logics of these theories: Passmann \cite{Passmann2020} recently proved that the propositional logic of $\IZF$ is intuitionistic propositional logic $\IPC$. On the other hand, a result of H. Friedman and Ščedrov \cite{FriedmanScedrov1986} (see \Cref{Theorem: Friedman Scedrov}) implies that the first-order logic of intuitionistic set theories including full separation, such as $\IZF$, must be strictly stronger than intuitionistic first-order logic $\IQC$. These results show that $\IZF$ is logically well-behaved on the propositional level but less so on the level of predicate logic. 

What about other constructive set theories? Determining the first-order logic of $\CZF$, one of the most studied constructive set theories, is still an open problem. Another natural constructive set theory, that has been studied in the literature, is intuitionistic Kripke-Platek set theory $\IKP$. Lubarsky \cite{Lubarsky2002} introduced $\IKP$ to investigate intuitionistic admissibility theory in the tradition of Barwise \cite{Barwise1975}. In this article, we show that $\IKP$ is a logically very well-behaved theory as the following consequences of our more general results illustrate:
\begin{enumerate}
    \item the propositional logic of $\IKP$ is intuitionistic propositional logic $\IPC$ (see \Cref{Corollary: Propositional Logic IKP}),
    \item the relative first-order logic of $\IKP$ is intuitionistic first-order logic $\IQC$ (see \Cref{Corollary: Relative First-Order Logic IKP}), 
    \item the first-order logic of $\IKP$ is intuitionistic first-order logic $\IQC$ (see \Cref{Corollary: First-Order Logic IKP}), and,
    \item the first-order logic with equality of $\IKP$ is strictly stronger than intuitionistic first-order logic with equality $\IQC^=$ (see \Cref{Corollary: First-order logic with equality of IKP is stronger}).
\end{enumerate}

An important byproduct of our work is a study of the possibilities and limits of Kripke models whose domains are classical models of set theory: The common Kripke model constructions for intuitionistic or constructive set theories, such as $\CZF$ or $\IZF$, that are stronger than $\IKP$, usually involve complex recursive constructions (see, for example, \cite{Lubarsky2018b}). We will expose a failure of the exponentiation axiom showing that these more complex constructions are necessary to obtain models of many stronger theories (see \Cref{Subsection: A failure of exponentiation}).

This article is organised as follows. In \Cref{Section: Logics and Their Kripke Semantics}, we will lay out the necessary preliminaries concerning Kripke semantics for propositional and first-order logics. \Cref{Section: IKP and Its Kripke Semantics} provides an analysis of a certain Kripke model construction for $\IKP$. In \Cref{Section: The Logical Structure of IKP} we will analyse the logical structure of $\IKP$ and prove several \emph{De Jongh Theorems} for propositional, relative first-order and first-order logics. We close with some questions and directions for further research.

\section{Logics and Their Kripke Semantics}
\label{Section: Logics and Their Kripke Semantics}

As usual, we denote intuitionistic propositional logic by $\IPC$ and intuitionistic first-order logic by $\IQC$. The classical counterparts of these logics are called $\CPC$ and $\CQC$, respectively.  We will generally identify each logic with the set of its consequences. A logic $J$ is called intermediate if $\IPC \subseteq J \subseteq \CPC$ in case $J$ is propositional logic, or $\IQC \subseteq J \subseteq \CQC$ in case $J$ is a first-order logic. We assume intuitionistic first-order logic $\IQC$ to be formulated in a language without equality. Intuitionistic first-order logic \emph{with equality} will be denoted by $\IQC^=$.

A \emph{Kripke frame} $(K,\leq)$ is a set $K$ equipped with a partial order $\leq$. A \emph{Kripke model for $\IPC$} is a triple $(K,\leq,V)$ such that $(K,\leq)$ is a Kripke frame and $V: \Prop \to \P(K)$ a valuation that is persistent, i.e., if $w \in V(p)$ and $w \leq v$, then $v \in V(p)$. We can then define, by induction on propositional formulas, the forcing relation for propositional logic at a node $v \in K$ in the following way for a Kripke model $M$ for $\IPC$:

\begin{enumerate}[label=(\arabic*), series=ForcingDef]
    \item $M,v \Vdash p$ if and only if $v \in V(p)$,
    \item $M,v \Vdash \phi \wedge \psi$ if and only if $K,V,v \Vdash \phi \text{ and } K,V,v \Vdash \psi$,
    \item $M,v \Vdash \phi \vee \psi$ if and only if $K,V,v \Vdash \phi \text{ or } K,V,v \Vdash \psi$,
    \item $M,v \Vdash \phi \rightarrow \psi$ if and only if for all $w \geq v$, $K,V,w \Vdash \phi$ implies $K,V,w \Vdash \psi$,
    \item $M,v \Vdash \bot$ holds never.
\end{enumerate}
    
We write $v \Vdash \phi$ instead of $K, V, v \Vdash \phi$ if the Kripke frame and the valuation are clear from the context. We will write $K, V \Vdash \phi$ if $K,V,v \Vdash \phi$ holds for all $v \in K$. A formula $\phi$ is \emph{valid in $K$} if $K, V, v \Vdash \phi$ holds for all valuations $V$ on $K$ and $v \in K$, and $\phi$ is \emph{valid} if it is valid in every Kripke frame $K$.
    
We can now define the propositional logic of a Kripke frame and of a class of Kripke frames.

\begin{definition}
    If $(K,\leq)$ is a Kripke frame, we define the \emph{propositional logic $\Logic(K,\leq)$} to be the set of all propositional formulas that are valid in $K$. For a class $\mathcal{K}$ of Kripke frames, we define the \emph{propositional logic $\Logic(\mathcal{K})$} to be the set of all propositional formulas that are valid in all Kripke frames $(K,\leq)$ in $\mathcal{K}$. Given an intermediate propositional logic $\mathbf{J}$, we say that \emph{$\mathcal{K}$ characterises $\mathbf{J}$} if $\Logic(\mathcal{K}) = \mathbf{J}$.
\end{definition}

A \emph{Kripke model for $\IQC$} is a triple $(K,\leq,D,V)$ where $(K,\leq)$ is a Kripke frame, $D_v$ a set for each $v \in K$ such that $D_v \subseteq D_w$ for $v \leq w$, and $V$ a function such that:
    \begin{enumerate}
        \item if $p$ is a propositional letter, then $V(p) \subseteq K$ such that if $v \in V(p)$ and $v \leq w$, then $w \in V(p)$,
        \item if $R$ is an $n$-ary relation symbol of the language of $\IQC$, then $V(R) = \Set{R_v}{v \in K}$ such that $R_v \subseteq D_v^n$ and $R_v \subseteq R_w$ for $v \leq w$, and, 
        \item if $f$ is an $n$-ary function symbol of the language of $\IQC$, then $V(f) = \Set{f_v}{v \in K}$ such that $f_v$ is a function $D_v^n \to D_v$ such that $\Graph(f_v) \subseteq \Graph(f_w)$ for $v \leq w$.
    \end{enumerate}
    
We now extend the conditions of the forcing relation for $\IPC$ to Kripke models $M$ for $\IQC$ in the following way:
\begin{enumerate}[resume*=ForcingDef]
    \item $M,v \Vdash R(x_0,\dots,x_{n-1})$ if and only if $(x_0,\dots,x_{n-1}) \in R_v$,
    \item $M,v \Vdash f(x_0,\dots,x_{n-1}) = y$ if and only if $f_v(x_0,\dots,x_{n-1}) = y$,
    \item $M,v \Vdash \exists x \ \phi(x)$ if and only if there is some $x \in D_v$ such that $K,V,v \Vdash \phi(x)$, and,
    \item $M,v \Vdash \forall x \ \phi(x)$ if and only if for all $w \geq v$ and $x \in D_w$ it holds that $K,V,w \Vdash \phi(x)$.
\end{enumerate}

We can further extend these definitions to Kripke models for $\IQC^=$ by interpreting equality as a congruence relation $\sim_v$ at every node $v \in K$, and stipulate that:
\begin{enumerate}[resume*=ForcingDef]
    \item $M,v \Vdash x = y$ if and only if $x \sim_v y$.
\end{enumerate}

We define the validity of formulas in frames and classes of frames just as in the case of propositional logic. Now, we can define the first-order logic of a Kripke frame and of a class of Kripke frames.

\begin{definition}
    If $(K,\leq)$ is a Kripke frame, then the \emph{first-order logic $\QLogic(K,\leq)$} is defined to be the set of all first-order formulas that are valid in $K$. For a class $\mathcal{K}$ of Kripke frames, we define the \emph{first-order logic $\QLogic(\mathcal{K})$} to be the set of all first-order formulas that are valid in all Kripke frames $(K,\leq)$ in $\mathcal{K}$. Given an intermediate first-order logic $\mathbf{J}$, we say that \emph{$\mathcal{K}$ characterises $\mathbf{J}$} if $\QLogic(\mathcal{K}) = \mathbf{J}$. 
    
    Similarly, we define $\QLogic^=(\mathcal(K,\leq))$ and $\QLogic^=(\mathcal{K})$ as the set of all first-order formulas in the language of equality that are valid in the respective frame or class of frames.
\end{definition}

We will sometimes write $\Logic(K)$ for $\Logic(K,\leq)$, $\QLogic(K)$ for $\QLogic(K,\leq)$, and $\QLogic^=(K)$ for $\QLogic^=(K,\leq)$. The next result is proved by induction on the complexity of formulas; it shows that persistence of the propositional variables transfers to all formulas.

\begin{proposition}
    Let $M$ be a Kripke model for $\IPC$, $\IQC$ or $\IQC^=$, $v \in K$ and $\phi$ be a propositional formula such that $M,v \Vdash \phi$ holds. Then $M, w \Vdash \phi$ holds for all $w \geq v$. \qed
\end{proposition}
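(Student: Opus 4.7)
The plan is to proceed by a straightforward induction on the complexity of $\phi$, handling each of the clauses (1)--(9) in the definition of the forcing relation in turn. The base cases reduce directly to the persistence conditions built into the definition of the model, and the inductive steps either pass through transparently or use transitivity of $\leq$.

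For the atomic cases: if $\phi$ is a propositional letter $p$, then $v \in V(p)$ together with $v \leq w$ gives $w \in V(p)$ by the persistence requirement on the valuation. If $\phi$ is $R(x_0,\dots,x_{n-1})$, persistence follows from $R_v \subseteq R_w$; if $\phi$ is $f(\vec x) = y$, from $\Graph(f_v) \subseteq \Graph(f_w)$; if $\phi$ is $\bot$, the claim is vacuous. For the equality case $x = y$ in an $\IQC^=$-model, one needs persistence of the congruences $\sim_v$ (i.e.\ $\sim_v \subseteq \sim_w$ for $v \leq w$), which I would either take as tacitly included among the model conditions or state explicitly as an additional clause — this is the one spot where the excerpt leaves a small gap to be patched.

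The inductive cases for $\wedge$ and $\vee$ are immediate from the induction hypothesis applied to the (one or two) subformulas. For $\phi \to \psi$, suppose $M,v \Vdash \phi \to \psi$ and take $w \geq v$; to check $M,w \Vdash \phi \to \psi$, let $u \geq w$ with $M,u \Vdash \phi$, and observe that $u \geq v$ by transitivity, whence $M,u \Vdash \psi$ by the assumption at $v$. The quantifier cases use both transitivity of $\leq$ and the monotonicity $D_v \subseteq D_w$: for $\exists x\, \phi(x)$, the witness $x \in D_v$ at $v$ remains an element of $D_w$, and the induction hypothesis lifts $M,v \Vdash \phi(x)$ to $M,w \Vdash \phi(x)$; for $\forall x\, \phi(x)$ the argument is exactly analogous to the implication case.

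There is no genuine obstacle here — this is the standard persistence/monotonicity lemma and its proof is mechanical once the clauses (1)--(9) are in place. The only thing worth being careful about is the reading of the phrase \emph{``propositional formula''} in the statement, which in context must mean an arbitrary formula in whichever language the model is for; otherwise the $\IQC$ and $\IQC^=$ cases would be empty of content beyond the $\IPC$ clauses. I would therefore treat the induction as running over all formulas of the ambient language, and keep the write-up to a brief remark that each clause of the forcing definition is explicitly designed so that the induction step goes through.
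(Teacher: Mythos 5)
Your proof is correct and is exactly the standard induction on the complexity of formulas that the paper invokes (and omits as routine); your side remarks about persistence of the congruences $\sim_v$ and about reading ``propositional formula'' as ``formula of the ambient language'' are both apt editorial observations rather than gaps in the argument.
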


\begin{theorem}
    \label{Theorem:IQC true in Kripke models}
    A propositional formula $\phi$ is derivable in $\IPC$ if and only if it is valid in all Kripke models for $\IPC$. In particular, a propositional formula $\phi$ is derivable in $\IPC$ if and only if it is valid in all finite Kripke models for $\IPC$. A formula $\phi$ of first-order logic is derivable in $\IQC$ if and only if it is valid in all Kripke models for $\IQC$. Finally, a formula $\phi$ of first-order logic with equality is derivable in $\IQC^=$ if and only if it is valid in all Kripke models for $\IQC^=$.
\end{theorem}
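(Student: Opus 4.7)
The plan is to prove the two directions of each biconditional separately: soundness by induction on proofs, and completeness by a canonical-model construction (Henkin-style in the first-order case), since this is the textbook pattern for Kripke semantics. The finite model property for $\IPC$ then needs an additional filtration step.

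For soundness in each of the three systems, I would fix a Hilbert-style axiomatisation of $\IPC$, $\IQC$, or $\IQC^=$ and show by induction on the length of a derivation that if $\vdash \phi$ then $M,v \Vdash \phi$ for every Kripke model $M$ and every node $v$. All axioms of intuitionistic logic become routine verifications from the forcing clauses; the only real thing to check is that the two persistence-sensitive clauses ($\to$ and $\forall$, which are defined by quantifying over $w \geq v$) interact correctly with modus ponens and generalisation. The preceding persistence proposition ensures that once $w \Vdash \phi \to \psi$ and $w \Vdash \phi$, one actually has $w \Vdash \psi$ at the same node. For $\IQC^=$ one additionally checks that the equality axioms hold because $\sim_v$ was stipulated to be a congruence at each node, and that persistence $\sim_v \subseteq \sim_w$ (which the definition of a model for $\IQC^=$ requires) validates substitution along $\leq$.

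For completeness of $\IPC$, I would build the canonical Kripke frame whose nodes are \emph{prime} $\IPC$-theories, ordered by inclusion, with the valuation $V(p) = \{\Gamma \mid p \in \Gamma\}$. The central lemma is the usual Extension Lemma: if $\Gamma \not\vdash_{\IPC} \psi$, there is a prime theory $\Gamma' \supseteq \Gamma$ with $\Gamma' \not\vdash \psi$, proved by a Lindenbaum-style enumeration of disjunctions. From this the Truth Lemma $\Gamma \Vdash \phi \iff \phi \in \Gamma$ follows by induction on $\phi$, where the $\to$-case uses the Extension Lemma to build a witnessing successor. Hence any $\phi \notin \IPC$ is refuted at the node obtained by applying the Extension Lemma to $\emptyset \not\vdash \phi$. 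For the finite model property, I would filtrate the canonical model through the finite set of subformulas of $\phi$: quotient nodes by the equivalence of forcing the same $\phi$-subformulas, inheriting the order, and verify by a standard filtration argument that forcing is preserved on the quotient; since there are only finitely many classes, the resulting frame is finite.

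For $\IQC$, I would use a Henkin enrichment: take nodes to be pairs $(\Gamma, C)$ consisting of a prime theory $\Gamma$ in a language expanded by a set of constants $C$ such that $\Gamma$ has the existence property (every $\exists x\, \phi(x) \in \Gamma$ has a witness in $C$), ordered by simultaneous extension of both the theory and the constant set. The domain at a node is $C$ (or its equality-classes for $\IQC^=$). The Truth Lemma again goes through by induction, with the Extension Lemma now strengthened to supply fresh witnesses for $\exists$ and fresh failures in successor nodes for $\forall$; this successor step is the main technical obstacle, because a formula $\forall x\, \phi(x) \notin \Gamma$ must be falsified by producing a successor prime theory containing a fresh constant $c$ with $\phi(c)$ in its complement, which requires carefully alternating the Henkin enrichment with prime extension in a language that grows along $\leq$. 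For $\IQC^=$ one additionally quotients each domain $C$ by the $\Gamma$-provable equality relation, and checks that $\sim_v$ is indeed a congruence that is preserved along $\leq$ thanks to the equality axioms being present in each $\Gamma$.
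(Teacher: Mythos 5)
Your outline is correct and is essentially the standard argument (soundness by induction on derivations, completeness via prime/saturated theories with a Henkin enrichment for the first-order cases, and filtration for the finite model property of $\IPC$). The paper does not prove this theorem itself but defers to the literature (Troelstra--van Dalen, Chapter~2), where the proof follows the same canonical-model route you describe.
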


A detailed proof of \Cref{Theorem:IQC true in Kripke models} can be found in the literature (e.g., \cite[Chapter 2, Theorem 6.6]{TroelstraVanDalen}). We use this opportunity to emphasise that we are using $\IQC$ to denote first-order intuitionistic logic in a language without equality, and $\IQC^=$ to denote first-order intuitionistic logic with equality (see \cite[Chapter 2]{TroelstraVanDalen} for a discussion of various versions of intuitionistic first-order logic \emph{with} and \emph{without} equality).

We will later need the following result on Kripke frames for $\IQC$ (\emph{without} equality). 

\begin{definition} 
    We say that a Kripke model $M = (K,\leq,D,V)$ is \emph{countable} if $K$ is countable and $D_v$ is countable for every $v \in K$.  A Kripke model $M = (K,\leq,D,V)$ has \emph{countably increasing domains} if for every $v, w \in K$ such that $v < w$, we have that $D_w \setminus D_v$ is a countably infinite set.
\end{definition}

\begin{lemma}
    \label{Lemma: Kripke Model Extension of Domains}
    Let $M = (K,\leq,D,V)$ be a countable Kripke model for intuitionistic first-order logic. Then there is a model $M' = (K,\leq,D',V')$ with countably increasing domains and a family of maps $f_v: D_v \to D'_v$ such that $M, v \Vdash \phi(\bar x)$ if and only if $M', v \Vdash \phi(f_v(\bar x))$ holds for every $v \in K$. Further, if $M$ is countable, then so is $M'$.
\end{lemma}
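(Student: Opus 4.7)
The strategy is to augment each $D_v$ by a countably infinite set of fresh ``clones,'' each of which behaves exactly like a single distinguished element of $D_v$. The absence of equality in $\IQC$ is what makes this benign: distinct clones can be stipulated to satisfy the same atomic formulas without generating contradictions.

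Assume (standard for $\IQC$) that every $D_v$ is non-empty, and fix $a_v \in D_v$ for each $v \in K$. Choose countably infinite sets $N_v$ for each $v \in K$, pairwise disjoint and disjoint from $\bigcup_u D_u$. Set
\[ D'_v := D_v \cup \bigcup_{u \leq v} N_u, \]
and let $f_v : D_v \to D'_v$ be the inclusion. Each $D'_v$ is a countable union of countable sets, hence countable; and for $v < w$ we have $N_w \subseteq D'_w \setminus D'_v$, so the new model has countably increasing domains. Define projections $\pi_v : D'_v \to D_v$ by $\pi_v(x) = x$ on $D_v$ and $\pi_v(x) = a_u$ for $x \in N_u$. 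These are coherent in the sense that $\pi_w$ restricted to $D'_v$ equals $\pi_v$ whenever $v \leq w$.

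Interpret the symbols in $M'$ via the projection: set $R'_v(\bar y) :\Leftrightarrow R_v(\pi_v(\bar y))$ and $f'_v(\bar y) := f_v(\pi_v(\bar y))$, and let $V'$ agree with $V$ on propositional letters. Persistence of $R'$ and the graph-inclusion condition for $f'$ are immediate from the corresponding properties in $M$ together with coherence of $\pi$. The main content is then a simultaneous induction on terms and formulas establishing that for every $v \in K$, every $\IQC$-formula $\phi(\bar z)$ and every $\bar y \in D'_v$,
\[ M', v \Vdash \phi(\bar y) \iff M, v \Vdash \phi(\pi_v(\bar y)). \]
The term-level version, $\pi_v(t^{M'}[\bar y]) = t^{M}[\pi_v(\bar y)]$, handles atomic formulas at once, and propositional connectives and implication reduce to the inductive hypothesis using coherence of $\pi$ along the order.

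The main obstacle is the universal step. Given $w \geq v$ and $y \in D'_w$, the backward direction uses $\pi_w(y) \in D_w$ to provide a genuine original element witnessing the formula there; the forward direction uses that $D_w \hookrightarrow D'_w$ so any witness in $D_w$ already lives in $D'_w$. It is precisely here that the equality-free setting matters: different clones in $N_u$ project to the same $a_u$, which is harmless for $\IQC$ but would spuriously equate distinct elements in $\IQC^=$. The lemma then follows by specialising to $\bar y = \bar x \in D_v$, where $\pi_v(\bar x) = \bar x$. Countability of $M'$ is immediate from countability of $K$ and of each $D'_v$.
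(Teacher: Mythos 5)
Your proposal is correct and follows essentially the same route as the paper: the paper also adjoins, for each node, a fresh countably infinite set of elements to all domains above that node, clones the valuation of a chosen existing element onto them, and concludes by an induction that relies on the absence of equality in $\IQC$ (the only cosmetic difference being that the paper organises this as an $\omega$-stage iteration over an enumeration of the nodes rather than a single simultaneous definition).
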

\begin{proof}
    As $M$ is countable, the Kripke frame $(K,\leq)$ will be countable. So let $\Seq{v_i}{i < \omega}$ be a bijective enumeration of all nodes of $K$. Let $M_0 = M$. Given $M_n = (K,\leq,D^n,V^n)$, define $M_{n+1}$ as follows: Take a countable set $X_n$ such that $X_n \cap \bigcup_{v \in K}{D^n_v} = \emptyset$. Now let $D^n_w = D^n_w$ if $w \not \geq v_n$, and $D^n_w = D^n_w \cup X_n$ if $w \geq v_n$. Extend the valuation $V^n$ of $M_n$ to the extended domains as follows: Pick an arbitrary element $y_n \in D^n_{v_n}$ and copy the valuation of $y_n$ for every $x \in X_n$ at every $w \geq v_n$ (i.e. such that $v \Vdash P(x,\bar z)$ if and only if $v \Vdash P(y_n,\bar z)$).
    
    Finally, take $M' = (K, \leq, D', V')$ where $D'_v = \bigcup_{n < \omega} D^n_v$ and $V'_v = \bigcup_{n < \omega} V^n_v$. Clearly $M'$ is countable. Further define $f: \bigcup_{v \in K} D'_v \to \bigcup_{v \in K} D_v$ by stipulating that $f(x) = x$ if $x \in D_v$, and $f(x) = y_n$ if $x \in X_n$. An easy induction now shows that the desired statement holds (note that the language of $\IQC$ does not contain equality). 
\end{proof}

\section{IKP and Its Kripke Semantics}
\label{Section: IKP and Its Kripke Semantics}

In this section, we will introduce intuitionistic Kripke-Platek set theory, and Kripke semantics for this theory.

\subsection{Intuitionistic Kripke-Platek Set Theory}

We will list the relevant axioms and axiom schemes. The language $\mathcal{L}_\in$ of set theory extends the logical language with binary relation symbols $\in$ and $=$ denoting set membership and equality, respectively. As usual, the bounded quantifiers $\forall x \in a \ \phi(x)$ and $\exists x \in a \ \phi(x)$ are abbreviations for $\forall x (x \in a \rightarrow \phi(x))$ and $\exists x (x \in a \wedge \phi(x))$, respectively.
    \begin{align*} \allowdisplaybreaks
        & \exists a \ \forall x \in a \ \bot \tag{Empty Set} \\
        & \forall a \forall b \exists y \forall x (x \in y \leftrightarrow (x = a \vee x = b)) \tag{Pairing} \\
        & \forall a \exists y \forall x (x \in y \leftrightarrow \exists u (u \in a \wedge x \in u)) \tag{Union} \\
        & \forall a \forall b (\forall x(x \in a \leftrightarrow x \in b) \rightarrow a = b) \tag{Extensionality} \\
        & \exists x (\emptyset \in x \wedge (\forall y \ y \in x \rightarrow y \cup \set{y} \in x) \wedge \tag{Infinity} \\ & (\forall y \ y \in x \rightarrow (y = \emptyset \vee \exists z \in y \ y = z \cup \set{z}))) \\
        & (\forall a (\forall x \in a \ \phi(x) \rightarrow \phi(a))) \rightarrow \forall a \phi(a)  \tag{Set Induction}
    \end{align*}
Moreover, we have the axiom schemes of $\Delta_0$-separation and $\Delta_0$-collection, where $\phi$ ranges over the bounded formulas:
    \begin{align*}
        & \forall a \exists y \forall x (x \in y \leftrightarrow x \in a \wedge \phi(x)) \hspace{1em}  \text{($\phi$ is a $\Delta_0$-formula)} \tag{$\Delta_0$-Separation} \\
        & \forall a( \forall x\in a \exists y \phi(x,y) \rightarrow \exists b\forall x\in a \exists y\in b \phi(x,y) )
        \hspace{1em}  \text{($\phi$ is a $\Delta_0$-formula)} \tag{$\Delta_0$-Collection}
    \end{align*}
Sometimes, these schemes are also referred to as \emph{bounded separation} and \emph{bounded collection}, respectively. Removing the restriction to $\Delta_0$-formulas, we obtain the usual schemes of \emph{separation} and \emph{collection}.
    
\begin{definition}
    The theory $\IKP$ of \emph{intuitionistic Kripke-Platek set theory $\IKP$} consists of the axioms and rules of intuitionistic first-order logic for the language $\mathcal{L}_\in$ extended by the axioms and axiom schemes of empty set, pairing, union, extensionality, infinity, set induction, $\Delta_0$-separation, and $\Delta_0$-collection.    
\end{definition}

$\IKP$ was first introduced and studied by Lubarsky \cite{Lubarsky2002}. Denote by $\IKP^+$ the theory obtained by adding the schemes of bounded strong collection and set-bounded subset collection to $\IKP$.

For reference, we also introduce the well-known theories of $\CZF$ and $\IZF$.
In the following strong infinity axiom, $\Ind(a)$ is the formula denoting that $a$ is an inductive set: $\Ind(a)$ abbreviates $\emptyset \in a \wedge \forall x \in a \exists y \in a \ y = \set{x}$.
    \begin{align*}
        & \exists a (\Ind(a) \wedge \forall b (\Ind(b) \rightarrow \forall x \in a (x \in b))) \tag{Strong Infinity}
    \end{align*}    
Finally, we have the schemes of strong collection and subset collection for all formulas $\phi(x,y)$ and $\psi(x,y,u)$, respectively.
    \begin{align*}
        & \forall a (\forall x \in a \exists y \ \phi(x,y) \rightarrow \tag{$\text{Strong Collection}$} \\
        & \hspace{2em} \exists b (\forall x \in a \exists y \in b \ \phi(x,y) \wedge \forall y \in b \exists x \in a \ \phi(x,y)) )  \\ 
        & \forall a \forall b \exists c \forall u (\forall x \in a \exists y \in b \ \psi(x,y,u) \rightarrow  \tag{$\text{Subset Collection}$} \\
        & \hspace{2em} \exists d \in c(\forall x \in a \exists y \in d \ \psi(x,y,u) \wedge \forall y \in d \exists x \in a \ \psi(x,y,u)))
    \end{align*}
    
The axiom scheme obtained from strong collection when restricting $\phi$ to range over $\Delta_0$-formulas only will be called  \emph{Bounded Strong Collection}. Similarly, we obtain the axiom scheme of \emph{Set-bounded Subset Collection} from the axiom scheme of subset collection when restricting $\psi$ to $\Delta_0$-formulas such that $z$ is set-bounded in $\psi$ (i.e., it is possible to intuitionistically derive $z \in t$ for some term $t$ that appears in $\psi$ from $\psi(x,y,z)$). 

We also need the power set axiom. 
\begin{align*}
    & \forall a \exists y \forall z (z \in y \leftrightarrow z \subseteq a)
        \tag{Power Set}
\end{align*} 
    
\begin{definition} \label{Definition:CZF}
    The theory $\CZF$ of \emph{constructive Zermelo-Fraenkel set theory} consists of the axioms and rules of intuitionistic first-order logic for the language $\mathcal{L}_\in$ extended by the axioms of extensionality, empty set, pairing, union and strong infinity as well as the axiom schemes of set induction, bounded separation, strong collection and subset collection.
\end{definition}

In the statement of the following axiom of exponentiation, $f:x \to y$ is an abbreviation for the $\Delta_0$-formula $\phi(f,x,y)$ stating that $f$ is a function from $x$ to $y$. 
\begin{equation*}
    \forall x \ \forall y \ \exists z \ \forall f (f \in z \ \leftrightarrow \ f: x \to y)
    \tag{Exponentiation, $\mathsf{Exp}$}
\end{equation*}
The axiom of exponentiation is a constructive consequence of the axiom of subset collection over $\CZF$ (cf. \cite[Theorem 5.1.2]{AczelRathjen2010}). Hence, a failure of exponentiation implies a failure of subset collection. We will see in \Cref{Subsection: A failure of exponentiation} that the Kripke models with classical domains do not satisfy the axiom of exponentiation in general, and therefore, they cannot satisfy full $\CZF$.

\begin{definition}
    The theory $\IZF$ of \emph{intuitionistic Zermelo-Fraenkel set theory} consists of the axioms and rules of intuitionistic first-order logic for the language $\mathcal{L}_\in$ extended by the axioms and axiom schemes of extensionality, pairing, union, empty set, strong infinity, separation, collection, set induction, and powerset. 
\end{definition}

\subsection{Kripke Models for the Language of Set Theory}

By extending the Kripke models introduced above, we can obtain models for intuitionistic first-order logic. Instead of developing this theory in full generality, we will focus on the subcase of \emph{Kripke models for set theory}.

\begin{definition}
    A \emph{Kripke model $(K,\leq,D,e)$ for set theory} is a Kripke frame $(K,\leq)$ for $\IPC$ with a collection of domains $D = \Set{D_v}{v \in K}$ and a collection of set-membership relations $e = \Set{e_v}{v \in K}$, such that the following hold:
    \begin{enumerate}
        \item $e_v$ is a binary relation on $D_v$ for every $v \in K$, and,
        \item $D_v \subseteq D_w$ and $e_v \subseteq e_w$ for all $w \geq v \in K$.
    \end{enumerate}
\end{definition}

Examples of Kripke models for set theory are not only the Kripke models with classical domains that we will introduce in \Cref{Section: Kripke Models with Classical Domains}, but also the Kripke models introduced by Lubarsky \cite{Lubarsky2005,Lubarsky2018a}, by Diener and Lubarsky \cite{LubarskyDiener2014}  and by Lubarsky and Rathjen \cite{LubarskyRathjen2008}; recently Passmann \cite{Passmann2020} introduced the so-called \emph{blended Kripke models for set theory} to prove de Jongh's theorem for $\IZF$ and $\CZF$.

We can now extend the forcing relation to Kripke models for set theory to interpret the language of set theory $\mathcal{L}_\in$. For the following definition, we tacitly enrich the language of set theory with constant symbols for every element of the domains of the Kripke model at hand.

\begin{definition}
    Let $(K,\leq,D,e)$ be a Kripke model for set theory. We define, by induction on $\mathcal{L}_\in$-formulas, the forcing relation at every node of a Kripke frame in the following way, where $\phi$ and $\psi$ are formulas with all free variables shown, and $\bar y = y_0,\dots,y_{n-1}$ are elements of $D_v$ for the node $v$ considered on the left side: 
    \begin{enumerate}
        \item $(K,\leq,D,e),v \Vdash a \in b$ if and only if $(a,b) \in e_v$,
        \item $(K,\leq,D,e),v \Vdash a = b$ if and only if $a = b$,
        \item $(K,\leq,D,e),v \Vdash \exists x \phi(x,\bar y)$ if and only if there is some $a \in D_v$ \\ with $(K,\leq,D,e),v \Vdash \phi(a, \bar y)$,
        \item $(K,\leq,D,e),v \Vdash \forall x \phi(x,\bar y)$ if and only if for all $w \geq v$ and $a \in D_w$ \\ we have $(K,\leq,D,e),w \Vdash \phi(a,\bar y)$.
    \end{enumerate}
    The cases for $\rightarrow$, $\wedge$, $\vee$ and $\bot$ are analogous to the ones in the above definition of the forcing relation for Kripke models for $\IPC$. We will write $v \Vdash \phi$ (or $K, v \Vdash \phi$) instead of $(K,\leq,D,e), v \Vdash \phi$ if the Kripke model is clear from the context. An $\mathcal{L}_\in$-formula $\phi$ is \emph{valid in $K$} if $v \Vdash \phi$ holds for all $v \in K$, and $\phi$ is \emph{valid} if it is valid in every Kripke frame $K$. Finally, we will call $(K,\leq)$ the \emph{underlying Kripke frame} of $(K,\leq,D,e)$.
\end{definition}

Persistence also holds in Kripke models for set theory.

\begin{proposition}
    Let $(K,\leq,V)$ be a Kripke model for set theory, $v \in K$ and $\phi$ be a formula in the language of set theory such that $K,v \Vdash \phi$ holds. Then $K, w \Vdash \phi$ holds for all $w \geq v$. \qed
\end{proposition}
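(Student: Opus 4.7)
The plan is to prove persistence by induction on the complexity of the $\mathcal{L}_\in$-formula $\phi$, exactly mirroring the standard argument for $\IPC$ and $\IQC$ but now using the monotonicity conditions $D_v \subseteq D_w$ and $e_v \subseteq e_w$ built into the definition of a Kripke model for set theory. Note that every parameter appearing in $\phi$ is an element of $D_v$ and hence, by $D_v \subseteq D_w$, lies in $D_w$, so the forcing statement at $w$ is well-formed.

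For the atomic cases: if $v \Vdash a \in b$, then $(a,b) \in e_v$, and by $e_v \subseteq e_w$ we have $(a,b) \in e_w$, whence $w \Vdash a \in b$; the equality clause $v \Vdash a = b$ reduces to the meta-theoretic identity $a = b$, which transfers to $w$ trivially. The clause for $\bot$ is vacuously persistent. The cases for $\wedge$ and $\vee$ are immediate from the induction hypothesis applied to the conjuncts or disjuncts.

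The cases for $\rightarrow$ and $\forall$ do not even require the induction hypothesis: their forcing clauses already quantify over all $u \geq v$, and since $w \geq v$ implies $\Set{u}{u \geq w} \subseteq \Set{u}{u \geq v}$, any statement of that shape at $v$ restricts directly to one at $w$. For $\exists$: if $v \Vdash \exists x \, \phi(x,\bar y)$, pick a witness $a \in D_v$ with $v \Vdash \phi(a, \bar y)$; by induction $w \Vdash \phi(a, \bar y)$, and since $a \in D_v \subseteq D_w$ this witness remains available at $w$, so $w \Vdash \exists x \, \phi(x,\bar y)$.

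There is no genuine obstacle here: the proposition is a bookkeeping induction whose only content is the observation that the two defining inclusions of a Kripke model for set theory (monotonicity of $D$ and of $e$) are exactly the conditions needed to absorb the atomic cases, while the definitions of $\rightarrow$ and $\forall$ take care of themselves by being phrased in terms of future nodes.
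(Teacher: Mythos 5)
Your proof is correct and is exactly the standard induction on formula complexity that the paper has in mind (the paper omits the proof as routine, having already noted that the analogous result for $\IPC$/$\IQC$ is "proved by induction on the complexity of formulas"). All cases are handled properly, including the observation that the $\rightarrow$ and $\forall$ clauses persist by transitivity of $\leq$ alone.
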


\begin{remark}
    We have now introduced four kinds of Kripke models: for $\IPC$, for $\IQC$, for $\IQC^=$, and for set theory. The reader might have noticed that Kripke models for set theory are just a special instance of the Kripke models for $\IQC^=$ where equality is interpreted as actual equality on the domains. Kripke models for $\IQC^=$ do in general not interpret equality this way and only require an equivalence relation, and Kripke models for $\IQC$ do not have equality at all. Using this distinction, we are making explicit when we talk about Kripke models for certain \emph{logics} and when we are talking about Kripke models for certain \emph{set theories}.
\end{remark}

\subsection{Kripke Models with Classical Domains}
\label{Section: Kripke Models with Classical Domains}

The idea is to obtain models of set theory by assigning classical models of $\ZF$ set theory to every node of a Kripke frame. We will first introduce Kripke models with classical domains and explain some of their basic properties. Afterwards, we will indicate their limitations in modelling strong set theories by exhibiting a failure of the exponentiation axiom.

\subsubsection{Definitions and Basic Properties}

We will closely follow the presentation of Iemhoff \cite{Iemhoff2010} but give up on some generality that is not needed here. We will start by giving a condition for when an assignment of models to nodes is suitable for our purposes.

\begin{definition}
    Let $(K,\leq)$ be a Kripke frame. An assignment $\M: K \to V$ of transitive models of $\ZF$ set theory to nodes of $K$ is called \emph{sound for $K$} if for all nodes $v, w \in K$ with $v \leq w$ we have that $\M(v) \subseteq \M(w)$, $\M(v) \vDash x \in y$ implies $\M(w) \vDash x \in y$, and $\M(v) \vDash x = y$ implies $\M(w) \vDash x = y$.
\end{definition}

For convenience, we will write $\M_v$ for $\M(v)$. Of course, this could be readily generalised to homomorphisms of models of set theory that are not necessarily inclusions, but we will not need this level of generality here.

\begin{definition}
    Given a Kripke frame $(K,\leq)$ and a sound assignment $\M:K \to V$, we define the \emph{Kripke model with classical domains} $K(\M)$ to be the Kripke model for set theory $(K,\leq,\M,e)$ where $e_v = {\in} \upharpoonright (\M_v \times \M_v)$.
\end{definition}

Persistence for Kripke models with classical domains is a special case of persistence for Kripke models for set theory.

\begin{proposition}\label{Proposition:Iemhoff persistence}
    If $K(\M)$ is a Kripke model with classical domains with nodes $v, w \in K$ such that $v \leq w$, then for all formulas $\phi$, $K(\M), v \Vdash \phi$ implies $K(\M), w \Vdash \phi$. \qed
\end{proposition}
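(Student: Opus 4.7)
The plan is to derive this as an immediate corollary of the previously stated persistence proposition for Kripke models for set theory. By construction, $K(\M) = (K,\leq,\M,e)$ with $e_v = {\in}\upharpoonright (\M_v \times \M_v)$, so once I confirm that this datum fits the definition of a Kripke model for set theory, the statement reduces to the general persistence result already in hand.

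The verification consists of checking the two clauses in that definition. First, $e_v$ is a binary relation on $\M_v$ by its defining restriction. Second, I need $\M_v \subseteq \M_w$ and $e_v \subseteq e_w$ whenever $v \leq w$, and both are built into soundness of $\M$: the inclusion of domains is explicit, and for the inclusion of membership relations, if $(x,y) \in e_v$ then $x,y \in \M_v \subseteq \M_w$ and $\M_v \vDash x \in y$, so by the third clause of soundness $\M_w \vDash x \in y$, i.e.\ $(x,y) \in e_w$.

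With those clauses verified, the earlier persistence proposition for Kripke models for set theory applies directly. If one preferred to spell it out, the direct argument is a straightforward induction on $\phi$: the atomic cases $a \in b$ and $a = b$ use $e_v \subseteq e_w$ and the fact that equality is interpreted as actual identity; $\wedge$, $\vee$, $\bot$ are immediate; and the cases $\rightarrow$, $\forall$, $\exists$ follow because the forcing clauses for the first two already quantify over all nodes $\geq v$, a superset of the nodes $\geq w$, while the clause for $\exists$ uses $D_v \subseteq D_w$. There is no genuine obstacle to surmount here — the only content is the observation that the soundness condition was formulated precisely to supply the closure data that the definition of a Kripke model for set theory demands, after which persistence is inherited for free.
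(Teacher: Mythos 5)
Your proposal is correct and matches the paper's approach exactly: the paper states immediately before this proposition that persistence for Kripke models with classical domains is a special case of persistence for Kripke models for set theory, which is precisely your reduction. Your verification that the soundness conditions supply the monotonicity data required by the definition of a Kripke model for set theory is the only content needed, and the optional direct induction you sketch is the standard argument.
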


We will now analyse the set theory satisfied by these models.

\begin{definition}
    We say that a set-theoretic formula $\phi(x_0,\dots,x_{n-1})$ is \emph{evaluated locally} if for all Kripke models with classical domains $K(\M)$, where $\M$ is a sound assignment, we have $K(\M), v \Vdash \phi(a_0,\dots,a_{n-1})$ if and only if $\M_v \vDash \phi(a_0,\dots,a_{n-1})$ for all $a_0,\dots,a_{n-1} \in \M_v$.
\end{definition}

\begin{proposition} \label{Proposition:Delta_0 locally}
    If $\phi$ is a $\Delta_0$-formula, then $\phi$ is evaluated locally.
\end{proposition}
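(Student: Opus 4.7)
The plan is to prove this by induction on the complexity of the $\Delta_0$-formula $\phi$. The main conceptual tool beyond the definitions is the standard fact that $\Delta_0$-formulas are absolute between transitive models of sufficient set theory: whenever $\M_v \subseteq \M_w$ are transitive, $\M_v \vDash \phi(\bar a)$ iff $\M_w \vDash \phi(\bar a)$ for any $\bar a \in \M_v$ and any $\Delta_0$-formula $\phi$. Combined with persistence (\Cref{Proposition:Iemhoff persistence}) and the definition of $K(\M)$, this will handle the non-trivial cases.

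For the atomic cases, $K(\M), v \Vdash a \in b$ iff $(a,b) \in e_v = {\in} \upharpoonright (\M_v \times \M_v)$ iff $\M_v \vDash a \in b$ (using transitivity for the second equivalence), and similarly for $a = b$ since the Kripke interpretation forces equality to be genuine equality on $\M_v$. The cases for $\wedge$, $\vee$, and $\bot$ are immediate from the corresponding clauses of the forcing definition together with the induction hypothesis.

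The implication case requires both persistence and $\Delta_0$-absoluteness. For the forward direction, assuming $v \Vdash \phi \to \psi$ and $\M_v \vDash \phi$, we apply the induction hypothesis to get $v \Vdash \phi$, whence $v \Vdash \psi$, and then IH again to conclude $\M_v \vDash \psi$. For the converse, assume $\M_v \vDash \phi \to \psi$ and let $w \geq v$ with $w \Vdash \phi$; by IH, $\M_w \vDash \phi$; since $\phi \to \psi$ is itself $\Delta_0$, absoluteness between the transitive models $\M_v \subseteq \M_w$ yields $\M_w \vDash \phi \to \psi$, hence $\M_w \vDash \psi$, and IH gives $w \Vdash \psi$.

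The bounded quantifier cases are where the care is needed. For $\exists x \in a \, \phi(x)$, I would unfold to $\exists x (x \in a \wedge \phi(x))$; witnesses at $v$ must lie in $\M_v$, and since $\M_v$ is transitive and $a \in \M_v$, the genuine members of $a$ are already elements of $\M_v$, so the Kripke existential collapses to the classical one via the induction hypothesis and the atomic case. For $\forall x \in a \, \phi(x)$, unfolding yields that the forcing holds iff for every $w \geq v$ and every $b \in \M_w$ with $b \in a$ (which happens iff $b$ is a real element of $a$, hence already in $\M_v$) one has $w \Vdash \phi(b)$; by IH this becomes $\M_w \vDash \phi(b)$, and $\Delta_0$-absoluteness between $\M_v$ and $\M_w$ shows this is equivalent to $\M_v \vDash \phi(b)$ for every $b \in a$, i.e., to $\M_v \vDash \forall x \in a \, \phi(x)$. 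The main obstacle in the write-up is bookkeeping: making sure one consistently distinguishes "real" membership from the forced relation $e_v$, and invoking both transitivity (to confine witnesses and bounded variables to $\M_v$) and $\Delta_0$-absoluteness (to transfer truth values between comparable nodes) at the right moments in the $\to$ and $\forall x \in a$ cases.
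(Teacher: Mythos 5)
Your proof is correct and follows essentially the same route as the paper: an induction on $\Delta_0$-formulas in which the only delicate cases are implication and the bounded universal quantifier, where truth must be transferred between a node $v$ and its successors $w \geq v$. The only difference is that the paper handles this transfer by strengthening the induction hypothesis to $\forall w \geq v\,(w \Vdash \phi(\bar a) \iff \M_v \vDash \phi(\bar a))$, whereas you keep the plain node-by-node hypothesis and instead import classical $\Delta_0$-absoluteness between the transitive models $\M_v \subseteq \M_w$; these devices are interchangeable here, since the strengthened hypothesis is exactly the conjunction of your hypothesis at $w$ with that absoluteness statement.
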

\begin{proof}
    This statement can be shown by actually proving a stronger statement by induction on $\Delta_0$-formulas, simultaneously for all $v \in K$. Namely, we can show that for all $w \geq v$ it holds that $w \Vdash \phi(a_0,\dots,a_n)$ if and only if $\M_v \vDash \phi(a_0,\dots,a_n)$. To prove the case of the bounded universal quantifier and the case of implication, we need that the quantifier is outside in the sense that our induction hypothesis will be:
    \begin{equation*}
        \forall w \geq v (w \Vdash \phi(a_0,\dots,a_n) \iff \M_v \vDash \phi(a_0,\dots,a_n)).
    \end{equation*}
    With this setup, the induction follows straightforwardly. 
\end{proof}

\begin{theorem}[Iemhoff, {\cite[Corollary 4]{Iemhoff2010}}] \label{Theorem:Iemhoff satisfies BCZF}
    Let $K(\M)$ be a Kripke model with classical domains. Then $K(\M) \Vdash \IKP^+$.
\end{theorem}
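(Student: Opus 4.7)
My approach is to verify the axioms of $\IKP^+$ at an arbitrary node $v$ of a Kripke model with classical domains $K(\M)$, leveraging \Cref{Proposition:Delta_0 locally} together with the fact that each $\M_v$ is a transitive model of $\ZF$. The governing idea is that every axiom of $\IKP^+$ other than set induction either has a $\Delta_0$ body or is of the shape ``for every parameter, there exists a witness satisfying a $\Delta_0$ body'', so the needed witnesses can simply be extracted from $\M_v$, and absoluteness of $\Delta_0$-formulas between the transitive models $\M_v \subseteq \M_w$ then propagates them to all later nodes.

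Concretely, for pairing, union, empty set, and infinity the plan is to take the corresponding set (the unordered pair, the union, $\emptyset$, $\omega$) from $\M_v$; locality together with $\Delta_0$-absoluteness shows that the defining body continues to hold at every $w \geq v$. For $\Delta_0$-separation the witness set $\{x \in a : \phi(x)\}$ exists in $\M_v$ by $\ZF$, and the membership biconditional is $\Delta_0$ hence absolute. For $\Delta_0$-collection and bounded strong collection, if $v \Vdash \forall x \in a\, \exists y\, \phi(x,y)$ with $\phi \in \Delta_0$, then locality yields $\M_v \vDash \forall x \in a\, \exists y\, \phi(x,y)$, and (strong) collection inside $\M_v$ produces a $b \in \M_v$ witnessing both conjuncts $\forall x \in a\,\exists y \in b\,\phi$ and $\forall y \in b\,\exists x \in a\,\phi$; since those are $\Delta_0$, they remain true in every larger $\M_w$, so $b$ still works at $w$. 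Set-bounded subset collection is handled analogously, the set-bound ensuring that all relevant values live inside $\M_v$ and hence the collecting set provided by $\ZF$ in $\M_v$ suffices. Extensionality requires only the observation that if $w \Vdash \forall x(x \in a \leftrightarrow x \in b)$ with $a,b \in \M_w$, then every actual element of $a$ or $b$ (which lies in $\M_w$ by transitivity) is common to both, so $a = b$ as sets and thus $w \Vdash a = b$.

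The main obstacle is set induction, since its instances involve arbitrary formulas $\phi$ and so cannot be reduced to $\Delta_0$-locality. Here the plan is an argument by well-founded induction on the classical set-theoretic rank. Assume towards a contradiction that $v \Vdash \forall a(\forall x \in a\, \phi(x) \to \phi(a))$ but $v \not\Vdash \forall a\, \phi(a)$, and let $S$ be the class of all $a$ for which there exists $w \geq v$ with $a \in \M_w$ and $w \not\Vdash \phi(a)$. By assumption $S$ is nonempty, so pick $a \in S$ of minimal rank and fix $w \geq v$ witnessing $a \in S$. For any $u \geq w$ and any $x \in a$ (which lies in $\M_u$ by transitivity of $\M_u$), the rank of $x$ is strictly less than that of $a$, so by minimality $u \Vdash \phi(x)$; this gives $w \Vdash \forall x \in a\, \phi(x)$. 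Applying the inductive hypothesis at $w$ then yields $w \Vdash \phi(a)$, contradicting the choice of $w$.
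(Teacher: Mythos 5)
Your proof is correct. The paper itself contains no proof of this theorem---it is imported as \cite[Corollary 4]{Iemhoff2010}---but your verification follows exactly the route that result rests on: \Cref{Proposition:Delta_0 locally} together with $\Delta_0$-absoluteness between the transitive models $\M_v \subseteq \M_w$ disposes of every axiom with essentially bounded content (with witnesses drawn from $\M_v$ and set-boundedness pinning the parameter $u$ of subset collection inside $\M_v$), and set induction is handled by an external induction on von Neumann rank over the class of failure points. The only spots where your sketch is loose are harmless: the bodies of pairing and union contain an unbounded $\forall x$, and $\forall x \in a\,\exists y\,\phi(x,y)$ is not itself $\Delta_0$, but in both cases transitivity of the $\M_w$ and the locality of the existential quantifier reduce matters to the $\Delta_0$ case exactly as you intend.
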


Recall that Markov's principle $\mathsf{MP}$ is formulated in the context of set theory as follows: $$\forall \alpha : \mathbb{N} \to 2 \ (\neg \forall n \in \mathbb{N} \ \alpha(n) = 0 \rightarrow \exists n \in \mathbb{N} \ \alpha(n) = 1)$$

\begin{proposition}
    Let $K(\M)$ be a Kripke model with classical domains. Then $K(\M) \Vdash \MP$.
\end{proposition}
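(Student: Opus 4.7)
The plan is to exploit two ingredients: each classical domain $\M_u$ satisfies classical logic, and by \Cref{Proposition:Delta_0 locally}, $\Delta_0$-formulas are evaluated locally in $K(\M)$. Fix an arbitrary node $v$; to show $v \Vdash \MP$, I take any $w \geq v$ and $\alpha \in \M_w$ with $w \Vdash \alpha : \mathbb{N} \to 2$, then any $w' \geq w$ with $w' \Vdash \neg \forall n \in \mathbb{N} \ \alpha(n) = 0$; the target is $w' \Vdash \exists n \in \mathbb{N} \ \alpha(n) = 1$.

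First I would verify that the three subformulas ``$\alpha : \mathbb{N} \to 2$'', ``$\forall n \in \mathbb{N} \ \alpha(n) = 0$'', and ``$\exists n \in \mathbb{N} \ \alpha(n) = 1$'' are all $\Delta_0$: their quantifiers are bounded by $\mathbb{N}$ (which is $\omega$, absolute between transitive models of $\ZF$) or by $\alpha$ itself, and the function-value equation $\alpha(n) = k$ reduces to the atomic condition $(n,k) \in \alpha$ together with a bounded uniqueness clause. By \Cref{Proposition:Delta_0 locally}, the forcing value of each of these formulas at a node $u$ coincides with their classical truth in $\M_u$.

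The heart of the argument is then the following short chain. From $w' \Vdash \neg \forall n \in \mathbb{N} \ \alpha(n) = 0$, instantiating the forcing clause for negation with $w'' := w'$ gives $w' \not\Vdash \forall n \in \mathbb{N} \ \alpha(n) = 0$, and local evaluation transfers this to $\M_{w'} \not\vDash \forall n \in \mathbb{N} \ \alpha(n) = 0$. Persistence (\Cref{Proposition:Iemhoff persistence}) together with locality also yields $\M_{w'} \vDash \alpha : \mathbb{N} \to 2$. Now the point is that $\M_{w'}$ is a classical model of $\ZF$, so classical reasoning inside $\M_{w'}$ produces some $n \in \omega^{\M_{w'}}$ with $\M_{w'} \vDash \alpha(n) = 1$; one final use of \Cref{Proposition:Delta_0 locally} then gives $w' \Vdash \exists n \in \mathbb{N} \ \alpha(n) = 1$.

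The only obstacle worth flagging is bookkeeping: $\MP$ as a whole is not $\Delta_0$ (it has an outer implication with a negated premise and an unbounded-looking existential), but its logically active kernel ``$\forall n \in \mathbb{N} \ \alpha(n) = 0$'' is, and the Kripke semantics of $\neg$, $\to$ and of the bounded $\exists$ reduce the whole validity check to a $\Delta_0$ question at the single node $w'$. Once this reduction is in place, the classicality of the domains does all the remaining work, which is exactly why these Kripke models over classical $\ZF$-models make Markov's principle cheap.
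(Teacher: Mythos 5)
Your proof is correct and takes essentially the same route as the paper: both reduce Markov's principle to the classical truth of its bounded content in the domains $\M_{w'}$ via \Cref{Proposition:Delta_0 locally}. The only difference is presentational: the paper notes that the entire kernel $\neg \forall n \in \mathbb{N} \ \alpha(n) = 0 \rightarrow \exists n \in \mathbb{N} \ \alpha(n) = 1$ is itself a $\Delta_0$-formula (the existential is bounded by $\mathbb{N}$, contrary to your parenthetical worry) and applies locality once, whereas you unfold the forcing clauses for $\neg$ and $\rightarrow$ by hand before invoking locality on the subformulas.
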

\begin{proof}
    Let $v \in K$ and $\alpha \in \M_v$ be given such that $v \Vdash \text{``$\alpha$ is a function $\alpha \to 2$''}$. By \Cref{Proposition:Delta_0 locally}, we know that $\alpha$ is such a function also in the classical model $\M_v$. Further observe that $\neg \forall n \in \mathbb{N} \ \alpha(n) = 0 \rightarrow \exists n \in \mathbb{N} \ \alpha(n) = 1$ is a $\Delta_0$-formula and therefore evaluated locally by \Cref{Proposition:Delta_0 locally}. Now this statement is clearly true of $\alpha$ because $\M_v$ is a classical model of $\ZF$. 
\end{proof}

Extended Church's Thesis $\mathsf{ECT}$ does not hold.\footnote{This follows because under $\mathsf{MP}$ and $\mathsf{ECT}$ all functions $f: \mathbb{R} \to \mathbb{R}$ are continuous (see \cite[Theorem 16.0.23]{AczelRathjen2010}) but that is in general not the case here.} Let us conclude this section with the following curious observation.

\begin{proposition}
    If $K(\M)$ is a Kripke model with classical domains such that every $\M_v$ is a model of the axiom of choice, then the axiom of choice holds in $K(\M)$.
\end{proposition}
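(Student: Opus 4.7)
The plan is to leverage \Cref{Proposition:Delta_0 locally} twice. The key observation is that the axiom of choice, once the outermost existential quantifier $\exists f$ is peeled off, is a $\Delta_0$-statement; hence both the hypothesis and (the matrix of) the conclusion of AC can be transferred between forcing at $v$ and classical truth in $\M_v$.

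Concretely, I would formulate AC as
\begin{equation*}
    \forall a \bigl( \forall y \in a \ (y \neq \emptyset) \rightarrow \exists f \, (\dom(f) = a \wedge \forall y \in a \ f(y) \in y) \bigr),
\end{equation*}
and then fix an arbitrary node $v \in K$ together with $a \in \M_v$ satisfying $v \Vdash \forall y \in a \ (y \neq \emptyset)$. Since this hypothesis is $\Delta_0$, \Cref{Proposition:Delta_0 locally} yields $\M_v \vDash \forall y \in a \ (y \neq \emptyset)$. By assumption, $\M_v$ is a classical model of $\ZF$ together with the axiom of choice, so there exists $f \in \M_v$ such that $\M_v \vDash \dom(f) = a \wedge \forall y \in a \ f(y) \in y$. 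The matrix is again a $\Delta_0$-formula, so applying \Cref{Proposition:Delta_0 locally} in the opposite direction gives $v \Vdash \dom(f) = a \wedge \forall y \in a \ f(y) \in y$, whence $v \Vdash \exists f (\dom(f) = a \wedge \forall y \in a \ f(y) \in y)$, as desired.

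The only subtlety I anticipate — and it is minor — is verifying that the quantifier-free part of the conclusion is genuinely $\Delta_0$: the predicates ``$f$ is a function with domain $a$'' and ``$f(y) \in y$'' must be unwound so that all quantifiers are bounded (by $a$, $\bigcup f$, or $\bigcup\bigcup f$ as appropriate), which is standard. Note that since each $\M_v$ is transitive, the members of $a$ computed in $\M_v$ and the members in the sense of $e_v$ agree, so there is no mismatch between ``elements of $a$ in the Kripke model'' and ``elements of $a$ in $\M_v$'', and \Cref{Proposition:Delta_0 locally} applies without friction.
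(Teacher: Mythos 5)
Your proof is correct and follows essentially the same strategy as the paper's: peel off the outermost existential, use \Cref{Proposition:Delta_0 locally} to push the $\Delta_0$ hypothesis down into the classical model $\M_v$, invoke $\AC$ there, and transfer the $\Delta_0$ matrix of the conclusion back up to the node $v$. The only difference is cosmetic --- you use the choice-function formulation of $\AC$ while the paper uses the disjoint-family/transversal formulation $\forall a ( (\forall x \in a \forall y \in a \ (x \neq y \rightarrow x \cap y = \emptyset) ) \rightarrow \exists b \forall x \in a \exists ! z \in b \ z \in x )$, whose matrix is $\Delta_0$ as written and so needs no unwinding.
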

\begin{proof}
    Recall that the axiom of choice is the following statement:
    \begin{equation*}
    \forall a ( (\forall x \in a \forall y \in a \ (x \neq y \rightarrow x \cap y = \emptyset) ) \rightarrow \exists b \forall x \in a \exists ! z \in b \ z \in x  ).
    \tag{$\mathsf{AC}$}
    \end{equation*}
    Let $v \in K$ and $a \in \M_v$ such that $v \Vdash \forall x \in a \forall y \in a \ (x \neq y \rightarrow x \cap y = \emptyset)$. This is a $\Delta_0$-formula,  so \Cref{Proposition:Delta_0 locally} yields that $\M_v \vDash \forall x \in a \forall y \in a \ (x \neq y \rightarrow x \cap y = \emptyset)$. As $\M_v \vDash \mathsf{AC}$, there is some $b \in \M_v$ such that $\M_v \vDash \forall x \in a \exists ! z \in b \ z \in x$. Again, this is a $\Delta_0$-formula, so it holds that $v \Vdash \forall x \in a \exists ! z \in b \ z \in x$. As $b \in \M_v$, we have $v \Vdash \exists b \forall x \in a \exists ! z \in b \ z \in x$. But this shows that $v \Vdash \mathsf{AC}$.
\end{proof}

As $\IKP^+$ contains the bounded separation axiom, it follows that $\mathsf{AC}$ implies the law of excluded middle for bounded formulas in the models of the proposition (see \cite[Chapter 10.1]{AczelRathjen2010}). We summarise the results of this section in the following corollary.

\begin{corollary}
    If $K(\mathcal{M})$ is a Kripke model with classical domains such that every $\M_v$ is a model of the axiom of choice, then $K(\mathcal{M}) \Vdash \IKP^+ + \MP + \AC$. \qed
\end{corollary}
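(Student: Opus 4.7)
The plan is essentially to observe that this corollary is a direct combination of the three results established immediately above: Iemhoff's theorem (\Cref{Theorem:Iemhoff satisfies BCZF}) gives $K(\M) \Vdash \IKP^+$, the preceding proposition on Markov's principle gives $K(\M) \Vdash \MP$, and the proposition on the axiom of choice gives $K(\M) \Vdash \AC$ under the hypothesis that every $\M_v$ models $\AC$.

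So the proof would just invoke these three results in sequence. First I would note that $K(\M)$ is by assumption a Kripke model with classical domains, so \Cref{Theorem:Iemhoff satisfies BCZF} directly yields $K(\M) \Vdash \IKP^+$, which gives all axioms and schemes of $\IKP$ together with bounded strong collection and set-bounded subset collection. Then I would invoke the preceding proposition to obtain $K(\M) \Vdash \MP$, which only needs the classical domain structure (it goes through \Cref{Proposition:Delta_0 locally} applied to the $\Delta_0$-statement defining $\MP$). Finally, the hypothesis that each $\M_v \vDash \AC$ lets me apply the last proposition to conclude $K(\M) \Vdash \AC$.

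Since these three results together cover every axiom and scheme listed on the right-hand side, conjoining them gives $K(\M) \Vdash \IKP^+ + \MP + \AC$. There is no real obstacle here: the corollary is explicitly labelled as a summary of the section's results, so the only subtlety is confirming that the hypothesis on $\AC$ is used only for the $\AC$ clause and not for the other two, which follows because the proofs of $\IKP^+$ and $\MP$ in the preceding material require only that each $\M_v$ be a transitive model of $\ZF$. The proof therefore consists of a single sentence citing the three previous results.
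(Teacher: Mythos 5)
Your proposal is correct and matches the paper exactly: the corollary carries a \qed in its statement precisely because it is the conjunction of \Cref{Theorem:Iemhoff satisfies BCZF} and the two preceding propositions on $\MP$ and $\AC$, with the $\AC$ hypothesis used only for the last of these. Nothing further is needed.
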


\subsubsection{A Failure of Exponentiation}
\label{Subsection: A failure of exponentiation}

In this section, we will exhibit a failure of the axiom of exponentiation in particular Kripke models with classical domains.\footnote{The results in this section are based on the third chapter of the second author's master's thesis \cite{Passmann2018}, supervised by Benedikt Löwe at the University of Amsterdam.}

\begin{proposition}
    \label{Proposition:Iemhoff no Exp}
    Let $K(\M)$ be a Kripke model with classical domains such that there are $v, w \in K$ with $v < w$. If $a, b \in \M_v$ and $g: a \to b$ is a function contained in $\M_w$ but not in $\M_v$, then $K(\M) \not \Vdash \mathsf{Exp}$.
\end{proposition}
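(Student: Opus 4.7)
The plan is to argue by contradiction: assume that $\mathsf{Exp}$ is forced at some node (in particular, at $v$), instantiate the axiom with the specific sets $a, b \in \M_v$, and derive a contradiction at $w$ by exhibiting an element that must belong to the ``exponential set'' but cannot actually be in it.

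More concretely, I would first suppose that $K(\M), v \Vdash \mathsf{Exp}$. Instantiating with $a, b \in \M_v$, there must exist $z \in \M_v$ such that
\begin{equation*}
    v \Vdash \forall f\,(f \in z \leftrightarrow f : a \to b).
\end{equation*}
By the forcing clause for the universal quantifier, this gives, for every $w' \geq v$ and every $f \in \M_{w'}$, that $w' \Vdash f \in z \leftrightarrow f : a \to b$. Apply this at $w$ with $f = g$, which is legitimate because $g \in \M_w = D_w$.

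The next step is to observe that the statement ``$f : a \to b$'' is a $\Delta_0$-formula, so by \Cref{Proposition:Delta_0 locally} it is evaluated locally. Since $g$ is genuinely a function from $a$ to $b$ in $\M_w$, we obtain $w \Vdash g : a \to b$, and therefore $w \Vdash g \in z$. Unfolding the interpretation of $\in$ in $K(\M)$, this means $(g, z) \in e_w$, i.e., $g \in z$ as actual sets.

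Finally, since $z \in \M_v$ and $\M_v$ is a transitive model of $\ZF$, transitivity forces every element of $z$ to lie in $\M_v$; in particular $g \in \M_v$, contradicting our assumption that $g \notin \M_v$. Hence $v \not\Vdash \mathsf{Exp}$, and so $K(\M) \not\Vdash \mathsf{Exp}$. The only mildly delicate point is making sure the bookkeeping of where each fact lives is correct, namely using $\Delta_0$-absoluteness to pass ``$g : a \to b$'' from $\M_w$ into the forcing relation, and then using transitivity of $\M_v$ (not $\M_w$) to pull $g$ back into $\M_v$.
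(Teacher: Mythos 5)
Your argument is correct and is essentially the paper's own proof: instantiate the forced instance of $\mathsf{Exp}$ at $v$ to obtain a witness $z \in \M_v$, push the biconditional up to $w$ via the universal-quantifier clause, use local ($\Delta_0$) evaluation of ``$g : a \to b$'' to conclude $w \Vdash g \in z$, and then use transitivity of $\M_v$ to derive the contradiction $g \in \M_v$. The only difference is that you explicitly cite \Cref{Proposition:Delta_0 locally} where the paper leaves that step implicit, which is a harmless (indeed welcome) addition.
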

\begin{proof}    
    Assume, towards a contradiction, that $K(\M) \Vdash \mathsf{Exp}$. Further, assume that $a, b \in \M_v$ and $g: a \to b$ is a function contained in $\M_w$ but not in $\M_v$. Then, 
    $$
    K(\M), v \Vdash \forall x \ \forall y \ \exists z \ \forall f (f \in z \ \leftrightarrow \ f: x \to y),
    $$ 
    and by the definition of our semantics this just means that there is some $c \in \M_v$ such that
    $
    K(\M), v \Vdash \forall f (f \in c \ \leftrightarrow \ f: a \to b).
    $
    By the semantics of universal quantification, this means that 
    $
    K(\M), w \Vdash g \in c \ \leftrightarrow \ g: a \to b.
    $
    Since $g$ is indeed a function from $a \to b$, it follows that
    $
    K(\M), w \Vdash g \in c.
    $
    As $c$ is a member of $\M_v$ by assumption, we have $g \in c \in \M_v$. Hence, by transitivity, $g \in \M_v$. But this is a contradiction to our assumption that $g$ is not contained in $\M_v$.
\end{proof}

Of course, when adding a generic filter for a non-trivial forcing notion, we always add such a function, namely the characteristic function of the generic filter. Therefore, \Cref{Proposition:Iemhoff no Exp} yields:

\begin{corollary}
    \label{Corollary:Iemhoff not CZF}
    Let $K(\M)$ be a Kripke model with classical domains. If there are nodes $v < w \in K$ such that $\M_w$ is a non-trivial generic extension of $\M_v$ (i.e., $\M_w = \M_v[G]$ for some generic $G \notin \M_v$), then it is not a model of $\CZF$. \qed
\end{corollary}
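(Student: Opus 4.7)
The corollary is essentially an application of \Cref{Proposition:Iemhoff no Exp} combined with the remark from the paper that exponentiation follows from subset collection over $\CZF$. So the plan has two movements: produce a function witnessing the hypothesis of \Cref{Proposition:Iemhoff no Exp}, and then translate the failure of $\mathsf{Exp}$ into a failure of $\CZF$.

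First I would fix the setup. Let $\mathbb{P} \in \M_v$ be the non-trivial forcing notion with respect to which $G$ is $\M_v$-generic, so $\M_w = \M_v[G]$. Set $a = \mathbb{P}$ and $b = 2 = \{0,1\}$; both are elements of $\M_v$. Define $g = \chi_G : \mathbb{P} \to 2$ to be the characteristic function of $G$, i.e.\ $g(p) = 1$ if $p \in G$ and $g(p) = 0$ otherwise. Since $G \in \M_v[G] = \M_w$, we have $g \in \M_w$. On the other hand, $g \notin \M_v$: from $g$ one recovers $G = \{p \in \mathbb{P} \mid g(p) = 1\}$ by an instance of $\Delta_0$-separation available in $\M_v$, and $G \notin \M_v$ by hypothesis.

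Now I would apply \Cref{Proposition:Iemhoff no Exp} directly with these choices of $v < w$, $a$, $b$, and $g$. That proposition yields $K(\M) \not\Vdash \mathsf{Exp}$. To finish, I would invoke the observation already recorded in the excerpt (after the statement of $\mathsf{Exp}$) that exponentiation is a constructive consequence of the subset collection scheme over $\CZF$; hence any model that satisfies $\CZF$ must satisfy $\mathsf{Exp}$. Contrapositively, since $K(\M) \not\Vdash \mathsf{Exp}$, we conclude $K(\M) \not\Vdash \CZF$.

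There is no real obstacle here, so this is mostly bookkeeping. The only point that deserves care is the claim $g \notin \M_v$, which requires that $\M_v$ can actually decode $G$ from $\chi_G$; this is immediate because $\M_v$ contains $\mathbb{P}$ and satisfies $\ZF$ (in particular separation), so the set $\{p \in \mathbb{P} \mid \chi_G(p) = 1\}$ would belong to $\M_v$ if $\chi_G$ did. The phrase "non-trivial generic extension" in the hypothesis of course means we are in a situation where $G \notin \M_v$, so this decoding argument genuinely produces a contradiction.
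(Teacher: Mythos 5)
Your proof is correct and follows exactly the route the paper intends: the characteristic function $\chi_G$ of the generic filter is precisely the witness the paper names in the remark preceding the corollary, and the reduction from failure of $\mathsf{Exp}$ to failure of $\CZF$ via subset collection is the same. Your added care in verifying $g \notin \M_v$ by decoding $G$ from $\chi_G$ is a reasonable bit of bookkeeping that the paper leaves implicit.
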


In Kripke semantics for intuitionistic logic, $K(\M) \Vdash \neg \phi$ is strictly stronger than $K(\M) \not \Vdash \phi$. The above results give an instance of the latter (a so-called weak counterexample), now we will provide an example of the former (a strong counterexample).

\begin{proposition}
    There is a Kripke model with classical domains $K(\M)$ that forces the negation of the exponentiation axiom, i.e., $K(\M) \Vdash \neg \mathsf{Exp}$.
\end{proposition}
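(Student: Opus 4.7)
The plan is to iterate the weak counterexample of Proposition~\ref{Proposition:Iemhoff no Exp} along an infinite linear frame so that at \emph{every} node a new function $\omega \to 2$ appears strictly above it. This upgrades the weak counterexample to a strong one: to obtain $v \Vdash \neg\mathsf{Exp}$ one needs $w \not\Vdash \mathsf{Exp}$ for every $w \geq v$, and the iteration is arranged so that this holds.

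Concretely I would take the frame $(K,\leq) = (\omega,\leq)$, fix a countable transitive $\M_0 \vDash \ZFC$ inside some ambient set-theoretic universe, and recursively set $\M_{n+1} = \M_n[G_n]$, where $G_n$ is Cohen-generic over $\M_n$ for the poset $\mathrm{Fn}(\omega,2)$ of finite partial functions. Writing $g_n := \bigcup G_n$, the function $g_n : \omega \to 2$ lies in $\M_{n+1} \setminus \M_n$; countability of each $\M_n$ (preserved through the generic extensions) ensures that the required generics exist externally. The assignment $n \mapsto \M_n$ is sound because each $\M_n$ is transitive, $\M_n \subseteq \M_{n+1}$, and $\in$ and $=$ are absolute between transitive classes; so $K(\M)$ is a well-defined Kripke model with classical domains.

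It then remains to verify $m \not\Vdash \mathsf{Exp}$ for every $m \in \omega$. For fixed $m$, the data $a = \omega,\ b = 2 \in \M_m$ together with $g_m \in \M_{m+1} \setminus \M_m$ satisfy the hypothesis of Proposition~\ref{Proposition:Iemhoff no Exp} with $v = m$ and $w = m+1$, and the same argument yields $m \not\Vdash \mathsf{Exp}$. Since every $w \geq v$ is such an $m$, this gives $v \Vdash \neg\mathsf{Exp}$ at every node $v$, and hence $K(\M) \Vdash \neg\mathsf{Exp}$. I expect the only mildly technical point to be the soundness check, which collapses to routine absoluteness for transitive classes; no substantive obstacle arises.
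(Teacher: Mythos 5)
Your proposal is correct and follows essentially the same route as the paper: both iterate Cohen forcing along the frame $(\omega,<)$ starting from a countable transitive model of $\ZFC$, and both apply the argument of Proposition~\ref{Proposition:Iemhoff no Exp} at every node (with the characteristic function of the generic as witness) to upgrade the weak counterexample to $v \Vdash \neg\mathsf{Exp}$ everywhere. Your explicit soundness check via absoluteness of $\in$ and $=$ for transitive classes is a welcome detail the paper leaves implicit.
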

\begin{proof}
    Consider the Kripke frame $K = (\omega,<)$ where $<$ is the standard ordering of the natural numbers. Construct the assignment $\M$ as follows: Choose $\M_0$ to be any countable and transitive model of $\ZFC$. If $\M_i$ is constructed, let $\M_{i+1} = \M_i[G_i]$ where $G_i$ is generic for Cohen forcing over $\M_i$ (actually, every non-trivial forcing notion does the job). Clearly, $\M$ is a sound assignment of models of set theory. Now, we want to show that for every $i \in \omega$ we have that $i \Vdash \neg \mathsf{Exp}$, i.e., for all $j \geq i$ we need to show that $j \Vdash \mathsf{Exp}$ implies $j \Vdash \bot$. This, however, is done exactly as in the proof of \Cref{Proposition:Iemhoff no Exp}, where the witnesses are the characteristic functions $\chi_{G_i}$ of the generic filters $G_i$.
\end{proof}

\subsection{Classical Domains and the Constructible Universe}

We define the relativisation $\phi \mapsto \phi^\mathrm{L}$ of a formula of set theory to the constructible universe $\mathrm{L}$ in the usual way. Note, however, that in our setting the evaluation of universal quantifiers and implications is in general not local (in contrast to classical models of set theory). Nevertheless, we will now show that---under mild assumptions---statements about the constructible universe can be evaluated locally. The following is a well-known fact.

\begin{fact}[{\cite[Lemma 13.14]{Jech2003}}]
    \label{Prop:L Sigma_1}
    There is a $\Sigma_1$-formula $\phi(x)$ such that in any model $M \vDash \ZFC$, we have $M \vDash \phi(x) \leftrightarrow x \in \mathrm{L}$. 
\end{fact}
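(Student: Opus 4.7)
The plan is to exhibit an explicit $\Sigma_1$-formula $\phi(x)$ which, in any model $M \vDash \ZFC$, expresses that $x \in \mathrm{L}^M$ by existentially quantifying over a witness consisting of an ordinal $\alpha$ together with a function $f$ enumerating the constructible hierarchy up to stage $\alpha$. Informally, $\phi(x)$ will say: there exist $\alpha$ and $f$ such that $f$ is a function with $\dom(f) = \alpha + 1$ satisfying the recursive clauses $f(0) = \emptyset$, $f(\beta + 1) = \Df(f(\beta))$ for $\beta < \alpha$, and $f(\lambda) = \bigcup_{\beta < \lambda} f(\beta)$ for limit $\lambda \leq \alpha$, and $x \in f(\alpha)$. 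Since a single block of existential quantifiers with a $\Delta_0$ matrix is $\Sigma_1$, the task reduces to making this matrix $\Delta_0$.

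The crucial technical step is to capture the definable powerset operator $\Df$ by a $\Delta_0$-formula. This proceeds via an absolute Gödel coding of formulas of $\mathcal{L}_\in$ as hereditarily finite sets, together with an internal satisfaction predicate $\mathrm{Sat}(X, \godel{\psi}, \bar a)$ for set-sized structures $(X, \in \upharpoonright X)$. The standard trick is to bundle the Tarskian satisfaction tables for each $f(\beta)$ into the witness $f$ itself; once the satisfaction table is part of the data, the clauses verifying that it correctly interprets the Tarski recursion on $(f(\beta), \in)$ become $\Delta_0$, and so does the clause stating that $f(\beta+1)$ comprises exactly those subsets of $f(\beta)$ defined by coded formulas with parameters from $f(\beta)$.

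With these pieces in hand, one writes
\[
\phi(x) \;\equiv\; \exists \alpha \, \exists f \, \bigl( \Ord(\alpha) \wedge \Psi(f, \alpha) \wedge x \in f(\alpha) \bigr),
\]
where $\Psi$ is the $\Delta_0$ matrix described above. Correctness is verified in both directions: if $x \in \mathrm{L}$, then $x \in L_\alpha$ for some $\alpha$, and the restriction of the $L$-hierarchy together with its satisfaction tables to $\alpha+1$ supplies the witness; conversely, any such $f$ must satisfy $f(\beta) = L_\beta$ for every $\beta \leq \alpha$ by transfinite induction along its set-sized domain, so $x \in f(\alpha) = L_\alpha \subseteq \mathrm{L}$.

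The main obstacle is precisely the $\Delta_0$-definability of the successor clause: a naive formulation of $\Df$ through an externally-applied satisfaction predicate is only $\Sigma_1$, which would leak unwanted quantifier complexity into $\Psi$ and jeopardise the overall $\Sigma_1$-bound. The remedy of carrying the Tarski tables along inside the witness, as sketched above, is the standard solution, and the remaining bookkeeping is routine — it is worked out in full in the cited reference.
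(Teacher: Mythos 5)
Your proposal is correct. The paper itself offers no proof of this statement --- it is imported as a Fact with a citation to Jech --- so the only meaningful comparison is with the cited source, and your argument is the standard one: witness the membership $x \in \mathrm{L}$ by an ordinal $\alpha$ and a function $f$ enumerating the hierarchy up to $\alpha$, and push all the work into making the recursion clauses $\Delta_0$ so that the single outer existential block yields a $\Sigma_1$-formula. You correctly identify the one non-routine point, namely that a naively externalised satisfaction predicate for the successor clause would be $\Sigma_1$ rather than $\Delta_0$, and your remedy of packaging the Tarski satisfaction tables into the witness is sound; Jech's own treatment sidesteps satisfaction altogether by expressing the definable-powerset operator through closure under the finitely many G\"odel operations, which achieves the same $\Delta_0$ bound on the matrix, but this is an inessential variation on the same idea.
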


From now on, let `$x \in \mathrm{L}$' be an abbreviation for $\phi(x)$, where $\phi$ is the $\Sigma_1$-formula from \Cref{Prop:L Sigma_1}.

\begin{proposition}\label{Proposition: L locally}
    Let $K$ be a Kripke frame and $\M$ a sound assignment of nodes to transitive models of $\ZFC$. Then $K(\M), v \Vdash x \in \mathrm{L}$ if and only if $\M_v \vDash x \in \mathrm{L}$, i.e., the formula $x \in \mathrm{L}$ is evaluated locally.
\end{proposition}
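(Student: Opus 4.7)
The plan is to reduce the statement to the already-established locality of $\Delta_0$-formulas (Proposition~\ref{Proposition:Delta_0 locally}). By Fact~\ref{Prop:L Sigma_1}, the predicate ``$x \in \mathrm{L}$'' is expressed by a $\Sigma_1$-formula $\phi(x)$, i.e., there is a $\Delta_0$-formula $\psi(x,y)$ such that $\phi(x)$ is $\exists y \, \psi(x,y)$. So I would begin by unfolding this $\Sigma_1$-representation and then handle the two directions in parallel.

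For the left-to-right direction, suppose $K(\M), v \Vdash x \in \mathrm{L}$, i.e., $K(\M), v \Vdash \exists y \, \psi(x,y)$. By the clause for existential quantification in the forcing definition for Kripke models with classical domains, there is some $y \in \M_v$ with $K(\M), v \Vdash \psi(x,y)$. Since $\psi$ is $\Delta_0$, Proposition~\ref{Proposition:Delta_0 locally} gives $\M_v \vDash \psi(x,y)$, and therefore $\M_v \vDash \exists y \, \psi(x,y)$, which is $\M_v \vDash x \in \mathrm{L}$. The converse direction is completely symmetric: if $\M_v \vDash x \in \mathrm{L}$, then there is some $y \in \M_v$ with $\M_v \vDash \psi(x,y)$; locality of $\Delta_0$-formulas yields $K(\M), v \Vdash \psi(x,y)$, and the existential clause of forcing then gives $K(\M), v \Vdash \exists y \, \psi(x,y)$, as desired.

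There is no real obstacle here: the whole argument is driven by the happy coincidence that the Kripke clause for $\exists$ and the classical clause for $\exists$ both merely ask for a witness in $\M_v$, while the $\Delta_0$-matrix is evaluated identically by Proposition~\ref{Proposition:Delta_0 locally}. The only thing one must be careful about is that, unlike for $\forall$ and $\to$, no quantification over future nodes $w \geq v$ enters the argument, which is precisely why $\Sigma_1$ (but not $\Pi_1$) statements about $\mathrm{L}$ are evaluated locally. Hence the proof is essentially a one-line unfolding plus an appeal to Proposition~\ref{Proposition:Delta_0 locally}.
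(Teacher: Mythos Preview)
Your proof is correct and follows exactly the same approach as the paper's own proof: both arguments observe that the existential quantifier is evaluated locally (the witness must be found in $\M_v$) and then appeal to \Cref{Proposition:Delta_0 locally} for the $\Delta_0$-matrix. Your version is simply a more detailed unpacking of the same idea.
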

\begin{proof}
    Recall that the existential quantifier is defined locally, i.e., the witness for the quantification must be found within the domain associated to the current node in the Kripke model. Then, the statement of the proposition follows from the fact that $\Delta_0$-formulas are evaluated locally by \Cref{Proposition:Delta_0 locally}.
\end{proof}

The crucial detail of the following technical \Cref{Proposition:L frame} is the fact that the constructible universe is absolute between inner models of set theory. We will therefore need to strengthen the notion of a sound assignment. If $N$ and $M$ are transitive models of set theory, we say that $N$ is an \emph{inner model of $M$} if $N \subseteq M$, $N$ is a model of $\ZFC$, $N$ is a transitive class of $M$, and $N$ contains all the ordinals of $M$ (see \cite[p. 182]{Jech2003}).

\begin{definition}
    Let $K$ be a Kripke frame. We say that a sound assignment $\M: K \to V$ \emph{agrees on $\mathrm{L}$} if there is a transitive model $N \vDash \mathsf{ZFC} + V = \mathrm{L}$ such that $N$ is an inner model of $\M_v$ for every $v \in K$.
\end{definition}

In particular, if $K$ is a Kripke frame and $\M: K \to V$ agrees on $\mathrm{L}$, then we are justified in referring to the constructible universe $\mathrm{L}$ from the point of view of all models in $\M$.

\begin{lemma}\label{Proposition:L frame}
    Let $K$ be a Kripke frame and $\M$ be a sound assignment that agrees on $\mathrm{L}$. Then the following are equivalent for any formula $\phi(x)$ in the language of set theory, and all parameters $a_0,\dots,a_{n-1} \in \mathrm{L}$:
    \begin{enumerate}
        \item for all $v \in K$, we have $K(\M),v \Vdash (\phi(a_0,\dots,a_{n-1}))^\mathrm{L}$,
        \item for all $v \in K$, we have $\M_v \vDash (\phi(a_0,\dots,a_{n-1}))^\mathrm{L}$,
        \item there is a $v \in K$ such that $\M_v \vDash (\phi(a_0,\dots,a_{n-1}))^\mathrm{L}$, and,
        \item $\mathrm{L} \vDash \phi(a_0,\dots,a_{n-1})$.
    \end{enumerate}
\end{lemma}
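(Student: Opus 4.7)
The plan is to show the equivalences by reducing everything to a single key lemma: for every formula $\phi$ in the language of set theory, every $v \in K$, and all parameters $\bar a = a_0,\dots,a_{n-1} \in \mathrm{L}$, we have $K(\M), v \Vdash \phi^{\mathrm{L}}(\bar a)$ if and only if $\mathrm{L} \vDash \phi(\bar a)$. Once this is established, (i) $\Leftrightarrow$ (iv) is immediate, and (ii) $\Leftrightarrow$ (iv) follows from the assumption that $\M$ agrees on $\mathrm{L}$: the inner model $N \vDash \mathsf{ZFC} + V = \mathrm{L}$ witnesses $\mathrm{L}^{\M_v} = N = \mathrm{L}$ for every $v$ (by absoluteness of $\mathrm{L}$ between inner models, Fact~\ref{Prop:L Sigma_1}), so $\M_v \vDash \phi^{\mathrm{L}}(\bar a) \Leftrightarrow \mathrm{L} \vDash \phi(\bar a)$ uniformly in $v$. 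Then (ii) $\Rightarrow$ (iii) is trivial (assuming $K$ nonempty, which may be assumed without loss of generality) and (iii) $\Rightarrow$ (iv) is just this absoluteness applied at one node.

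I would prove the key lemma by induction on $\phi$. Note that $\mathrm{L} \subseteq \M_v$ for every $v$ because $N$ is an inner model of every $\M_v$, so the parameters $\bar a$ are elements of every $D_v = \M_v$ and the inductive clauses make sense at every node. The atomic cases $a \in b$ and $a = b$ hold by the definition of $K(\M)$ together with transitivity of $N$ (which guarantees $\in^{\mathrm{L}}$ agrees with $\in$ on parameters from $\mathrm{L}$). The conjunction and disjunction cases are routine. For the existential case $(\exists x\, \phi)^{\mathrm{L}} \equiv \exists x(x \in \mathrm{L} \wedge \phi^{\mathrm{L}})$: if $v \Vdash (\exists x\, \phi)^{\mathrm{L}}(\bar a)$, a witness $b \in D_v$ with $v \Vdash b \in \mathrm{L}$ lies in $\mathrm{L}$ by Proposition~\ref{Proposition: L locally}, so the induction hypothesis gives $\mathrm{L} \vDash \phi(b,\bar a)$; conversely any witness $b \in \mathrm{L}$ lies in $\M_v$ and satisfies $v \Vdash b \in \mathrm{L}$ by the same proposition. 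The universal and implication cases are where the assumption that $\M$ agrees on $\mathrm{L}$ pays off: by the induction hypothesis and uniformity, the truth value of $\phi^{\mathrm{L}}(\bar a)$ at any node $w \geq v$ depends only on $\mathrm{L}$ and not on $w$; hence the forcing clause $\forall w \geq v$ collapses to a single condition, matching the corresponding classical clause in $\mathrm{L}$.

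The main obstacle is precisely the implication and bounded-universal cases, where Kripke forcing is not \emph{a priori} local. The delicate point is to arrange the induction hypothesis in the stronger form that $K(\M), w \Vdash \phi^{\mathrm{L}}(\bar a) \Leftrightarrow \mathrm{L} \vDash \phi(\bar a)$ for \emph{all} $w$ simultaneously, so that the forcing quantifier over successor nodes in the clauses for $\rightarrow$ and $\forall$ becomes vacuous. This works here only because parameters from $\mathrm{L}$ are shared between all $\M_w$ and because $\mathrm{L}^{\M_w}$ is independent of $w$; without the hypothesis that $\M$ agrees on $\mathrm{L}$, different nodes could a priori see different constructible universes and the reduction would fail.
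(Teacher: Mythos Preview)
Your proposal is correct and follows essentially the same strategy as the paper: the equivalences among (ii), (iii), (iv) are dispatched by absoluteness of $\mathrm{L}$ between inner models, and the link to (i) is established by an induction on formulas carried out simultaneously for all nodes, with the key observation that the clauses for $\rightarrow$ and $\forall$ collapse because the induction hypothesis is node-independent. The only cosmetic difference is that the paper phrases the induction as (i)~$\Leftrightarrow$~(ii) (comparing $K(\M),v$ to $\M_v$) whereas you go directly (i)~$\Leftrightarrow$~(iv) (comparing $K(\M),v$ to $\mathrm{L}$); since $\M_v \vDash \phi^{\mathrm{L}}$ and $\mathrm{L} \vDash \phi$ are already known to coincide, the two inductions are interchangeable.
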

\begin{proof} 
    By our assumption, $a_0,\dots,a_{n-1} \in \M_v$ for all $v \in K$  as $\mathrm{L} \subseteq \M_v$ for all $v \in K$. The equivalence of (ii), (iii) and (iv) follows directly from the fact that $\mathrm{L}$ is absolute between inner models of $\ZFC$. 
    
    The equivalence of (i) and (ii) can be proved by an induction on set-theoretic formulas simultaneously for all nodes in $K$ with the induction hypothesis as in the proof of \Cref{Proposition:Delta_0 locally}. For the case of the universal quantifier, we make use of the fact that $\M$ agrees on $\mathrm{L}$ (hence, that $\mathrm{L}$ is absolute between all models $\M_v$ for $v \in K$), and apply \Cref{Proposition: L locally}.
\end{proof}

\section{The Logical Structure of IKP}
\label{Section: The Logical Structure of IKP}

The aim of this section is to analyse the propositional and first-order logics of $\IKP$. First, we will introduce the logics of interest in a general way, and then proceed to introduce a Kripke model construction that we will use to determine certain logics of $\IKP^+$. 

\subsection{Logics and the De Jongh Property}

We will be concerned with both propositional and first-order logics.

\begin{definition}
    \label{Definition: Translation}
    A \emph{propositional translation $\sigma: \Prop \to \mathcal{L}_T^\sent$} is a map from propositional letters to sentences in the appropriate language that is extended to formulas in the obvious way.
    
    A \emph{first-order translation $\sigma: \mathcal{L}_J \to \mathcal{L}^\mathsf{form}_T$} is a map from the collection of relation symbols of $\mathcal{L}_J$ to $\mathcal{L}_\in$-formulas such that $n$-ary relation symbols are mapped to formulas with $n$-free variables. Then $\sigma$ is extended to all predicate formulas in $\mathcal{L}_J$ in the obvious way.
    
    If $J$ is a first-order logic with equality and $T$ a theory with equality, then a \emph{first-order equality translation $\sigma$} is a first-order translation with the extra condition that equality of $J$ is mapped to equality $T$.
\end{definition}

Following Visser \cite[Section 2.2]{Visser1999}, we only consider the case of predicate languages that contain only relation symbols by eliminating any function symbol $f$ by replacing it with a relation $R_f(x_0,\dots,x_n,y)$ defined by the equality $f(x_0,\dots,x_n) = y$. If we eliminate a function symbol in such a way, we demand that the interpreting theory $T$ proves that $\sigma(R_f)$ is the graph of a function (i.e., $\sigma$ being a translation is then dependent on the theory $T$). Nested function symbols can be eliminated with the usual procedure of introducing variables for the intermediate values.

Further, given a first-order logic $J$, we will make use of the relative translation $(\cdot)^E$ (where we shall always tacitly assume that $E$ is a fresh unary predicate symbol) that acts non-trivially only on quantifiers:
\begin{align*}
    (\exists x \ \phi(x))^E &= (\exists x (Ex \wedge \phi^E(x))), \text{ and,} \\
    (\forall x \ \phi(x))^E &= (\forall x (Ex \rightarrow \phi^E(x))).
\end{align*}

\begin{definition}
    Given a theory $T$, formulated in a language $\mathcal{L}_T$, we define the following logics:
    \begin{enumerate}
        \item The propositional logic $\Logic(T)$ of $T$ consists of the propositional formulas $\phi$ such that $T \vdash \phi^\sigma$ for all propositional translations $\sigma$.
        \item The first-order logic $\QLogic(T)$ of $T$ consists of the first-order formulas $\phi$ such that $T \vdash \phi^\sigma$ for all first-order translations $\sigma$ in the language $\mathcal{L}_T$.
        \item The relative first-order logic $\QLogic_E(T)$ of $T$ consists of the first-order formulas $\phi$ such that $T \vdash (\phi^E)^\sigma$ for all first-order translations $\sigma$ in the language $\mathcal{L}_T \cup \set{E}$, where $E$ is the fresh unary predicate symbol introduced for the relative translation.
        \item The first-order logic with equality $\QLogic^=(T)$ of $T$ consists of the first-order formulas $\phi$ such that $T \vdash \phi^\sigma$ for all first-order equality translations $\sigma$ in the language $\mathcal{L}_T$.
    \end{enumerate}
\end{definition}

\begin{definition}
    Let $T$ be a theory and $J$ a logic. We define the theory $T(J)$ as follows:
    \begin{enumerate}
        \item If $J$ is a propositional logic, we define $T(J)$ to be the theory obtained from $T$ by adding all sentences of the form $A^\sigma$ for formulas $A \in J$ and propositional translations $\sigma$.
        \item If $J$ is a first-order logic, we define $T(J)$ to be the theory obtained from $T$ by adding all sentences of the form $A^\sigma$ for formulas $A \in J$ and first-order translations $\sigma$.
    \end{enumerate}
\end{definition}

\begin{definition}
    \label{Definition: de Jongh property theorem}
    We say that a theory $T$ satisfies \emph{the de Jongh property for a logic $J$} if $\Logic(T(J)) = J$. A theory $T$ based on intuitionistic logic satisfies \emph{de Jongh's theorem} if $\Logic(T) = \IPC$.
\end{definition}

Before embarking on determining some logics of $\IKP^+$, let us survey a few known results. De Jongh \cite{DeJonghUnpublished,DeJongh1970} started the investigations of logics of arithmetical theories, establishing that $\Logic(\HA) = \IPC$ and $\QLogic_E(\HA) = \IQC$. De Jongh, Verbrugge and Visser \cite{deJongh2011} introduced the \emph{de Jongh property} and showed---among other results---that $\Logic(\HA(J)) = J$ for logics $J$ that are characterised by classes of finite frames. Considering a logic that is weaker than intuitionistic logic, Ardeshir and Mojtahedi \cite{ArdeshirMojtahedi2014} proved that the propositional logic of basic arithmetic is the basic propositional calculus.

For the sake of a counterexample to the de Jongh property, consider the theory $\mathsf{HA} + \mathsf{MP} + \mathsf{ECT}_0$, i.e., Heyting arithmetic extended with \emph{Markov's Principle} ($\mathsf{MP}$) and \emph{Extended Church's Thesis} ($\mathsf{ECT}_0$). Even though these principles are considered constructive, one can show that the propositional logic of this theory is an intermediate logic, i.e., $\IPC \subsetneq \Logic(\mathsf{HA} + \mathsf{MP} + \mathsf{ECT}_0) \subsetneq \CPC$ (this follows from results of Rose \cite{Rose1953} and McCarty \cite{McCarty1991}; for details see the discussion at the end of \cite[Section 2]{deJongh2011}). In conclusion, $\mathsf{HA} + \mathsf{MP} + \mathsf{ECT}_0$ does not satisfy de Jongh's theorem.

Turning now towards set theory, Passmann \cite{Passmann2020} used a Kripke-model construction to show that $\Logic(\IZF) = \Logic(\CZF) = \IPC$, and, in fact, that $\Logic(T(J)) = J$ for every set theory $T \subseteq \IZF$ and every logic $J$ characterised by a class of finite frames. H. Friedman and Ščedrov \cite{FriedmanScedrov1986} conclude from their earlier conservativity results \cite{FriedmanScedrov1985} that $\Logic(\mathsf{ZFI}) = \IPC$ holds for the two-sorted theory $\mathsf{ZFI}$. 

If $C$ is a class of formulas, we write $\Logic^C(T)$ for the propositional logic of $T$ where we restrict to the class of translations to maps $\sigma$ with $\ran(\sigma) \subseteq C$. We define $\QLogic^C(T)$ and $\QLogic_E^C(T)$ in the same way.

An important observation of H. Friedman and Ščedrov is the following.

\begin{theorem}[H. Friedman and Ščedrov, {\cite[Theorem 1.1]{FriedmanScedrov1986}}]
    \label{Theorem: Friedman Scedrov}
    Let $T$ be a set theory based on intuitionistic logic. Suppose that $T$ includes the axioms of Extensionality, Separation, Pairing and (finite) Union. Then $\IQC \subsetneq \QLogic(T)$, i.e., the first-order logic of $T$ is stronger than intuitionistic first-order logic.
\end{theorem}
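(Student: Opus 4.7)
The plan is to establish $\IQC \subseteq \QLogic(T)$ and then exhibit a witness for strict inclusion. The easy direction is immediate from the definitions: if $\IQC \vdash \phi$ and $\sigma$ is any first-order translation of $\phi$'s relation symbols into $\mathcal{L}_\in$-formulas, then the IQC-derivation of $\phi$ translates term-by-term into a derivation of $\phi^\sigma$ over intuitionistic first-order logic, hence over $T$.

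For the strict inclusion, I would exhibit a specific first-order formula $\phi$ in a purely relational language such that (i) $\phi$ admits an $\IQC$ Kripke countermodel, witnessing $\phi \notin \IQC$; and (ii) for every first-order translation $\sigma$ into $\mathcal{L}_\in$-formulas, $T \vdash \phi^\sigma$. The essential set-theoretic mechanism enabling (ii) is the interaction of \emph{full} (unrestricted) Separation with Extensionality: given any $\mathcal{L}_\in$-formula $\psi(x)$, Separation produces the set $\{x \in a : \psi(x)\}$ for any $a$, and Extensionality then forces that two such separated sets are equal exactly when their defining formulas are equivalent on $a$. This gives $T$ a coding power for arbitrary formulas-as-sets that pure intuitionistic predicate logic lacks, and with Pairing and Union one can chain the resulting equalities into genuinely new logical consequences.

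The concrete step is to choose $\phi$ so that, upon any translation $\sigma$, the formulas $\sigma(R_1), \ldots, \sigma(R_n)$ associated with the relation symbols of $\phi$ can be uniformly coded as sets via Separation, and then the desired conclusion is extracted using Extensionality (together with Pairing and Union as needed). The main obstacle is identifying such a formula with the two competing properties: it must be rich enough that the set-theoretic derivation succeeds \emph{regardless} of which $\mathcal{L}_\in$-formula is substituted for each relation symbol, yet structured so that a Kripke model of $\IQC$ can refute it. Friedman and Ščedrov's formula is crafted precisely to balance these requirements, and the crux of the proof is its careful construction.

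Once such a $\phi$ is produced, (i) is verified by a standard Kripke-model construction (typically with non-constant domains where the relation symbols are interpreted so as to block the inference), and (ii) is a uniform syntactic derivation: translate $\phi$, apply Separation to the translated components to obtain witnesses, invoke Extensionality to collapse the relevant equivalences, and conclude $\phi^\sigma$. The uniformity in $\sigma$ is what ultimately distinguishes $T$ from $\IQC$ and forces $\IQC \subsetneq \QLogic(T)$.
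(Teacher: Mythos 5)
This theorem is not proved in the paper at all: it is imported verbatim from Friedman and Ščedrov's 1986 article, so there is no in-paper argument to compare against. Judged on its own terms, your proposal is a correct description of the \emph{shape} of the argument rather than a proof. The easy inclusion $\IQC \subseteq \QLogic(T)$ is fine, and you have correctly identified the mechanism that makes the result work: full Separation lets $T$ form, for any $\mathcal{L}_\in$-formula $\psi$, the ``truth-value'' set $\{z \in \{\emptyset\} : \psi\}$, and Extensionality identifies two such sets exactly when the defining formulas are equivalent, so $T$ can quantify over the truth values of arbitrary instances of a translated predicate --- a capability pure $\IQC$ lacks. (This is also exactly why the paper's refinement only yields $\IQC \subsetneq \QLogic^{\Delta_0}(\CZF)$ when Separation is restricted.)

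The genuine gap is that the entire mathematical content of the theorem --- the existence of a single formula $\phi$ with $\phi \notin \IQC$ but $T \vdash \phi^\sigma$ for \emph{every} translation $\sigma$ --- is deferred: you write that ``Friedman and Ščedrov's formula is crafted precisely to balance these requirements,'' but you neither exhibit the formula, nor carry out the uniform derivation of $\phi^\sigma$ from Separation and Extensionality, nor construct the Kripke countermodel witnessing $\phi \notin \IQC$. Without the witness, steps (i) and (ii) are promissory notes, and nothing in the proposal rules out the a priori possibility that every formula whose translations are all provable is already in $\IQC$. As it stands this is an accurate roadmap to the Friedman--Ščedrov proof, not a proof.
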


This implies, in particular, that $\IZF$ does not satisfy the de Jongh property for $\IQC$. As $\CZF$ only contains $\Delta_0$-separation but full separation is used in the proof of the above theorem, the theorem does not apply to $\CZF$. However, with a slight adaption of the proof of H. Friedman and Ščedrov we can observe the following theorem. If $C$ is a class of formulas, we denote the separation scheme restricted to formulas from $C$ by \emph{$C$-Separation}.

\begin{theorem}
    Let $T$ be a set theory based on intuitionistic logic and $C$ be a class of formulas. Suppose that $T$ includes the axioms of Extensionality, $C$-Separation, Pairing and (finite) Union. Then $\IQC \subsetneq \QLogic^C(T)$, i.e., the $C$-first-order logic of $T$, $\QLogic^C(T)$, is stronger than intuitionistic first-order logic. 
\end{theorem}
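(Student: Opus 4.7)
The plan is to adapt the original proof of \Cref{Theorem: Friedman Scedrov} by Friedman and Ščedrov, carefully tracking the instances of the Separation scheme used in their derivation. Recall that they produce a specific first-order formula $\phi^*$, in a language with certain relation symbols, together with a derivation establishing $T \vdash (\phi^*)^\sigma$ for every first-order translation $\sigma$ (under the assumption of full Separation), and a Kripke countermodel witnessing $\IQC \not\vdash \phi^*$.

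The key observation is that the applications of Separation in the derivation of $(\phi^*)^\sigma$ are always applications to subformulas of $(\phi^*)^\sigma$ that arise as translations, under $\sigma$, of atomic subformulas of $\phi^*$. More precisely, each such application separates a set by a formula of the form $\sigma(R)(\bar{t})$ where $R$ is a relation symbol occurring in $\phi^*$ and $\bar{t}$ is a tuple of terms. Since a $C$-translation $\sigma$ satisfies $\ran(\sigma) \subseteq C$ by definition, every such formula already lies in $C$, and the relevant applications of Separation are in fact instances of $C$-Separation. Consequently, the Friedman-Ščedrov derivation goes through verbatim under the weaker hypothesis of the present theorem, yielding $T \vdash (\phi^*)^\sigma$ for every $C$-translation $\sigma$, i.e., $\phi^* \in \QLogic^C(T)$. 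Combined with $\IQC \not\vdash \phi^*$ (which is a purely logical fact, independent of $T$), this gives the strict inclusion $\IQC \subsetneq \QLogic^C(T)$.

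The main obstacle will be the bookkeeping step above: one must verify that every appeal to Separation in Friedman and Ščedrov's argument is indeed to a $\sigma$-atomic formula, rather than to a boolean combination or a quantified formula that could escape the class $C$ (which we do not assume to be closed under any logical operations). Should any such complex instance appear, there are two natural routes. Either refactor it into several applications of Separation to atomic $\sigma$-formulas, glued together via the available axioms of Pairing and Union (and, if needed, Extensionality to identify the resulting sets); or else replace the witness $\phi^*$ by an alternative first-order formula whose derivation transparently uses only $C$-Separation, while retaining non-derivability in $\IQC$ via a suitable Kripke model.
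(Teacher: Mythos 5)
Your plan coincides with the paper's own (essentially unwritten) argument: the paper offers no proof beyond the remark that the theorem follows ``with a slight adaption of the proof of H.\ Friedman and Ščedrov'' (\Cref{Theorem: Friedman Scedrov}), and the intended adaptation is exactly the one you describe --- the Separation instances in that derivation are applied only to the $\sigma$-images of the atomic formulas of the witnessing first-order formula, which lie in $C$ because $\ran(\sigma) \subseteq C$, so $C$-Separation suffices while the Kripke countermodel for $\IQC$ is unaffected. Your closing caveat about verifying this bookkeeping (and about $C$ not being assumed closed under connectives) is well placed, but it is precisely the verification the paper itself leaves implicit.
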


So, in particular, $\IQC \subsetneq \QLogic^{\Delta_0}(\CZF)$, i.e., the $\Delta_0$-first-order logic of $\CZF$ is strictly stronger than intuitionistic logic. On the other hand, $A \vee \neg A \notin \QLogic^{\Delta_0}(\CZF)$, so $\IQC \subsetneq \QLogic^{\Delta_0}(\CZF) \subsetneq \CQC$.

\subsection{Constructing the Models}
\label{Subsection: Constructing the Models}

We will now introduce a class of Kripke models with classical domains that arise from certain classical models of set theory. These models will later be used to prove our results on logics of $\IKP$.

S. Friedman, Fuchino and Sakai \cite{FFS17} presented family of sentences that we are going to use to imitate the logical behaviour of a given Kripke frame. Consider the following statements $\psi_i$: 
    $$\text{There is an injection from } \aleph_{i+2}^\mathrm{L} \text{ to } \P(\aleph_i^\mathrm{L}).$$ 
There are different ways of formalising these statements that are classically equivalent, but (possibly) differ in the way they are evaluated in a Kripke model. For our purposes, we choose to define the sentence $\psi_i$ like this:
\begin{align*}
    \exists x \exists y \exists g ((x = \aleph_{i+2})^\mathrm{L} & \wedge (y = \aleph_i)^\mathrm{L} \\
    & \wedge g \text{ ``is an injective function'' } \\
    & \wedge \dom(g) = x \\
    & \wedge \forall \alpha \in x \forall z \in g(\alpha) \ z \in y )
\end{align*}
The main reason for this choice of formalisation is that the semantics of the existential quantifier is local, which will allow us to prove the following crucial observation. Note that each sentence $\psi_i$ is a $\Sigma_3$-formula.\footnote{It is clear that the final three conjuncts are $\Delta_0$-formulas. Using \Cref{Prop:L Sigma_1}, it is easy to check that the first two conjuncts are $\Pi_2$-formulas. In conclusion, the resulting formulas $\psi_i$ are $\Sigma_3$-formulas.}

\begin{proposition}\label{Proposition:Vdash iff vDash}
    Let $K$ be a Kripke frame and $\M$ a sound assignment that agrees on $\mathrm{L}$. Then $K(\M), v \Vdash \psi_i$ if and only if $\M_v \vDash \psi_i$, i.e., the sentences $\psi_i$ are evaluated locally.
\end{proposition}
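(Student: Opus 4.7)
The plan is to unfold $\psi_i$ into its existential prefix and conjunctive matrix, and then to verify that each conjunct is evaluated locally, so that the claim reduces to facts already established. First, I would write $\psi_i$ as $\exists x \exists y \exists g \, \bigl( \chi_1 \wedge \chi_2 \wedge \chi_3 \wedge \chi_4 \wedge \chi_5 \bigr)$, where $\chi_1 \equiv (x = \aleph_{i+2})^\mathrm{L}$, $\chi_2 \equiv (y = \aleph_i)^\mathrm{L}$, and $\chi_3, \chi_4, \chi_5$ are the three $\Delta_0$-conjuncts stating, respectively, that $g$ is an injective function, that $\dom(g) = x$, and that $\forall \alpha \in x \forall z \in g(\alpha) \ z \in y$. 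Each of $\chi_3, \chi_4, \chi_5$ is evaluated locally by \Cref{Proposition:Delta_0 locally}, while $\chi_1$ and $\chi_2$ are evaluated locally by \Cref{Proposition:L frame} (this is where the hypothesis that $\M$ agrees on $\mathrm{L}$ is essential). Since both $\Vdash$ and $\vDash$ commute with $\wedge$, the entire matrix $\bigwedge_{k=1}^{5} \chi_k(a,b,c)$ is then evaluated locally for any parameters $a,b,c \in \M_v$.

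Second, I would invoke the semantic clause for existential quantifiers on each side. For the Kripke semantics, $v \Vdash \exists x \, \phi(x)$ holds iff there exists $a \in D_v = \M_v$ with $v \Vdash \phi(a)$; classically, $\M_v \vDash \exists x \, \phi(x)$ iff there exists $a \in \M_v$ with $\M_v \vDash \phi(a)$. Iterating this three times through the prefix $\exists x \exists y \exists g$ and substituting the local evaluation of the matrix in the middle, one obtains $K(\M), v \Vdash \psi_i$ if and only if $\M_v \vDash \psi_i$.

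The only subtlety — and indeed the whole reason $\psi_i$ was formalised with three outer existentials rather than, say, encoding the same content via implications or universal quantifiers — is that existential witnesses are drawn from the \emph{local} domain in both semantics, whereas universals and implications in Kripke semantics quantify over successor nodes and are not evaluated locally in general. I do not expect any serious obstacle: once the formula is organised so that no outer universal or implication blocks the reduction, the argument is essentially a one-to-one matching of semantic clauses, provided one has carefully confirmed that each $\chi_k$ falls under the scope of exactly one of the two locality lemmas.
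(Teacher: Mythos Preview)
Your proposal is correct and follows essentially the same approach as the paper's own proof: peel off the three outer existential quantifiers using the locality of the Kripke clause for $\exists$, and then verify that each of the five conjuncts in the matrix is evaluated locally by appealing to \Cref{Proposition:L frame} for the two $\mathrm{L}$-relativised conjuncts and to \Cref{Proposition:Delta_0 locally} for the three $\Delta_0$-conjuncts. The paper's proof is simply a terser rendition of exactly this argument.
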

\begin{proof}
    This follows from \Cref{Proposition:L frame}, \Cref{Proposition:Delta_0 locally} and the fact that the semantics of the existential quantifier is local, i.e., the sets $x$, $y$ and $g$ of the above statement must (or may not) be found within $\M_v$. In this situation, it suffices to argue that the following conjunction is evaluated locally:
    \begin{align*}
        (x = \aleph_{i+2})^\mathrm{L}  \wedge (y = \aleph_i)^\mathrm{L} & \wedge g \text{ ``is an injective function'' } \\
    & \wedge \dom(g) = x \\
    & \wedge \forall \alpha \in x \forall z \in g(\alpha) \ z \in y .
    \end{align*}
    It suffices to argue that every conjunct is evaluated locally. For the first two conjuncts of the form $\phi^\mathrm{L}$ this holds by \Cref{Proposition:L frame}. The final three conjuncts are $\Delta_0$-formulas. So we can apply \Cref{Proposition:Delta_0 locally} and the desired result follows.
\end{proof}

We will now obtain a collection of models of set theory using the forcing notions from S. Friedman, Fuchino and Sakai in \cite{FFS17}. From this collection, we define models with classical domains by constructing sound assignments that agree on $\mathrm{L}$. 

\begin{construction}
We begin by setting up the forcing construction. By our assumption that there is a countable transitive model of set theory, we can choose a minimal countable ordinal $\alpha$ such that $\mathrm{L}_\alpha$ is a model of $\ZFC + V = \mathrm{L}$. We fix this $\alpha$ for the rest of the article. Let $\mathbb{Q}_{\beta,n}$ be the forcing notion\footnote{The notation $\Fn(I,J,\lambda)$ is introduced by Kunen in \cite[Definition 6.1]{Kunen1980} and denotes the set of all partial functions $p:I \to J$ of cardinality less than $\lambda$ ordered by reversed inclusion.} $\Fn(\aleph_{\beta+n+2}^\mathrm{L},2,\aleph_{\beta + n}^\mathrm{L})$, defined within $\mathrm{L}_\alpha$. Given $A \subseteq \omega$, we define the following forcings:
 $$\mathbb{P}^A_{\beta,n} = \begin{cases}
    \mathbb{Q}_{\beta,n}, & \text{ if } n \in A, \\
    \mathbbm{1}, &\text{ otherwise.}
    \end{cases}$$
Then let $\mathbb{P}_\beta^A = \prod_{n < \omega} \mathbb{P}^A_{\beta,n}$ be the full support product of the forcing notions $\mathbb{P}^A_{\beta,n}$. Recall that the ordering $<$ on $\mathbb{P}_\beta^A$ is defined by $(a_i)_{i \in \omega} < (b_i)_{i \in \omega}$ if and only if $a_i <_i b_i$ for all $i \in \omega$. Now, let $G_\beta$ be $\mathbb{P}_\beta^\omega$-generic over $\mathrm{L}$, and let $G_{\beta,n} = \pi_n[G]$ be the $n$-th projection of $G_\beta$. Let $H$ be the trivial generic filter on the trivial forcing $\mathbbm{1}$. Now, for $A \subseteq \omega$ and $n \in \omega$ define the collection of filters:
$$
    G^A_{\beta,n} = \begin{cases}
        G_{\beta,n}, & \text{ if } n \in A, \\
        H, &\text{ otherwise,}
    \end{cases}
$$ and let $G_\beta^A = \prod_{n<\omega} G^A_{\beta,n}$.
\end{construction}

\begin{proposition}
    The filter $G_\beta^A$ is $\mathbb{P}_\beta^A$-generic over $\mathrm{L}_\alpha$. \qed
\end{proposition}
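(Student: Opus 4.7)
The plan is to derive the genericity of $G_\beta^A$ from that of $G_\beta$ by exploiting the natural isomorphism $\mathbb{P}_\beta^\omega \cong \mathbb{P}_\beta^A \times \mathbb{P}_\beta^{\omega \setminus A}$, which makes $\mathbb{P}_\beta^A$ a regular suborder of $\mathbb{P}_\beta^\omega$. The proposition then becomes an instance of the standard product lemma: the projection of a product-generic filter onto either factor is generic for that factor.

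Concretely, I would first verify that $G_\beta^A$ is a filter on $\mathbb{P}_\beta^A$. This is immediate coordinate-wise: upward closure and downward directedness follow from the corresponding properties of each $G_{\beta,n}$ (for $n \in A$) and of the trivial filter $H$ (for $n \notin A$). For genericity, given a dense set $D \subseteq \mathbb{P}_\beta^A$ lying in $\mathrm{L}_\alpha$, I would lift it to a set $\tilde D \subseteq \mathbb{P}_\beta^\omega$ consisting of all $(p_n)_n$ whose $A$-restriction (the sequence equal to $p_n$ for $n \in A$ and to $\mathbbm{1}$ elsewhere) lies in $D$. Density of $\tilde D$ in $\mathbb{P}_\beta^\omega$ is routine: given any condition $(p_n)_n$, take its $A$-restriction, extend it within $D$, and reassemble with the unchanged coordinates outside $A$. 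By the genericity of $G_\beta$ over $\mathrm{L}_\alpha$, there is some $(p_n)_n \in G_\beta \cap \tilde D$; by construction of $\tilde D$, its $A$-restriction lies in $D$, and since $G_{\beta,n} = \pi_n[G_\beta]$ for every $n \in A$, this restriction belongs to $G_\beta^A \cap D$.

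No serious obstacle is expected. The only point to handle carefully is the definability of the lifted set $\tilde D$ inside $\mathrm{L}_\alpha$, which holds because both $D$ and the $A$-restriction operation are elements of $\mathrm{L}_\alpha$. Since $\mathbb{P}_\beta^\omega$ is a full-support product, the reassembled condition automatically belongs to $\mathbb{P}_\beta^\omega$, so the lifting argument goes through without complication.
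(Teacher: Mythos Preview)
Your argument is correct and is exactly the standard product-forcing projection argument one would expect here. The paper itself gives no proof at all: the proposition is stated with an immediate \qed, treating it as a routine consequence of the product lemma, which is precisely what you have spelled out. One small remark: your step ``$\tilde D \in \mathrm{L}_\alpha$'' tacitly uses $A \in \mathrm{L}_\alpha$; this is implicit in the paper's setup (the forcings $\mathbb{P}_\beta^A$ are constructed inside $\mathrm{L}_\alpha$, and in all applications $A = f(v) \in \mathrm{L}_\alpha$), so there is no gap.
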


\begin{proposition}\label{Proposition:L[G^A] subseteq L[G^B]}
    If $A \subseteq B \subseteq \omega$ and $A \in \mathrm{L}[G_\beta^B]$, then $\mathrm{L}[G_\beta^A] \subseteq \mathrm{L}[G_\beta^B]$. Indeed, $\mathrm{L}[G_\beta^A]$ is an inner model of $\mathrm{L}[G_\beta^B]$. \qed
\end{proposition}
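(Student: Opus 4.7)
The plan is to locate the filter $G_\beta^A$ inside $\mathrm{L}[G_\beta^B]$ explicitly, and then invoke the standard theory of relative constructibility. The key observation is that the hypothesis $A \subseteq B$ makes the coordinate filters of $G_\beta^A$ uniformly retrievable from those of $G_\beta^B$: for each $n \in A$, we have $n \in B$, hence $G^A_{\beta,n} = G_{\beta,n} = G^B_{\beta,n} = \pi_n[G_\beta^B]$, while for each $n \notin A$ we have $G^A_{\beta,n} = H$, the canonical trivial filter on the trivial forcing. Both cases are definable inside $\mathrm{L}[G_\beta^B]$ uniformly in $n$ and the parameters $A$ and $G_\beta^B$, so the product $G_\beta^A = \prod_{n < \omega} G^A_{\beta,n}$ lies in $\mathrm{L}[G_\beta^B]$. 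This is the only step that actually uses the hypothesis $A \in \mathrm{L}[G_\beta^B]$: without access to $A$ inside $\mathrm{L}[G_\beta^B]$ one cannot identify which coordinates of $G_\beta^B$ are to be kept and which are to be collapsed to the trivial filter.

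From $G_\beta^A \in \mathrm{L}[G_\beta^B]$ the inclusion $\mathrm{L}[G_\beta^A] \subseteq \mathrm{L}[G_\beta^B]$ is immediate, since $\mathrm{L}[G_\beta^A]$ is the minimal inner model containing $\mathrm{L}$ together with the set $G_\beta^A$, both of which are already present in $\mathrm{L}[G_\beta^B]$. To upgrade the inclusion to the inner-model statement in the sense defined earlier in the excerpt, I would verify the remaining three clauses as standard consequences of the same fact: $\mathrm{L}[G_\beta^A]$ is defined in $\mathrm{L}[G_\beta^B]$ by the usual formula for relative constructibility with parameter $G_\beta^A$, hence is a transitive definable subclass of $\mathrm{L}[G_\beta^B]$; $\mathrm{L}[G_\beta^A]$ satisfies $\ZFC$ by the standard relativised constructibility argument; and $\mathrm{L}[G_\beta^A] \supseteq \mathrm{L}$ contains $\mathrm{Ord}$, which agrees with the ordinals of $\mathrm{L}[G_\beta^B]$ since forcing over $\mathrm{L}$ adds no new ordinals. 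I do not expect any genuine obstacle beyond the explicit construction of $G_\beta^A$ in the first paragraph; everything else is bookkeeping from relative constructibility.
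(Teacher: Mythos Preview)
Your argument is correct. The paper itself gives no proof at all---the proposition is stated with a bare \qed{} and treated as a routine forcing/relative-constructibility fact---so your explicit reconstruction of $G_\beta^A$ inside $\mathrm{L}[G_\beta^B]$ from the parameters $A$ and $G_\beta^B$, followed by the appeal to minimality of $\mathrm{L}[\,\cdot\,]$ and the standard inner-model verifications, is exactly the kind of unpacking the authors leave to the reader.
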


The additional assumption $A \in \mathrm{L}[G^B]$ is necessary because there are forcing extensions that cannot be amalgamated (see \cite[Observation 35]{FuchsHamkinsReitz} for a discussion of this). The following generalised proposition of S. Friedman, Fuchino and Sakai is crucial for our purposes.\footnote{In different terminology, the statement of the following proposition is that the sentences $\psi_i$ constitute a family of so-called \emph{independent buttons} for set-theoretical forcing. This terminology originates from the modal logic of forcing, see the article \cite{HamkinsLoewe2008} of Hamkins and Löwe.}

\begin{proposition}[S. Friedman, Fuchino and Sakai, {\cite[Proposition 5.1]{FFS17}}]\label{Proposition:LGA buttons}
    Let $\beta$ be an ordinal, $i \in \omega$ and $A \subseteq \omega$. Then $\mathrm{L}_\alpha[G_\beta^A] \vDash \psi_{\beta + i}$ if and only if $i \in A$. \qed
\end{proposition}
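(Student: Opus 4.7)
The plan is to prove the equivalence by tracking how the product forcing $\mathbb{P}_\beta^A$ affects the $\aleph$-hierarchy of $\mathrm{L}$. The ground observation is that in $\mathrm{L}_\alpha$, $\GCH$ holds, so $|\mathcal{P}(\aleph_{\beta+i}^\mathrm{L})|^{\mathrm{L}_\alpha} = \aleph_{\beta+i+1}^\mathrm{L} < \aleph_{\beta+i+2}^\mathrm{L}$; in particular $\mathrm{L}_\alpha \vDash \neg \psi_{\beta+i}$. The task is therefore to show that the forcing $\mathbb{P}_\beta^A$ flips this statement precisely at coordinates $n \in A$.

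For the direction $i \in A \Rightarrow \mathrm{L}_\alpha[G_\beta^A] \vDash \psi_{\beta+i}$, I would isolate the $i$-th factor: by the product lemma, the projection $G_{\beta,i}$ is $\mathbb{Q}_{\beta,i}$-generic over $\mathrm{L}_\alpha$. Since $\mathbb{Q}_{\beta,i} = \Fn(\aleph_{\beta+i+2}^\mathrm{L}, 2, \aleph_{\beta+i}^\mathrm{L})$ is designed to blow up $2^{\aleph_{\beta+i}^\mathrm{L}}$ to at least $\aleph_{\beta+i+2}^\mathrm{L}$, a standard density argument extracts from $G_{\beta,i}$ a sequence $\langle x_\gamma \mid \gamma < \aleph_{\beta+i+2}^\mathrm{L}\rangle$ of pairwise distinct subsets of $\aleph_{\beta+i}^\mathrm{L}$ (distinctness follows from the density of conditions forcing any two ``slices'' to disagree at some point). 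The map $\gamma \mapsto x_\gamma$ then lives in $\mathrm{L}_\alpha[G_{\beta,i}] \subseteq \mathrm{L}_\alpha[G_\beta^A]$ and witnesses $\psi_{\beta+i}$.

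For the converse $i \notin A \Rightarrow \mathrm{L}_\alpha[G_\beta^A] \vDash \neg \psi_{\beta+i}$, the plan is a two-step factorisation of $\mathbb{P}_\beta^A$ as $\mathbb{P}^{<i} \times \mathbb{P}^{>i}$ along the coordinates $n < i$ and $n > i$ in $A$ (the coordinate $n = i$ being trivial since $i \notin A$). Force first with $\mathbb{P}^{>i}$: each $\mathbb{Q}_{\beta,n}$ for $n \geq i+1$ is $\aleph_{\beta+n}^\mathrm{L}$-closed, so a full-support product of such forcings remains $\aleph_{\beta+i+1}^\mathrm{L}$-closed. This adds no new subsets of $\aleph_{\beta+i}^\mathrm{L}$, preserves all cardinals up to $\aleph_{\beta+i+1}^\mathrm{L}$, and in particular preserves $\GCH$ at $\aleph_{\beta+i}^\mathrm{L}$. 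Next, force with $\mathbb{P}^{<i}$: this is a finite product of forcings whose cardinalities in the intermediate model are bounded by $\aleph_{\beta+i+1}^\mathrm{L}$ (using $\GCH$), hence $|\mathbb{P}^{<i}| \leq \aleph_{\beta+i+1}^\mathrm{L}$ and $\mathbb{P}^{<i}$ has the $(\aleph_{\beta+i+2}^\mathrm{L})$-c.c. Such a forcing preserves $\aleph_{\beta+i+2}^\mathrm{L}$ and introduces at most $(\aleph_{\beta+i+1}^\mathrm{L})^{\aleph_{\beta+i}^\mathrm{L}} = \aleph_{\beta+i+1}^\mathrm{L}$ new subsets of $\aleph_{\beta+i}^\mathrm{L}$. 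Consequently, $|\mathcal{P}(\aleph_{\beta+i}^\mathrm{L})|^{\mathrm{L}_\alpha[G_\beta^A]} = \aleph_{\beta+i+1}^\mathrm{L} < \aleph_{\beta+i+2}^\mathrm{L}$, ruling out the injection demanded by $\psi_{\beta+i}$.

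The main obstacle will be the cardinal-arithmetic bookkeeping in the converse direction: verifying that full-support products of sufficiently closed forcings remain closed, that $\GCH$ at $\aleph_{\beta+i}^\mathrm{L}$ survives the closed-forcing step, and that the ``below-$i$'' factor neither collapses $\aleph_{\beta+i+2}^\mathrm{L}$ nor introduces too many new subsets of $\aleph_{\beta+i}^\mathrm{L}$ after iteration with $\mathbb{P}^{>i}$. Once this standard ``closed then chain-conditioned'' pattern is set up correctly, the clean separation into the two factors does the rest.
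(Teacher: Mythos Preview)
Your sketch is essentially the correct Easton-style argument, and it goes well beyond what the paper actually does: the paper gives no proof at all, merely citing \cite{FFS17} for the case $\beta = 0$ and remarking that the general $\beta$ is handled identically. So in that sense you have supplied the content the paper outsources.

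One technical point in your converse direction deserves attention. You argue that $\mathbb{P}^{>i}$ is $\aleph_{\beta+i+1}^{\mathrm L}$-closed and hence preserves cardinals up to $\aleph_{\beta+i+1}^{\mathrm L}$ and adds no subsets of $\aleph_{\beta+i}^{\mathrm L}$; then you argue that $\mathbb{P}^{<i}$, having size at most $\aleph_{\beta+i+1}^{\mathrm L}$, preserves $\aleph_{\beta+i+2}^{\mathrm L}$. But for the final inequality $|\mathcal P(\aleph_{\beta+i}^{\mathrm L})| < \aleph_{\beta+i+2}^{\mathrm L}$ to block the injection, you also need $\aleph_{\beta+i+2}^{\mathrm L}$ to survive the \emph{first} step, and $\aleph_{\beta+i+1}^{\mathrm L}$-closure alone does not give that. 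Concretely, if $i+1 \in A$ then the factor $\mathbb{Q}_{\beta,i+1} = \Fn(\aleph_{\beta+i+3}^{\mathrm L},2,\aleph_{\beta+i+1}^{\mathrm L})$ sits inside $\mathbb{P}^{>i}$, is only $\aleph_{\beta+i+1}^{\mathrm L}$-closed, and you have not yet said why the full product does not collapse $\aleph_{\beta+i+2}^{\mathrm L}$. The fix is to split once more: write $\mathbb{P}^{>i}$ as $\mathbb{Q}_{\beta,i+1} \times \mathbb{P}^{\geq i+2}$ (the first factor trivial if $i+1 \notin A$), observe that $\mathbb{P}^{\geq i+2}$ is $\aleph_{\beta+i+2}^{\mathrm L}$-closed while $\mathbb{Q}_{\beta,i+1}$ has the $\aleph_{\beta+i+2}^{\mathrm L}$-c.c.\ under $\GCH$, and invoke Easton's lemma to conclude that their product preserves $\aleph_{\beta+i+2}^{\mathrm L}$. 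With that adjustment your plan goes through cleanly; this is exactly the ``closed then chain-conditioned'' bookkeeping you anticipated, just applied one level higher than you indicated.
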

\begin{proof}
    S. Friedman, Fuchino and Sakai prove this proposition for the case $\beta = 0$. The generalised version can be proved in exactly the same way.
\end{proof}

This concludes our preparatory work, and we can state our main technical tool of this section as the following theorem.

\begin{theorem}
    \label{Theorem: Main Tool}
    Let $\beta < \alpha$ be an ordinal, $(K,\leq)$ be a Kripke frame and $f: K \to \P(\omega)$ be a monotone function such that $f(v) \in \mathrm{L}_\alpha$ for all $v \in K$. Then there is a sound assignment $\mathcal{M}$ that agrees on $\mathrm{L}_\alpha$ such that $K(\mathcal{M}), v \Vdash \psi_i$ if and only if there is $j \in f(v)$ such that $i = \beta + j$.
\end{theorem}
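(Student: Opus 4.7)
The plan is to realise the given monotone function $f$ by simply assigning to each node $v$ the forcing extension of $\mathrm{L}_\alpha$ by the generic filter indexed by $f(v)$. Concretely, I would define
\[
\mathcal{M}_v \;:=\; \mathrm{L}_\alpha[G_\beta^{f(v)}]
\]
for every $v \in K$. The entire proof then reduces to checking that this assignment fits the earlier framework and invoking the preparatory propositions from this subsection.

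First I would verify that $\mathcal{M}$ is a sound assignment. Given $v \leq w$, monotonicity of $f$ gives $f(v) \subseteq f(w)$, and by hypothesis $f(v) \in \mathrm{L}_\alpha \subseteq \mathrm{L}_\alpha[G_\beta^{f(w)}]$, so \Cref{Proposition:L[G^A] subseteq L[G^B]} applies and yields $\mathcal{M}_v \subseteq \mathcal{M}_w$ as inner models. Since both $\mathcal{M}_v$ and $\mathcal{M}_w$ are transitive classes, the preservation of $x \in y$ and $x = y$ is automatic. Next, I would argue that $\mathcal{M}$ agrees on $\mathrm{L}$: take the witness $N = \mathrm{L}_\alpha$, which by the choice of $\alpha$ is a transitive model of $\mathsf{ZFC} + V = \mathrm{L}$; it is contained in every $\mathcal{M}_v$ (forcing does not shrink the ground model) and contains all the ordinals of $\mathcal{M}_v$ (forcing does not add ordinals), hence $\mathrm{L}_\alpha$ is an inner model of each $\mathcal{M}_v$.

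Finally I would establish the characterisation of $\psi_i$. Since $\mathcal{M}$ is a sound assignment agreeing on $\mathrm{L}$, \Cref{Proposition:Vdash iff vDash} applies and yields
\[
K(\mathcal{M}), v \Vdash \psi_i \;\iff\; \mathcal{M}_v \vDash \psi_i
\;\iff\; \mathrm{L}_\alpha[G_\beta^{f(v)}] \vDash \psi_i.
\]
Writing $i = \beta + j$ with $j \in \omega$, \Cref{Proposition:LGA buttons} gives $\mathrm{L}_\alpha[G_\beta^{f(v)}] \vDash \psi_{\beta + j}$ if and only if $j \in f(v)$, which is exactly the required equivalence.

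There is no real obstacle here: the hypotheses of the theorem have been engineered precisely to make this construction work. Monotonicity of $f$ is what supplies $\mathcal{M}_v \subseteq \mathcal{M}_w$, the condition $f(v) \in \mathrm{L}_\alpha$ supplies the amalgamation hypothesis of \Cref{Proposition:L[G^A] subseteq L[G^B]}, and the bound $\beta < \alpha$ ensures that the forcing notions $\mathbb{Q}_{\beta+n,n}$ can be set up inside the fixed countable model $\mathrm{L}_\alpha$. Once soundness and agreement on $\mathrm{L}$ have been cleared, the forcing computation contained in \Cref{Proposition:LGA buttons} does all the remaining work.
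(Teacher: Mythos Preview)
Your proposal is correct and follows exactly the paper's approach: define $\mathcal{M}_v = \mathrm{L}_\alpha[G_\beta^{f(v)}]$, invoke \Cref{Proposition:L[G^A] subseteq L[G^B]} for soundness, and combine \Cref{Proposition:Vdash iff vDash} with \Cref{Proposition:LGA buttons} for the characterisation of $\psi_i$. You have in fact spelled out more detail than the paper does (the role of monotonicity, the verification that $\mathrm{L}_\alpha$ witnesses agreement on $\mathrm{L}$, and how each hypothesis is used), but the argument is the same.
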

\begin{proof}
    Let $(K,\leq)$ be a Kripke frame and $f: K \to \P(\omega)$ be a function such that $f(v) \in \mathrm{L}_\alpha$ for all $v \in K$. Let $\mathcal{M}_v = \mathrm{L}_\alpha[G_\beta^{f(v)}]$. This is a well-defined sound assignment that agrees on $\mathrm{L}$ by \Cref{Proposition:L[G^A] subseteq L[G^B]}. By \Cref{Proposition:LGA buttons}, it holds that $i \in f(v)$ if and only if $\mathcal{M}_v \vDash \psi_i$. \Cref{Proposition:Vdash iff vDash} implies that the latter is equivalent to $K(\mathcal{M}),v \Vdash \psi_i$. The result follows.
\end{proof}

\subsection{Propositional Logics and IKP}
\label{Subsection: Propositional Logics and IKP}

We are now ready to prove a rather general result on the logics for which $\IKP$ satisfies the de Jongh property.\footnote{The results in this section are based on the third chapter of the second author's master's thesis \cite{Passmann2018}, supervised by Benedikt Löwe at the University of Amsterdam.} The essential idea is to transform a Kripke model for propositional logic into a Kripke model for set theory in such a way that the models exhibit very similar logical properties. In particular, if the logical model does not force a certain formula $\phi$, then we will construct a set-theoretic model and a translation $\tau$ such that the set-theoretic model will not force $\phi^\tau$.

Recall that an intermediate logic $J$ is called \emph{Kripke-complete} if there is a class of Kripke frames $C$ such that $J = \Logic(C)$.

\begin{theorem}
    Let $T \subseteq \IKP^+ + \MP + \AC$ be a set theory. If $J$ is a Kripke-complete intermediate propositional logic, then $\Logic^{\Sigma_3}(T(J)) = J$.
\end{theorem}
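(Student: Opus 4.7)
The inclusion $J \subseteq \Logic^{\Sigma_3}(T(J))$ is immediate: by definition, $T(J)$ contains $\phi^\sigma$ for every $\phi \in J$ and every propositional translation $\sigma$, hence in particular for every $\Sigma_3$-translation. The content lies in the reverse inclusion, which I would establish by contraposition.

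Assume $\phi \notin J$. Since $J$ is Kripke-complete, fix a class $C$ of frames with $\Logic(C) = J$ and choose $(K,\leq) \in C$, a valuation $V$ on $K$ and a node $v_0 \in K$ such that $K, V, v_0 \not\Vdash \phi$. Enumerate the propositional variables of $\phi$ as $p_0,\ldots,p_{n-1}$ and define $f: K \to \P(\omega)$ by $f(v) = \Set{i < n}{v \in V(p_i)}$. This $f$ is monotone by persistence of $V$, and each $f(v)$ is a finite (hence hereditarily finite) subset of $\omega$, so $f(v) \in \mathrm{L}_\alpha$. Apply \Cref{Theorem: Main Tool} with $\beta = 0$ to obtain a sound assignment $\M$ agreeing on $\mathrm{L}$ (with $\M_v = \mathrm{L}_\alpha[G_0^{f(v)}]$) such that $K(\M), v \Vdash \psi_i$ iff $v \in V(p_i)$ for each $i < n$. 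Define the translation $\sigma$ by $\sigma(p_i) = \psi_i$ for $i < n$ and, say, $\sigma(q) = \top$ for any other propositional letter $q$; since each $\psi_i$ is $\Sigma_3$, this is a $\Sigma_3$-translation.

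The central technical step is then the following mutual simulation, proved by a straightforward induction on propositional formulas $\chi$ in the variables $p_0, \ldots, p_{n-1}$ and simultaneously for all $v \in K$:
\begin{equation*}
K(\M), v \Vdash \chi^\sigma \iff K, V, v \Vdash \chi.
\end{equation*}
The atomic case is the conclusion of \Cref{Theorem: Main Tool}, and the inductive steps for $\wedge$, $\vee$, $\rightarrow$, $\bot$ go through because the forcing clauses in $K$ and in $K(\M)$ are identical in form, with monotonicity/persistence available on both sides. Applied to $\phi$ at $v_0$, this yields $K(\M), v_0 \not\Vdash \phi^\sigma$.

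It remains to verify $K(\M) \Vdash T(J)$, for then $T(J) \not\vdash \phi^\sigma$ and $\phi \notin \Logic^{\Sigma_3}(T(J))$. The inclusion $T \subseteq \IKP^+ + \MP + \AC$, together with the fact that each $\M_v$ is a forcing extension of $\mathrm{L}_\alpha$ and hence a model of $\ZFC$, gives $K(\M) \Vdash T$ by the corollary summarising the preceding subsection. For the $J$-axioms: given any propositional translation $\tau: \Prop \to \mathcal{L}_\in^{\sent}$, the map $V_\tau(p) = \Set{v \in K}{K(\M), v \Vdash \tau(p)}$ is a persistent valuation on $K$ by persistence in $K(\M)$, and the same induction as above (carried out for $\tau$ in place of $\sigma$) gives $K, V_\tau, v \Vdash A \iff K(\M), v \Vdash A^\tau$ for every propositional formula $A$. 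Since $K$ validates $J$, every $A \in J$ holds under $V_\tau$ at every node, whence $K(\M) \Vdash A^\tau$. The main obstacle I anticipate is precisely this last verification — matching the valuation on the underlying frame $K$ to an arbitrary set-theoretic translation — but persistence in Kripke models with classical domains (\Cref{Proposition:Iemhoff persistence}) makes the reduction clean. The rest is bookkeeping, using finiteness of the variable set of $\phi$ to ensure $f(v) \in \mathrm{L}_\alpha$ without any assumption on the size of $K$.
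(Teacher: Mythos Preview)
The proposal is correct and takes essentially the same approach as the paper's proof: choose a frame from the characterising class on which $\phi$ fails, code the relevant valuation via the finite sets $f(v)$, apply \Cref{Theorem: Main Tool}, and use the translation $p_i \mapsto \psi_i$. Your explicit verification that $K(\mathcal{M}) \Vdash T(J)$ via the induced valuation $V_\tau$ on the underlying frame is in fact more thorough than the paper, which simply asserts this step.
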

\begin{proof}
    The inclusion from right to left follows directly from the definition of $T(J)$. We show the converse inclusion by contraposition. So assume that there is a formula $\phi$ in the language of propositional logic such that $J \not \vdash \phi$. By our assumption that $J$ is Kripke-complete, there is a Kripke model $(K,\leq,V)$ such that $(K,\leq,V) \Vdash J$ but $(K,\leq,V) \not \Vdash \phi$. Without loss of generality, we can assume that the propositional letters appearing in $\phi$ are $p_0,\dots,p_n$. We define a function $f: K \to \P(\mathbb{N})$ by stipulating that:
    $$
        i \in f(v) \text{ if and only if } i \leq n \text{ and } (K,\leq,V), v \Vdash p_i.
    $$
    In particular, $f(v)$ is finite and thus $f(v) \in \mathrm{L}$ for every $v \in K$. Apply \Cref{Theorem: Main Tool} to get a sound assignment $\mathcal{M}$ that agrees on $\mathcal{L}$ such that $K(\mathcal{M}), v \Vdash \psi_i$ if and only if $i \in f(v)$. 

    Let $\sigma: \Prop \to \mathcal{L}_\in^\sent$ be the map $p_i \mapsto \psi_i$. It follows via an easy induction on propositional formulas that $K(\mathcal{M}), v \Vdash \chi^\sigma$ if and only if $(K,\leq,V),v \Vdash \chi$. In particular, $K(\mathcal{M}) \not \Vdash \phi^\sigma$ but $K(\mathcal{M}) \Vdash T(J)$. Hence, $\phi \notin \Logic(T(J))$.
\end{proof}

\begin{corollary}
    \label{Corollary: Propositional Logic IKP}
    Every set theory $T \subseteq \IKP^+ + \MP + \AC$ has the de Jongh property with respect to every Kripke-complete intermediate propositional logic $J$, i.e., $\Logic(T(J)) = J$. \qed
\end{corollary}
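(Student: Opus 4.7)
The plan is to derive the corollary as an immediate consequence of the preceding theorem by combining two easy inclusions, with no further set-theoretic construction required. First I would verify that $J \subseteq \Logic(T(J))$: by the very definition of $T(J)$, every formula $A \in J$ together with every propositional translation $\sigma$ contributes the sentence $A^\sigma$ as an axiom of $T(J)$, so trivially $T(J) \vdash A^\sigma$ for all such $\sigma$, which witnesses $A \in \Logic(T(J))$. This direction is purely formal and uses nothing about $\IKP^+$, $\MP$, or $\AC$.

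For the reverse inclusion, I would observe that the class of $\Sigma_3$-translations is a subclass of the class of all propositional translations, so the defining universal condition ``$T(J) \vdash \phi^\sigma$ for every $\sigma$'' is stronger than the same condition restricted to $\sigma$ with $\ran(\sigma) \subseteq \Sigma_3$. Hence $\Logic(T(J)) \subseteq \Logic^{\Sigma_3}(T(J))$, and by the preceding theorem the right-hand side equals $J$. Chaining these inclusions yields the desired equality $\Logic(T(J)) = J$. I expect no substantial obstacle: the genuine work has already been carried out in the theorem via \Cref{Theorem: Main Tool} and the family of $\Sigma_3$-sentences $\psi_i$, and the corollary merely records that since every translation refuting $\phi \notin J$ can be taken with $\Sigma_3$-range, the a priori larger logic $\Logic(T(J))$ collapses to the same bound $J$.
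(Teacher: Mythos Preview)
Your proposal is correct and matches the paper's approach: the paper marks the corollary with \qed\ and gives no separate argument, treating it as immediate from the preceding theorem, and what you have written is exactly the unpacking of that immediacy via the chain $J \subseteq \Logic(T(J)) \subseteq \Logic^{\Sigma_3}(T(J)) = J$.
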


De Jongh, Verbrugge and Visser \cite{deJongh2011} proved a similar result for Heyting arithmetic $\HA$, namely, that $\Logic(\HA(J)) = J$ holds for every intermediate propositional logic $J$ which possesses the finite frame property. Passmann \cite{Passmann2020} showed that $\Logic(\IZF(J)) = J$ holds for every intermediate logic $J$ that is complete with respect to a class of finite trees. Our present \Cref{Corollary: Propositional Logic IKP}, however, applies to a much broader class of logics: all intermediate logics that are complete with respect to a class of Kripke frames.

\subsection{The Relative First-Order Logic of IKP}

When it comes to first-order logics, several intricacies arise that concern the interplay of the logics and the surrounding set theory. We were able to ignore these intricacies in the previous section when we were dealing with propositional logics because we effectively reduced the problem to finitely many propositional letters. In the case of first-order logic, however, we need to deal with infinite domains and predication. 

The basic idea remains the same: We will construct a set-theoretical model based on a Kripke model for first-order logic. This time, however, we also need to deal with domains and predication. We will see that working with relative interpretations allows us to easily adapt the proof of the previous section for our purposes here: We will use the statements $\psi_i$ to code domains of Kripke models for $\IQC$ as subsets of $\omega$ as well as coding which predications hold true.

\begin{theorem}
    \label{Theorem: L_alpha first-order Existence de Jongh property}
    Let $T \subseteq \IKP^+ + \MP + \AC$ be a set theory. If $J \in \mathrm{L}_\alpha$ is an intermediate first-order logic such that $\mathrm{L}_\alpha \vDash$ ``$J$ is a Kripke-complete logic in a countable language'', then $\QLogic^{\Sigma_3}_E(T(J)) = J$.
\end{theorem}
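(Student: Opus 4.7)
The forward inclusion $J \subseteq \QLogic^{\Sigma_3}_E(T(J))$ follows from the definition of $T(J)$ together with standard properties of relative translations, so I focus on the converse. Suppose $\phi \notin J$; I aim to produce a $\Sigma_3$-translation $\sigma$ and a Kripke model with classical domains $K(\M)$ such that $K(\M) \Vdash T(J)$ while $K(\M) \not\Vdash (\phi^E)^\sigma$. Since $\mathrm{L}_\alpha \vDash$ ``$J$ is Kripke-complete in a countable language'' and $\phi \notin J$, there exists inside $\mathrm{L}_\alpha$ a Kripke frame $(K,\leq)$ belonging to the characterising class of $J$ together with a first-order Kripke model $M = (K,\leq,D,V)$ based on $(K,\leq)$ with $M \not\Vdash \phi$. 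As $\mathrm{L}_\alpha$ is countable externally, so is $M$.

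Working inside $\mathrm{L}_\alpha$, fix an injection $c \colon \bigcup_{v \in K} D_v \to \omega$ and primitive recursive coding functions $\eta$ and $\rho$ with $\Delta_0$-graphs that assign pairwise disjoint natural-number codes to the candidate domain elements and to the candidate atomic facts of $M$. Define $f \colon K \to \P(\omega)$ by
\[
f(v) = \Set{\eta(c(a))}{a \in D_v} \cup \Set{\rho(R, c(a_1), \ldots, c(a_n))}{(a_1,\ldots,a_n) \in V_v(R)}.
\]
Persistence in $M$ makes $f$ monotone, and each $f(v)$ lies in $\mathrm{L}_\alpha$. Applying \Cref{Theorem: Main Tool} to $f$ yields a sound assignment $\M$ agreeing on $\mathrm{L}$ with $K(\M), v \Vdash \psi_i$ iff $i \in f(v)$. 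Define the translation $\sigma$ by
\begin{align*}
\sigma(E)(x) &\equiv x \in \omega \wedge \psi(\eta(x)), \\
\sigma(R_j)(x_1,\ldots,x_n) &\equiv x_1,\ldots,x_n \in \omega \wedge \psi(\rho(R_j, x_1, \ldots, x_n)),
\end{align*}
where $\psi(k)$ is the uniform $\Sigma_3$-formula defining the family $\psi_i$; each image of $\sigma$ is then $\Sigma_3$.

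The technical heart is a simultaneous induction on $\mathcal{L}_J$-formulas $\chi$ showing that
\[
K(\M), v \Vdash (\chi^E)^\sigma(c(a_1), \ldots, c(a_k)) \iff M, v \Vdash \chi(a_1, \ldots, a_k)
\]
for all $v \in K$ and $a_1, \ldots, a_k \in D_v$. The atomic case is immediate from \Cref{Proposition:Vdash iff vDash} and the definition of $f$; the propositional connectives follow routinely using persistence on both sides. The crux is the quantifier case: a witness $b \in \M_v$ for $(\exists x (E(x) \wedge \chi^E(x)))^\sigma$ must force $\sigma(E)(b)$, which by \Cref{Proposition:Vdash iff vDash} and the design of $f$ pins $b$ to the form $c(a)$ for some $a \in D_v$; the induction hypothesis then identifies this with a witness for $\exists x \chi(x)$ in $M$. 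The universal case is dual, additionally ranging over $w \geq v$. In particular, $K(\M) \not\Vdash (\phi^E)^\sigma$.

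To finish, I verify $K(\M) \Vdash T(J)$. By \Cref{Theorem:Iemhoff satisfies BCZF} and the subsequent propositions, $K(\M) \Vdash \IKP^+ + \MP + \AC \supseteq T$. For any additional axiom $A^\tau \in T(J)$ with $A \in J$, the first-order translation $\tau$ induces a Kripke model $(K,\leq,\M,V^\tau)$ on the frame $(K,\leq)$ via $V^\tau_v(R) = \Set{\bar a \in \M_v^n}{K(\M), v \Vdash \tau(R)(\bar a)}$; a routine induction yields $K(\M), v \Vdash B^\tau(\bar a) \iff (K,\leq,\M,V^\tau), v \Vdash B(\bar a)$ for every $B \in \mathcal{L}_J$, and since $(K,\leq)$ lies in the characterising class of $J$ the induced model validates $A$, giving $K(\M) \Vdash A^\tau$. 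The main obstacle is carefully orchestrating the inductive equivalence of the previous paragraph, ensuring that $\sigma(E)$ picks out at each node precisely the $c$-image of the current $D_v$, and that the coding choices yield genuinely $\Sigma_3$ translations.
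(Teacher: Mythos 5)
Your proof follows essentially the same route as the paper's: code the domains and the atomic diagram of a countable countermodel $M \in \mathrm{L}_\alpha$ into a monotone $f \colon K \to \P(\omega)$, invoke \Cref{Theorem: Main Tool} to realise $f$ via the sentences $\psi_i$, translate $E$ and the relation symbols to (uniform instances of) these $\Sigma_3$-sentences, and transfer $M \not\Vdash \phi$ to $K(\M) \not\Vdash (\phi^E)^\sigma$ by the inductive equivalence, with the existential witnesses pinned down exactly as in the paper. Your closing verification that $K(\M) \Vdash A^\tau$ for \emph{arbitrary} translations $\tau$ is in fact more explicit than the paper's, which only records $K(\M) \Vdash J^\sigma$ for the constructed translation.
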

\begin{proof}
    Again, the inclusion from right to left is trivial and we prove the other direction by contraposition. 
    
    Let $J \in \mathrm{L}_\alpha$ be a first-order logic such that ``$J$ is Kripke-complete'' holds in $\mathrm{L}_\alpha$. Let $J \not \vdash \phi$ for some first-order sentence $\phi$. We have to find a map $\sigma$ such that $T(J) \not \vdash (\phi^E)^\sigma$.
    
    Work in $\mathrm{L}_\alpha$. By the fact that $J \not \vdash \phi$ and that $J$ is Kripke-complete, we know that there is first-order Kripke model $M = (K,\leq,D,I) \in L_\alpha$ such that $M \not \Vdash \phi$. As we work in a classical meta-theory, we apply the downward Löwenheim-Skolem-Theorem by coding $M$ as first-order structure and assume without loss of generality that $M$ is countable. Fix enumerations $d: \omega \to \bigcup{D}$ of the union of all domains of the model $M$, $C: \omega \to \mathcal{L}_{J}$ of all constant symbols appearing in $\mathcal{L}_J$, $R: \omega \to \mathcal{L}_{J}$ of all relation symbols appearing in $\mathcal{L}_J$, and $F: \omega \to \mathcal{L}_{J}$ of all function symbols appearing in $\mathcal{L}_J$ (in each case, if there are only finitely many symbols, restrict the domain to some $n \in \omega$).
    
    Still working in $\mathrm{L}_\alpha$, we will now code all information about $M$ in sets of natural numbers. Without loss of generality, we can assume that $D_v \subseteq \omega$ for all $v \in K$, and that the transition functions are inclusions. Fix now a map $\seq{\cdot}: \omega^{<\omega} \to \omega$. For $v \in K$, we let $k \in f(v)$ if and only if one of the following cases holds true:
    \begin{enumerate}
        \item $k = \seq{0,j}$ and $j \in D_v$,
        \item $k = \seq{1,i,j_0,\dots,j_{n-1}}$, $R_i$ is an $n$-ary relation symbol, $j_0,\dots,j_{n-1} \in D_v$ and 
            $$v \Vdash R_i(j_0,\dots,j_{n-1}).$$
    \end{enumerate}
	Observe that we have defined a function $f: K \to \P(\omega)$. This $f$ is monotone due to the persistence property of Kripke models. 
    
    Now work in $V$, and apply \Cref{Theorem: Main Tool} to obtain a sounds assignment $\mathcal{M}$ that agrees on $\mathrm{L}$ such that $K(\mathcal{M}),v \Vdash \psi_k$ if and only if $k \in f(v)$. We define a translation $\sigma$:
    \begin{enumerate}
        \item if $\chi = Et$, where $E$ is the predicate of the relative translation, then $(Et)^\sigma = \psi_{\seq{0,t^\sigma}}$, and,
        \item if $R_i(t_0,\dots,t_{n-1})$ is an $n$-ary relation symbol different from the existential predicate $E$, then $R_i(t_0,\dots,t_{n-1})^\sigma = \psi_{\seq{1,i,t_0^\sigma,\dots,t_{n-1}^\sigma}}$.
    \end{enumerate}
    
    Note that the sentences $\psi_i$ are uniformly defined for $i \in \omega$, and therefore the translation $\sigma$ is well-defined. With an easy induction on formulas $\chi$ in the language of $J$ we show that $K(\mathcal{M}), v \Vdash \chi^\sigma$ if and only if $M,v \Vdash \chi$. We can then conclude that $K(\M) \not \Vdash \phi^\sigma$ but $K(\M) \Vdash J^\sigma$, i.e., $\phi \notin \QLogic_E(T)$.
\end{proof}

The following corollary shows that the theorem covers many important cases. Recall that a logic is \emph{axiomatisable} if it has a recursively enumberable axiomatisation.

\begin{corollary}
    \label{Corollary: axiomatisable logics relative case}
	Let $T \subseteq \IKP^+ + \MP + \AC$ be a set theory.
    If $J$ is an axiomatisable intermediate first-order logic that is $\ZFC$-provably Kripke-complete, then $\QLogic_E^{\Sigma_3}(T(J)) = J$.
\end{corollary}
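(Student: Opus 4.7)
The plan is to deduce the corollary directly from the preceding \Cref{Theorem: L_alpha first-order Existence de Jongh property} by verifying its hypotheses. That is, under the assumptions that $J$ is axiomatisable and $\ZFC$-provably Kripke-complete, I need to check that $J \in \mathrm{L}_\alpha$ and that $\mathrm{L}_\alpha \vDash$ ``$J$ is a Kripke-complete logic in a countable language''.

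First I would observe that axiomatisability supplies an r.e.\ set of axioms for $J$, so both the signature of $J$ and the set of $J$-derivable formulas are $\Sigma_1$-definable over $\omega$ after a standard G\"odel coding. Since $\mathrm{L}_\alpha$ is a transitive model of $\ZFC$ containing $\omega$, every such subset of $\omega$ lies in $\mathrm{L}_\alpha$, and by $\Sigma_1$-absoluteness between $\mathrm{L}_\alpha$ and $V$ the object that $\mathrm{L}_\alpha$ computes really is $J$ as seen in $V$. Countability of the language comes for free, since an injection of the signature into $\omega$ is already arithmetical and therefore also belongs to $\mathrm{L}_\alpha$. For Kripke-completeness, I invoke the hypothesis that $\ZFC \vdash$ ``$J$ is Kripke-complete'' (understood relative to the recursive presentation of $J$); applying this proof inside $\mathrm{L}_\alpha$, which satisfies $\ZFC$, immediately yields $\mathrm{L}_\alpha \vDash$ ``$J$ is Kripke-complete''. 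Both hypotheses of the preceding theorem being verified, the conclusion $\QLogic_E^{\Sigma_3}(T(J)) = J$ drops out of that theorem.

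The only place where care is needed, and hence the main obstacle, is the absoluteness step: one must ensure that the symbol ``$J$'' appearing in the $\ZFC$-proof of Kripke-completeness, in the hypothesis $J \in \mathrm{L}_\alpha$ of \Cref{Theorem: L_alpha first-order Existence de Jongh property}, and in the translations considered for $\QLogic_E^{\Sigma_3}(T(J))$ all refer to the same object once evaluated inside $\mathrm{L}_\alpha$. This is guaranteed because axiomatisability pins $J$ down via an arithmetical description that $\mathrm{L}_\alpha$ interprets in the same way as $V$, since both are correct about $\omega$ and its definable subsets. Once this identification is in place, the proof reduces to a one-line application of the previous theorem.
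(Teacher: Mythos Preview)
Your proposal is correct and follows essentially the same approach as the paper: verify that $J \in \mathrm{L}_\alpha$ via its arithmetical definability and then invoke \Cref{Theorem: L_alpha first-order Existence de Jongh property}. The paper's version is terser---it uses Craig's Lemma to upgrade the r.e.\ axiomatisation to a recursive one, notes that recursive sets are $\Delta^0_1$ and hence lie already in $\mathrm{L}_{\omega+2} \subseteq \mathrm{L}_\alpha$, and leaves the Kripke-completeness transfer to $\mathrm{L}_\alpha$ implicit---whereas you spell out the absoluteness considerations more carefully; but the underlying argument is the same.
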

\begin{proof}
    By Craig's Lemma \cite{Craig1953}, we know that axiomatisable logics are recursively axiomatisable, i.e., we can assume without loss of generality that $J$ is a recursive set. As recursive sets are $\Delta^0_1$-definable with parameter $\omega$ (as a coding of Turing machines in arithmetic), it follows that $J \in \mathrm{L}_{\omega+2} \subseteq \mathrm{L}_\alpha$. Hence, we can apply \Cref{Theorem: L_alpha first-order Existence de Jongh property} and derive the desired result.
\end{proof}

\begin{corollary}
    \label{Corollary: Relative First-Order Logic IKP}
    Let $T \subseteq \IKP^+ + \MP + \AC$ be a set theory. The relative first-order logic of $T$ is intuitionistic first-order logic $\IQC$, i.e., $\QLogic_E(T) = \IQC$. In particular, $\QLogic_E(\IKP) = \IQC$. \qed
\end{corollary}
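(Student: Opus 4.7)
The plan is to instantiate Corollary~\ref{Corollary: axiomatisable logics relative case} with $J = \IQC$. The two hypotheses are easily checked: $\IQC$ is axiomatisable by its standard Hilbert-style calculus (hence recursively so), and its $\ZFC$-provable Kripke completeness is precisely the content of Theorem~\ref{Theorem:IQC true in Kripke models}. Applying the corollary therefore yields $\QLogic_E^{\Sigma_3}(T(\IQC)) = \IQC$.

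Next I would observe that $T(\IQC) = T$. Indeed, by definition $T(\IQC)$ is obtained from $T$ by adjoining all sentences of the form $A^\sigma$ for $A \in \IQC$ and first-order translations $\sigma$; but since $T$ is already a theory based on intuitionistic first-order logic, every such sentence is already derivable in $T$. Consequently $\QLogic_E^{\Sigma_3}(T) = \IQC$.

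The inclusion $\QLogic_E(T) \subseteq \QLogic_E^{\Sigma_3}(T)$ is immediate from the definitions, since requiring $T \vdash (\phi^E)^\sigma$ for \emph{every} first-order translation $\sigma$ is at least as restrictive as requiring it only for those $\sigma$ with $\ran(\sigma) \subseteq \Sigma_3$. Combined with the previous step, this yields $\QLogic_E(T) \subseteq \IQC$. For the reverse inclusion $\IQC \subseteq \QLogic_E(T)$ I would invoke soundness: for any $\phi \in \IQC$, a straightforward induction on intuitionistic derivations shows that the relative translation $\phi^E$ remains derivable in intuitionistic first-order logic in the extended language (note that the version of $\IQC$ used here permits the domain to be empty, so no auxiliary assumption $\exists x\,Ex$ is needed), and hence $(\phi^E)^\sigma$ is derivable in $T$ for every $\sigma$.

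Combining the two inclusions gives $\QLogic_E(T) = \IQC$, from which $\QLogic_E(\IKP) = \IQC$ follows as the special case $T = \IKP$. There is no real obstacle in this argument: the substantive work has already been carried out in Theorem~\ref{Theorem: L_alpha first-order Existence de Jongh property} and Corollary~\ref{Corollary: axiomatisable logics relative case}, and the present corollary is just the observation that those results apply to the base logic $\IQC$ itself, together with the trivial comparison between translations restricted to $\Sigma_3$ and unrestricted translations.
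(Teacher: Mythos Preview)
Your proposal is correct and follows the same approach as the paper: the corollary is marked with a bare \qed{} because it is an immediate instantiation of Corollary~\ref{Corollary: axiomatisable logics relative case} at $J = \IQC$, and you spell out exactly this, together with the routine observations that $T(\IQC) = T$ and $\QLogic_E(T) \subseteq \QLogic_E^{\Sigma_3}(T)$. Your parenthetical about empty domains is a slight over-explanation (the paper simply treats the inclusion $\IQC \subseteq \QLogic_E(T)$ as trivial in the proof of Theorem~\ref{Theorem: L_alpha first-order Existence de Jongh property}), but it does no harm.
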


We give a few more examples of logics to which \Cref{Corollary: axiomatisable logics relative case} applies. To this end, note that $\mathrm{KF}$ is the following scheme:
$$
    \neg \neg \forall x (P(x) \vee \neg P(x)).
$$
Moreover, $\mathbf{QHP}_k$ is the first-order logic of frames of depth at most $k$, and $\mathbf{QLC}$ is the first-order logic of linear frames. For more on these logics, we refer the reader to the book of Gabbay, Shehtman and Skvortsov \cite{GabbayShehtmanSkvortsov2009}.

\begin{corollary}
    \label{Corollary: QLogic_E more logics}
    Let $T \subseteq \IKP^+ + \MP + \AC$ be a set theory. It holds that $\QLogic_E^{\Sigma_3}(T(J)) = J$ in case that $J$ is one of $\mathbf{IQC}+\mathrm{KF}$, $\mathbf{QHP}_k$ or $\mathbf{QLC}$.
\end{corollary}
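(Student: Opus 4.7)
The plan is to invoke \Cref{Corollary: axiomatisable logics relative case} separately for each of the three logics $\mathbf{IQC}+\mathrm{KF}$, $\mathbf{QHP}_k$, and $\mathbf{QLC}$. That corollary reduces the desired identity $\QLogic_E^{\Sigma_3}(T(J)) = J$ to two conditions on $J$: that $J$ be axiomatisable (i.e., recursively enumerable) and that $J$ be $\ZFC$-provably Kripke-complete. The proof therefore amounts to checking these two conditions for each of the three specific logics listed.

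Axiomatisability is immediate in all three cases: each of the logics in question is obtained from $\IQC$ by adding a single axiom scheme---namely the $\mathrm{KF}$ scheme, the bounded-depth scheme $bd_k$ (for the fixed natural number $k$), and the G\"odel--Dummett scheme $(\phi \to \psi) \vee (\psi \to \phi)$, respectively. A finite extension of $\IQC$ is recursively enumerable, so the first condition holds in each case.

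For Kripke-completeness, the logics $\mathbf{QHP}_k$ and $\mathbf{QLC}$ are \emph{defined} as the first-order logics of explicit classes of Kripke frames (frames of depth at most $k$, and linear frames, respectively), so one direction of completeness is by definition, and soundness of the axiomatisations given above with respect to those classes is standard. For $\mathbf{IQC}+\mathrm{KF}$, Kripke-completeness with respect to an appropriate class of frames (frames in which every node sees an endpoint) is a classical result. In all three cases the relevant completeness theorem can be found in~\cite{GabbayShehtmanSkvortsov2009} and its proof is carried out in $\ZFC$.

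With both conditions verified in each case, the equalities $\QLogic_E^{\Sigma_3}(T(J)) = J$ follow directly from \Cref{Corollary: axiomatisable logics relative case}. I do not expect a nontrivial obstacle: the proof is essentially a bookkeeping exercise that collates axiomatisation and completeness facts for these three specific logics from the literature and feeds them into the previously established corollary.
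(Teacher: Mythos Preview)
Your proposal is correct and follows essentially the same approach as the paper: both invoke \Cref{Corollary: axiomatisable logics relative case} and then appeal to the completeness theorems for $\mathbf{IQC}+\mathrm{KF}$, $\mathbf{QHP}_k$, and $\mathbf{QLC}$ from \cite{GabbayShehtmanSkvortsov2009}. The paper simply cites the specific theorem numbers (6.3.5, 6.3.8, 6.7.1) rather than sketching the content of those results as you do, but the argument is the same.
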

\begin{proof}
    This follows from \Cref{Corollary: axiomatisable logics relative case} and the respective completeness theorems from \cite{GabbayShehtmanSkvortsov2009} (see \cite[Theorem 6.3.5]{GabbayShehtmanSkvortsov2009} for the completeness of $\mathbf{IQC}+\mathrm{KF}$, \cite[Theorem 6.3.8]{GabbayShehtmanSkvortsov2009} for completeness of $\mathbf{QHP}_k$, and \cite[Theorem 6.7.1]{GabbayShehtmanSkvortsov2009} for completeness of $\mathbf{QLC}$).
\end{proof}

\subsection{The First-Order Logic of IKP}

The most important result of this section is that the first-order logic of $\IKP$ is intuitionistic first-order logic, i.e., $\QLogic(\IKP) = \IQC$. We will show this by generalising the argument of the previous sections. Our first step will be to construct the necessary Kripke models for set theory. 

Our approach in this section will be somewhat different from what we did in the previous two sections. As we now have to deal with unrestricted quantification, we have to give up on the idea of coding directly into the classical models $\mathcal{M}_v$ which propositions or predications must be true at a certain node. Rather, the idea is we will now encode enough information such that the models know internally which predication must hold at which node. We remind the reader that we consider $\IQC$ to be intuitionistic first-order logic \emph{without} equality.

\begin{construction} 
    Recall that we take $\mathrm{L}_\alpha$ to be the least transitive model of $\ZFC + V = L$. Let $(K,\leq,D,V) \in \mathrm{L}_\alpha$ be a well-founded rooted Kripke model for $\IQC$. Work in $\mathrm{L}_\alpha$. By \Cref{Lemma: Kripke Model Extension of Domains} we can assume that there is a rooted well-founded countable Kripke model $(K,\leq,D,V)$ with countably increasing domains. Without loss of generality, we may assume that $D_v \subseteq \omega$ for all $v \in K$, and $D_v \subseteq D_w$ for $v \leq w$. Let $D_v^* = D_v \setminus \bigcup_{w < v} D_w$. We can assume by well-foundedness and shuffling of the domains, if necessary, that for every $x \in \bigcup_{v \in K} D_v$, there is a unique node $v_x \in K$ with $x \in D_{v_x}^*$. As $K$ is countable, we can take an injective function $f: K \to \omega \setminus \set{0}$. Moreover, for every node $v \in K$ let $f_v: \omega \to D_v^*$ be the unique order preserving enumeration of $D_v^*$. Define a function $t: K \times \omega \to \mathcal{P}((\omega \times \omega)^{<\omega})$ such that for every $n$-ary predicate $P$ and $v \in K$:
$$
    t(v,\godel{P}) = \Set{((f_{v_{x_0}}^{-1}(x_0),v_{x_0}),\dots,(f_{v_{x_n}}^{-1}(x_n),v_{x_n}))}{v \Vdash P(x_0,\dots,x_n)}
$$

By $V = L$, there is an ordinal $\gamma$ such that the tuple $(K,\leq,f,t)$ is the $\gamma$-th element in the canonical well-ordering of $L$. Let $F: K \to \P(\omega)$ be the function such that $F(v) = \Set{f(w)}{w \leq v} \cup \set{0}$. 

By \Cref{Theorem: Main Tool}, there is a Kripke model $K(\mathcal{M})$ with classical domains such that $K(\mathcal{M}), v \Vdash \psi_i$ if and only if there is $j \in F(v)$ such that $i = \gamma + j$.
\end{construction}

\begin{definition}
    We call $K(\mathcal{M})$ a \emph{mimic model} of $M = (K,\leq,D,V)$. Further, we say that $\gamma$ is the \emph{essential ordinal} of the mimic model $K(\mathcal{M})$.
\end{definition}

We will sometimes refer to $(K,\leq,f,t)$ as the \emph{coded model} of $K(\mathcal{M})$. In the following series of lemmas, we will spell out the way in which the mimic models can recover the information about the coded model.

\begin{lemma}
    There is a $\Sigma_3$-formula $\phi_\mathsf{ess}(x)$ in the language of set theory such that $K(\mathcal{M}),v \Vdash \phi_\mathsf{ess}(x)$ if and only if $x$ is the essential ordinal $\gamma$ of $K(\mathcal{M})$.
\end{lemma}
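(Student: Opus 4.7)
The plan is to take $\Psi(n)$ to be the one-parameter uniformisation of the sentence $\psi_i$, namely
\begin{align*}
\Psi(n) \equiv \exists a\,\exists b\,\exists g\,\bigl(&(a = \aleph_{n+2})^\mathrm{L} \wedge (b = \aleph_n)^\mathrm{L} \wedge g \text{ is an injective function} \\
&{}\wedge \dom(g) = a \wedge \forall \alpha \in a\,\forall z \in g(\alpha)\ z \in b\bigr),
\end{align*}
so that $\Psi(\bar{i}) \equiv \psi_i$ and $\Psi$ is $\Sigma_3$ by exactly the analysis given in the footnote to the definition of $\psi_i$. The candidate formula is then
\[
\phi_{\mathsf{ess}}(x) \equiv \Psi(x) \wedge \forall y \in x\,\neg \Psi(y),
\]
which classically expresses that $x$ is the least ordinal at which $\Psi$ holds. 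The complexity bookkeeping is the main technical point: the second conjunct is a bounded universal over a $\Pi_3$-matrix, so a careful rewrite (pulling the outer $\exists a\,\exists b\,\exists g$ of $\Psi(x)$ out, and absorbing the bounded quantifier into the inner $\Pi_2$-matrix via collection in the ambient $\ZFC$) is needed to place $\phi_{\mathsf{ess}}$ at level $\Sigma_3$ of the Levy hierarchy.

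For the equivalence the three ingredients I would use are already on the table: (a) local evaluation of each $\psi_i$ via \Cref{Proposition:Vdash iff vDash}, which extends pointwise to $\Psi(i)$; (b) the characterisation coming out of the construction (via \Cref{Theorem: Main Tool}) that $\mathcal{M}_v \vDash \psi_i$ iff there is $j \in F(v)$ with $i = \gamma + j$; and (c) the fact that $0 \in F(v)$ for every $v \in K$ by the very definition of $F$.

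For the direction ($\Leftarrow$), assume $x = \gamma$. By (c) and (b), $\mathcal{M}_v \vDash \psi_\gamma$, and then (a) gives $K(\mathcal{M}),v \Vdash \Psi(\gamma)$. For the second conjunct, take any $w \geq v$, any $y \in \gamma$ in $\mathcal{M}_w$, and any $w' \geq w$; since $y < \gamma \leq \gamma + j$ for all $j$, (b) and (a) give $w' \not\Vdash \Psi(y)$, as required. Conversely, if $K(\mathcal{M}),v \Vdash \phi_{\mathsf{ess}}(x)$, the first conjunct combined with (a)--(b) forces $x = \gamma + j$ for some $j \in F(v)$, so $x \geq \gamma$. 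If $x > \gamma$ then $\gamma \in x$, and instantiating the bounded universal at $w = w' = v$ with $y = \gamma$ would give $v \not\Vdash \Psi(\gamma)$, contradicting (c). Hence $x = \gamma$.

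The only genuinely non-routine step is the Levy-hierarchy computation sketched above; the rest is a direct unwinding of \Cref{Proposition:Vdash iff vDash} and \Cref{Theorem: Main Tool} together with the key observation that $0$ is always in $F(v)$, which is what both forces $\Psi(\gamma)$ at every node and pins $\gamma$ down as the unique minimal witness.
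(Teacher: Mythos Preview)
Your argument is correct and essentially identical to the paper's: the paper defines $\phi_\mathsf{ess}(x) \equiv x \in \Ord \wedge \psi_x \wedge \forall \beta \in x\, \neg \psi_\beta$ and reasons exactly as you do, the only difference being the extra conjunct $x \in \Ord$ (which also tidies up the step in your converse direction where you silently compare $x$ with $\gamma$). The paper does not justify the $\Sigma_3$ bound at all; your sketch for it does not quite land as written (pulling the existentials of $\Psi(x)$ out still leaves a $\Pi_3$ block under the prefix, giving $\Sigma_4$), but the bound is attainable by instead using uniqueness of the $\mathrm L$-alephs to rewrite $\neg\psi_\beta$ as $\exists a\,\exists b\,\bigl[(a=\aleph_{\beta+2})^{\mathrm L}\wedge(b=\aleph_\beta)^{\mathrm L}\wedge\forall g\,\neg(\cdots)\bigr]$, which is $\Sigma_3$ and stable under the bounded $\forall$ and the conjunction with $\psi_x$.
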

\begin{proof}
    We define the formula $\phi_\mathsf{ess}(x)$ as follows:
    \begin{equation*}
        \phi_\mathsf{ess}(x) \equiv x \in \Ord \wedge \psi_x \wedge \forall \beta \in x \neg \psi_\beta
    \end{equation*}
    By the definition of the mimic model $K(\mathcal{M})$, we know that $K(\mathcal{M}) \Vdash \psi_\gamma$ and $K(\mathcal{M}) \not \Vdash \psi_i$ for $i < \gamma$, i.e., $K(\mathcal{M}) \Vdash \neg \psi_i$ for all $i < \gamma$. As being an ordinal can be expressed by a $\Delta_0$-formula, it follows that $K(\mathcal{M}) \Vdash \gamma \in \Ord \wedge \psi_\gamma \wedge \forall \beta \in \gamma \ \neg \psi_\beta$. 
    
    Conversely, if $K(\mathcal{M}), v \Vdash \phi_\mathsf{ess}(x)$, then it follows that $x \in \mathcal{M}_v$ is an ordinal such that $\mathcal{M}_v \vDash \psi_x$ and for all $\beta < x$ and $w \geq v$ we have $\mathcal{M}_v \vDash \neg \psi_\beta$. By the definition of $K(\mathcal{M})$ it must hold that $x = \gamma$.
\end{proof}

\begin{lemma}
    Let $\gamma$ be the essential ordinal. There is a $\Sigma_1$-formula $\phi_\mathsf{orig}(x,y)$ in the language of set theory such that $K(\mathcal{M}),v \Vdash \phi_\mathsf{orig}(x,\gamma)$ if and only if $x$ is the coded model of $K(\mathcal{M})$ (i.e., $x = (K,\leq,f,t)$).
\end{lemma}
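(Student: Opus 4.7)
The plan is to exploit the fact that, by our construction, $(K,\leq,f,t)$ was chosen to be the $\gamma$-th element in the canonical well-ordering $<_{\mathrm{L}}$ of the constructible universe. So I would define
\[
\phi_{\mathsf{orig}}(x,y) \;\equiv\; \exists \delta \, \exists u \, \bigl( u = \mathrm{L}_\delta \wedge y \in \delta \wedge x \in u \wedge \text{``$x$ is the $y$-th element of $<_{\mathrm{L}_\delta}$''}\bigr),
\]
where the quoted clause is the bounded statement expressing that within the set $\mathrm{L}_\delta$ there are exactly $y$ many $<_{\mathrm{L}_\delta}$-predecessors of $x$. To check that $\phi_{\mathsf{orig}}$ is $\Sigma_1$, I would invoke Fact \ref{Prop:L Sigma_1} together with the standard fact that the $\mathrm{L}$-hierarchy $\delta \mapsto \mathrm{L}_\delta$ and the canonical well-ordering $<_{\mathrm{L}_\delta}$ are uniformly $\Sigma_1$-definable; once $\mathrm{L}_\delta$ has been produced as a witness, the remaining clause is bounded in $u$.

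For correctness, I would verify the two directions as follows. If $K(\mathcal{M}),v \Vdash \phi_{\mathsf{orig}}(x,\gamma)$, then by locality of the existential quantifier the witnesses $\delta,u$ lie in $\mathcal{M}_v$, and by Proposition \ref{Proposition:Delta_0 locally} the bounded body is equivalent to its satisfaction in $\mathcal{M}_v$. Since $\mathcal{M}$ agrees on $\mathrm{L}$, there is a common inner model $N \vDash V = \mathrm{L}$ of all $\mathcal{M}_v$; the set $\mathrm{L}_\delta$ and the well-ordering $<_{\mathrm{L}_\delta}$ are absolute between $N$ and $\mathcal{M}_v$ (this is the whole point of demanding agreement on $\mathrm{L}$), so the bounded clause is equivalent to its truth in $N$, and hence in $\mathrm{L}$. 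By uniqueness of the enumeration of $<_{\mathrm{L}}$, this forces $x = (K,\leq,f,t)$. Conversely, if $x = (K,\leq,f,t)$, then $(K,\leq,f,t) \in \mathrm{L}_\alpha$ gives some $\delta < \alpha$ with $\gamma < \delta$ and $(K,\leq,f,t) \in \mathrm{L}_\delta$; such a $\delta$ already lies in $N$, hence in every $\mathcal{M}_v$, so the required witnesses are available at every node and the bounded body is verified in $\mathrm{L}_\delta$.

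The main obstacle is bookkeeping around the interaction between Kripke forcing and the $\mathrm{L}$-hierarchy: existential quantifiers are evaluated locally in $K(\mathcal{M})$, but one must be sure that the witnesses $\delta$ and $\mathrm{L}_\delta$ needed for the $\Sigma_1$-unfolding of ``$x$ is the $\gamma$-th element of $<_{\mathrm{L}}$'' are genuinely present at each node. Agreement on $\mathrm{L}$ together with the fact that our canonical witness already lives in the common inner model $N$ resolves this cleanly, and Proposition \ref{Proposition: L locally} / Proposition \ref{Proposition:Delta_0 locally} handle the semantic transfer of the bounded and $\mathrm{L}$-related subformulas.
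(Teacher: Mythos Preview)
Your approach is correct and rests on the same underlying idea as the paper: exploit that $(K,\leq,f,t)$ is the $\gamma$-th element in the canonical well-ordering of $\mathrm{L}$, and use the $\Sigma_1$-definability of that well-ordering (Jech, Lemma 13.19) together with local evaluation to transfer truth between $K(\mathcal{M})$ and $\mathrm{L}$.

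The difference is one of packaging. The paper simply takes
\[
\phi_{\mathsf{orig}}(x,y) \equiv \bigl(\text{``$x$ is the $y$-th element in the canonical well-ordering of $\mathrm{L}$''}\bigr)^{\mathrm{L}},
\]
and then invokes \Cref{Proposition:L frame} in a single stroke: that lemma already says that any formula relativised to $\mathrm{L}$ is evaluated locally when the assignment agrees on $\mathrm{L}$, so the equivalence with $\mathrm{L}_\alpha \vDash \text{``$x$ is the $\gamma$-th element''}$ is immediate. You instead unfold the $\Sigma_1$-definition by hand, produce the level $\mathrm{L}_\delta$ explicitly as a witness, and then appeal to \Cref{Proposition:Delta_0 locally} and \Cref{Proposition: L locally} for the bounded core. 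This is essentially re-deriving the relevant instance of \Cref{Proposition:L frame} inline. Both routes work; the paper's is shorter because the semantic-transfer bookkeeping has already been isolated in \Cref{Proposition:L frame}, whereas your version makes the $\Sigma_1$ structure of the formula more visible. One small point of care in your version: the clause ``there are exactly $y$ many $<_{\mathrm{L}_\delta}$-predecessors of $x$'' needs a witnessing enumeration, so you should make sure $\delta$ is taken large enough that this witness also lies in $\mathrm{L}_\delta$ (or fold the enumeration into the outer existential); this is routine but worth stating.
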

\begin{proof}
    Consider the following formula:
    \begin{equation*}
    \phi_\mathsf{orig}(x,y) \equiv \text{``$x$ is the $y$-th element in the canonical well-ordering of $\mathrm{L}$''}^\mathrm{L}    
    \end{equation*}
    Now, by \Cref{Proposition:L frame}, $K(\mathcal{M}), v \Vdash \phi_\mathsf{orig}(x,\gamma)$ is equivalent to
    $$
        \mathrm{L}_\alpha \vDash \text{``$x$ is the $\gamma$-th element int he canonical well-rodering of $\mathrm{L}$''}.
    $$
    The definition of the essential ordinal ensures that this is the case if and only if $x = (K,\leq,f,t)$. To observe that $\phi_\mathsf{orig}(x,y)$ is a $\Sigma_1$-formula use the fact that the canonical well-ordering of $\mathrm{L}$ is $\Sigma_1$-definable (see \cite[Lemma 13.19]{Jech2003}).
\end{proof}

\begin{lemma}
    There is a $\Sigma_3$-formula $\phi_\mathsf{exists}(x,y)$ in the language of set theory, using the coded model $(K,\leq,f,t)$ and the essential ordinal $\gamma$ as parameters, such that $K(\mathcal{M}),v \Vdash \phi_\mathsf{exists}(x,y)$ if and only if $y \in K$ such that $y \leq v$ and $x \in \mathcal{M}_y$.
\end{lemma}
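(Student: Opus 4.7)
The plan is to construct $\phi_\mathsf{exists}(x,y)$ as a conjunction of three $\Sigma_3$-parts using the coded model $(K,\leq,f,t)$ and the essential ordinal $\gamma$ as parameters. The first conjunct is simply $y \in K$, which is $\Delta_0$ with $K$ as a parameter. The second conjunct expresses ``$y \leq v$'' when evaluated at node $v$: by \Cref{Theorem: Main Tool} combined with \Cref{Proposition:Vdash iff vDash}, one has $K(\mathcal{M}),v \Vdash \psi_i$ exactly when $i \in \gamma + F(v)$, where $F(v) = \{f(w) : w \leq v\} \cup \{0\}$. Since $f$ is injective into $\omega \setminus \{0\}$, the condition $f(y) \in F(v)$ is equivalent to $y \leq v$. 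Hence I write this conjunct as $\Psi(\gamma + f(y))$, where $\Psi(k)$ is the uniform $\Sigma_3$-formula one reads off from the definition of $\psi_k$ by regarding $k$ as a free variable.

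The third and most delicate conjunct expresses ``$x \in \mathcal{M}_y$''. Whenever $y \leq v$, one has $F(y) \subseteq F(v)$ and $\mathcal{M}_y = \mathrm{L}_\alpha[G^{F(y)}_\gamma]$ sits as an inner model of $\mathcal{M}_v = \mathrm{L}_\alpha[G^{F(v)}_\gamma]$ by \Cref{Proposition:L[G^A] subseteq L[G^B]}. The set $F(y)$ is $\Delta_0$-definable from $y$ together with the parameters, and so is the forcing poset $\mathbb{P}^{F(y)}_\gamma$ (it lives in $\mathrm{L}_\alpha$). I propose to write the third conjunct as an existential assertion: there exists a set $G$ that is $\mathbb{P}^{F(y)}_\gamma$-generic over $\mathrm{L}$, whose component filters yield witnesses to $\psi_{\gamma + n}$ for each $n \in F(y)$, that is compatible with the ambient distinguished generic producing the current universe, and such that $x \in \mathrm{L}[G]$. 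The predicate ``$x \in \mathrm{L}[G]$'' is $\Sigma_1$ with parameter $G$ (analogously to \Cref{Prop:L Sigma_1}), and the remaining clauses on $G$ are $\Sigma_3$ or below, so the conjunct is $\Sigma_3$.

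The main obstacle lies in verifying this third conjunct in both directions. The ``if'' direction uses $G = G^{F(y)}_\gamma$ as the witness once $y \leq v$, giving $\mathrm{L}[G] = \mathcal{M}_y$. The ``only if'' direction requires pinning $\mathcal{M}_y$ down uniquely inside $\mathcal{M}_v$, despite the a priori possibility of several filters generic for the same forcing; the remedy is to exploit the product structure of $\mathbb{P}^{F(v)}_\gamma$ from \cite{FFS17} together with the compatibility clause, so that the only candidate for $G$ is the projection of the distinguished generic of $\mathcal{M}_v$ onto the coordinates in $F(y)$. Since each of the three conjuncts is $\Sigma_3$, their conjunction is equivalent to a $\Sigma_3$-formula, which is what the lemma demands.
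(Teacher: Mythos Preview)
Your outline is close in spirit to the paper's argument, but it diverges at the crucial third conjunct, and that is where the gap lies. The paper's formula is simply
\[
\exists \mathbb{P} \in \mathrm{L}\bigl((\text{``}\mathbb{P}=\mathbb{P}^{A}_\gamma\text{''})^{\mathrm{L}} \wedge \exists \tau \in \mathrm{L}\,(\text{``}\tau\text{ is a }\mathbb{P}\text{-name''} \wedge \exists G\,(G\text{ generic for }\mathbb{P} \wedge \tau^G = x))\bigr),
\]
with $A = \{0\}\cup\{f(w): w\in K,\, w\leq y\}$. There is no compatibility clause whatsoever: the paper just existentially quantifies over the generic $G$, observes that the whole expression is built from $\Delta_0$-pieces, $\mathrm{L}$-relativised pieces, and outer existential quantifiers (hence is evaluated locally by \Cref{Proposition:Delta_0 locally} and \Cref{Proposition:L frame}), and then asserts directly that $\mathcal{M}_v \vDash \phi_{\mathsf{exists}}(x,w)$ is equivalent to $x \in \mathcal{M}_w$ for $w\leq v$. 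Your first two conjuncts---the explicit $y\in K$ and the use of $\psi_{\gamma+f(y)}$ to express $y\leq v$---are sensible additions; the paper omits them and relies instead on the non-existence of the required generic in $\mathcal{M}_v$ when the relevant button has not been pushed.

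The genuine gap in your proposal is the ``compatibility with the ambient distinguished generic'' clause. The formula $\phi_{\mathsf{exists}}$ may only use the fixed parameters $(K,\leq,f,t)$ and $\gamma$; it has no access to ``the distinguished generic producing the current universe'', because that object is different at every node $v$ and is not among the parameters. So the clause you describe is not expressible as stated, and you give no concrete $\Sigma_3$-formula in the allowed vocabulary that would do the job. You are right to notice that a priori several $\mathbb{P}^{F(y)}_\gamma$-generics might live in $\mathcal{M}_v$, but your remedy does not land: invoking ``the product structure together with the compatibility clause'' is circular unless you can actually write down the clause. The paper, by contrast, takes the bare existential and simply asserts the equivalence with $x\in\mathcal{M}_w$; its proof of the only-if direction is brief and does not spell out the multiplicity issue you raise.
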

\begin{proof}
    Recall from \Cref{Subsection: Constructing the Models} that $\M_v$ is the model $\mathrm{L}_\alpha[G_\gamma^{A_v}]$ where $G_\gamma^{A_v}$ is $\mathrm{L}_\alpha$-generic for $\mathbb{P}^{A_v}_\gamma$ and $A_v = \set{0} \cup \Set{f(w)}{w \leq v}$. Consider the following formula:
    \begin{align*}
        \phi_\mathsf{exists}(x,y) \equiv \ & \exists \mathbb{P} \in \mathrm{L} (\text{``$\mathbb{P} = \mathbb{P}^A_\gamma$ where $A = \set{0} \cup \Set{f(w)}{w \in K \wedge w \leq y}$''}^\mathrm{L} \\
               & \wedge \exists \tau \in \mathrm{L} (\text{``$\tau$ is a $\mathbb{P}$-name''} \wedge \exists G (G \text{ is generic for } \mathbb{P} \text{ and } \tau^G = x)) ).
    \end{align*}
    Note the use of the parameters $(K,\leq,f,t)$ and $\gamma$, and observe that this formula is evaluated locally as it is constructed from $\Delta_0$-formulas, formulas relativised to $\mathrm{L}$ and existential quantification. 
    
    Let $w \in K$ such that $w \leq v$. By general facts about set-theoretical forcing, $x \in \mathcal{M}_w = \mathsf{L}_\alpha[G^{A_w}_\gamma]$ if and only if there exists a $\mathbb{P}^{A_w}_\gamma$-name $\tau \in \mathsf{L}_\alpha$ such that $\tau^{G^{A_w}_\gamma} = x$. Equivalently, $\mathcal{M}_v \vDash \phi_\mathsf{exists}(x,w)$, and in turn holds if and only if $K(\mathcal{M}), v \Vdash \phi_\mathsf{exists}(x,w)$, by our observation on local evaluation. 
\end{proof}

For the next lemma, we introduce some handy notation. Let $\mathcal{M}_v^* = \mathcal{M}_v \setminus \bigcup_{w < v} \mathcal{M}_w$.

\begin{lemma}
    There is a $\Sigma_3$-formula $\phi_\mathsf{birth}(x,y)$ in the language of set theory, using the coded model $(K,\leq,f,t)$ and the essential ordinal $\gamma$ as parameters, such that $K(\mathcal{M}),v \Vdash \phi_\mathsf{birth}(x,y)$ if and only if $y \in K$ such that $y \leq v$ and $x \in \mathcal{M}_y^*$.
\end{lemma}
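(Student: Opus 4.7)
The natural candidate is
\[
\phi_\mathsf{birth}(x,y) \;\equiv\; \phi_\mathsf{exists}(x,y) \,\wedge\, \forall w\in K\,(w < y \to \neg\,\phi_\mathsf{exists}(x,w)),
\]
using the coded model $(K,\leq,f,t)$ and the essential ordinal $\gamma$ as parameters (the latter inherited through $\phi_\mathsf{exists}$). The bounded quantifier $\forall w \in K$ ranges over the parameter $K$, and the idea is simply to conjoin ``$x \in \M_y$'' with ``$x \notin \M_w$ for every strictly earlier $w$'', which together say that $x \in \M_y^*$.

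The correctness argument would proceed by appeal to the previous lemma and the local evaluation of $\phi_\mathsf{exists}$. If $K(\M),v \Vdash \phi_\mathsf{birth}(x,y)$, the first conjunct gives $y \in K$, $y\leq v$ and $x \in \M_y$; the second gives, for each $w \in K$ with $w < y$, that $K(\M),u \not\Vdash \phi_\mathsf{exists}(x,w)$ for every $u \geq v$. Specialising to $u = v$ and noting that $w < y \leq v$, the previous lemma forces $x \notin \M_w$, so $x \in \M_y^*$. Conversely, if $y \in K$, $y \leq v$ and $x \in \M_y^*$, then the first conjunct holds at $v$ by the previous lemma. For any $w \in K$ with $w < y$ and any $u \geq v$, we have $w \leq u$ and $x \notin \M_w$ (the set $\M_w$ does not depend on $u$), so local evaluation yields $K(\M), u \not\Vdash \phi_\mathsf{exists}(x,w)$, whence $K(\M), v \Vdash \neg\,\phi_\mathsf{exists}(x,w)$.

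The main obstacle is the $\Sigma_3$-bound on complexity: syntactically $\neg \phi_\mathsf{exists}$ is $\Pi_3$, and a bounded universal quantifier over the parameter $K$ keeps it $\Pi_3$, so the naive conjunction lies in $\Sigma_4$ rather than $\Sigma_3$. To bring it into $\Sigma_3$ I would unfold $\phi_\mathsf{exists}(x,y)$ as $\exists(\mathbb{P},\tau,G)\,\theta(\mathbb{P},\tau,G,x,y)$ with $\theta$ essentially $\Pi_2$, and then rephrase the outer clause relative to those same existential witnesses: for each $w \in K$ with $w < y$, the restriction of $G$ to the coordinates $A_w$ is a canonical $\mathbb{P}^{A_w}_\gamma$-generic lying in $\mathrm{L}_\alpha[G]$, and ``$x \notin \M_w$'' becomes the $\Pi_2$-statement that no $\mathbb{P}^{A_w}_\gamma$-name in $\mathrm{L}_\alpha$ evaluates to $x$ under that restriction. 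Combined with \Cref{Prop:L Sigma_1}, this places the whole formula in $\exists\forall\exists\Delta_0$ shape, matching the $\Sigma_3$-bound of the preceding lemmas.
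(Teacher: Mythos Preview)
Your formula and correctness argument are exactly the paper's: it defines $\phi_\mathsf{birth}(x,y) \equiv y \in K \wedge \phi_\mathsf{exists}(x,y) \wedge \forall u \in K\,(u < y \to \neg\,\phi_\mathsf{exists}(x,u))$ and argues both directions via the previous lemma, just as you do. Your additional paragraph on the $\Sigma_3$-bound goes beyond the paper, which simply asserts the complexity without justification; your worry that the naive conjunction is only $\Sigma_4$ is legitimate, and your proposed repair---pulling the existential witnesses $\mathbb{P},\tau,G$ to the front and expressing ``$x \notin \M_w$'' as a $\Pi_2$ condition relative to the restricted generic---is the right kind of move to recover $\Sigma_3$.
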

\begin{proof}
    Let $\phi_\mathsf{birth}(x,y)$ be defined as follows:
    \begin{equation*}
        \phi_\mathsf{birth}(x,y) \equiv y \in K \wedge \phi_\mathsf{exists}(x,y) \wedge \forall u \in K (u < y \rightarrow \neg \phi_\mathsf{exists}(x,u)).
    \end{equation*}
    If $w \leq v$ and $x \in \mathcal{M}_w^*$, then it follows from the previous lemma that for all $u < w$, $K(\mathcal{M}) \not \Vdash \phi_\mathsf{exists}(x,u)$, i.e., $K(\mathcal{M}) \Vdash \neg \phi_\mathsf{exists}(x,u)$. On the other hand, we clearly have $v \Vdash \phi_\mathsf{exists}(x,w)$ and hence $v \Vdash \phi_\mathsf{birth}(x,w)$.
    
    Conversely, if $v \Vdash \phi_\mathsf{birth}(x,w)$ for $w \leq v$, it follows that $x \in \mathcal{M}_w$ but $x \notin \mathcal{M}_u$ for $u < w$. Hence, $x \in \mathcal{M}_w^*$.
\end{proof}

\begin{lemma}
    There is a $\Sigma_3$-formula $\phi_\mathsf{passed}(x)$ in the language of set theory, using the coded model $(K,\leq,f,t)$ as a parameter, such that $K(\mathcal{M}), v \Vdash \phi_\mathsf{passed}(x)$ if and only if $x \in K$ such that $x \leq v$.
\end{lemma}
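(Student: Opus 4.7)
The plan is to detect the relation ``$x \in K$ and $x \leq v$'' by testing whether a specific button $\psi_{\gamma + f(x)}$ has been pressed, where $\gamma$ is the essential ordinal. Recall from the construction of the mimic model that $K(\mathcal{M}), v \Vdash \psi_i$ iff there is $j \in F(v)$ with $i = \gamma + j$, where $F(v) = \Set{f(w)}{w \leq v} \cup \set{0}$. Since $f \colon K \to \omega \setminus \set{0}$ is injective, the condition $f(x) \in F(v)$ is, for $x \in K$, equivalent to $x \leq v$ (the exclusion of $0$ from the range of $f$ is essential, as it distinguishes a genuine node from the padding $0 \in F(v)$). This motivates the definition
\begin{equation*}
    \phi_\mathsf{passed}(x) \;\equiv\; x \in K \,\wedge\, \exists \gamma\, \bigl( \phi_\mathsf{ess}(\gamma) \wedge \psi_{\gamma + f(x)} \bigr),
\end{equation*}
taking the coded model $(K,\leq,f,t)$ as a parameter; here $\psi_i$ refers to the uniformly definable family of $\Sigma_3$-sentences from the construction, instantiated at the ordinal $\gamma + f(x)$.

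For the complexity, ``$x \in K$'' is $\Delta_0$ (with $K$ as parameter), $\phi_\mathsf{ess}$ was shown to be $\Sigma_3$, and each $\psi_i$ is uniformly $\Sigma_3$; conjoining and prefixing a single existential quantifier keeps us within $\Sigma_3$. For the forward direction, assume $x \in K$ with $x \leq v$. Then $f(x) \in F(v)$, so by \Cref{Theorem: Main Tool}, $K(\mathcal{M}), v \Vdash \psi_{\gamma + f(x)}$ where $\gamma$ is the essential ordinal of $K(\mathcal{M})$. By the earlier lemma characterising $\phi_\mathsf{ess}$, $K(\mathcal{M}), v \Vdash \phi_\mathsf{ess}(\gamma)$, and since $\gamma \in \mathrm{L}_\alpha \subseteq \mathcal{M}_v$, this $\gamma$ witnesses the existential, yielding $K(\mathcal{M}), v \Vdash \phi_\mathsf{passed}(x)$.

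For the backward direction, suppose $K(\mathcal{M}), v \Vdash \phi_\mathsf{passed}(x)$. Then $x \in K$, and there is some $\gamma' \in \mathcal{M}_v$ with $K(\mathcal{M}), v \Vdash \phi_\mathsf{ess}(\gamma') \wedge \psi_{\gamma' + f(x)}$. The uniqueness clause in the characterisation of $\phi_\mathsf{ess}$ forces $\gamma' = \gamma$, the essential ordinal. Hence $K(\mathcal{M}), v \Vdash \psi_{\gamma + f(x)}$, so $f(x) \in F(v)$. Because $f(x) \neq 0$, there is some $w \leq v$ with $f(w) = f(x)$, and injectivity of $f$ gives $x = w \leq v$. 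The main work has already been done in engineering the mimic model so that the essential ordinal together with the buttons jointly encodes the down-set below $v$; this lemma merely harvests that encoding, so I do not foresee a substantial obstacle beyond the bookkeeping of checking that $\gamma + f(x)$ is formed inside the formula in a way that respects the uniform definition of the $\psi_i$.
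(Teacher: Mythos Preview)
Your proof is correct and follows essentially the same approach as the paper, which simply takes $\phi_\mathsf{passed}(x) \equiv \psi_{f(x)}$ (to be read at the shifted index $\gamma + f(x)$, with $\gamma$ treated as a parameter) and appeals directly to the construction of the mimic model. Your version is a bit more explicit in recovering $\gamma$ via $\phi_\mathsf{ess}$ and in adding the conjunct $x \in K$, but the underlying idea---testing whether the button indexed by $f(x)$ has been pushed and using that $f$ avoids $0$ and is injective---is identical.
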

\begin{proof}
    Consider the following formula:
    \begin{equation*}
        \phi_\mathsf{passed}(x) \equiv \psi_{f(x)}
    \end{equation*}
    The lemma now follows directly from the definition of the mimic model $K(\mathcal{M})$.
\end{proof}

We have now finished our preparations and can prove the following lemma which will show that the mimic model can imitate the predication of the coded model. This is a crucial step for connecting truth in the mimic model with truth in the coded model. 

Given $x \in \mathcal{M}_v$, let $v_x \in K$ be the unique node with $x \in D_{v_x}^*$ and $r_x \in \omega$ such that $\rank(x) = \lambda + r_x$ for some limit ordinal $\lambda$. Define a map $g_v: \mathcal{M}_v \to D_v$ by $g_v(x) = f_{v_x}(r_x)$. Further let $\godel{\cdot}: \mathcal{L}_\IQC \to \omega$ be a fixed Gödel coding function.

\begin{lemma}
    \label{Lemma: Predicate translation}
    Let $P$ be an $n$-ary predicate. There is a $\Sigma_3$-formula $\phi_P(\bar x)$ with parameters only $x_0,\dots,x_{n-1}$ in the language of set theory such that $K(\mathcal{M}), v \Vdash \phi_P(x_0,\dots,x_{n-1})$ if and only if $(K,\leq,D,V), v \Vdash P(g_v(x_0),\dots,g_v(x_{n-1}))$.
\end{lemma}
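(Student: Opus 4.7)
The plan is to assemble $\phi_P(\bar x)$ as a single existential block whose matrix is a conjunction of the four $\Sigma_3$-formulas supplied by the preceding lemmas, augmented by one low-complexity clause about rank and one bounded clause reading off the coded predication table. Concretely, I would take
\begin{align*}
    \phi_P(\bar x) \ \equiv \ \exists \gamma \exists m \exists u \exists v_0 \cdots v_{n-1} \exists r_0 \cdots r_{n-1} \, \big(\, & \phi_{\mathsf{ess}}(\gamma) \wedge \phi_{\mathsf{orig}}(m, \gamma) \wedge \phi_{\mathsf{passed}}(u) \\
    & \wedge \bigwedge_{j<n} (\phi_{\mathsf{birth}}(x_j, v_j) \wedge \rho(x_j, r_j)) \\
    & \wedge \tau_P(m, u, \bar r, \bar v) \,\big),
\end{align*}
where $\rho(x,r)$ is the formula $\exists \lambda (\lambda \text{ is a limit ordinal} \wedge \rank(x) = \lambda + r)$ witnessing that $r$ is the rank-offset of $x$, and $\tau_P(m, u, \bar r, \bar v)$ is the bounded clause that, after parsing $m = (K,\leq,f,t)$, asserts $((r_0, v_0), \dots, (r_{n-1}, v_{n-1})) \in t(u, \godel{P})$. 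This formula has no free variables other than $x_0,\dots,x_{n-1}$, since the essential ordinal $\gamma$ and the coded model $m$ are defined away via $\phi_{\mathsf{ess}}$ and $\phi_{\mathsf{orig}}$.

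For the equivalence, suppose $K(\mathcal{M}), v \Vdash \phi_P(\bar x)$. The uniqueness content built into $\phi_{\mathsf{ess}}$ and $\phi_{\mathsf{orig}}$ forces the witnesses $\gamma$ and $m$ to be the actual essential ordinal and coded model of $K(\mathcal{M})$; $\phi_{\mathsf{birth}}(x_j, v_j)$ forces $v_j$ to be the unique birth node of $x_j$ in $\mathcal{M}$; and $\rho(x_j, r_j)$ pins down the rank-offset $r_j$. Hence the pair $(r_j, v_j)$ is precisely the address of $g_v(x_j) = f_{v_j}(r_j)$ in $D_{v_j}^* \subseteq D_v$, so $\tau_P$ together with $\phi_{\mathsf{passed}}(u)$ expresses that at some node $u \leq v$ of the coded model we have $u \Vdash P(g_v(\bar x))$; persistence in $(K,\leq,D,V)$ upgrades this to $v \Vdash P(g_v(\bar x))$. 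Conversely, if $v \Vdash P(g_v(\bar x))$, then choosing $u = v$, the actual essential ordinal and coded model for $\gamma, m$, and the genuine birth nodes and rank-offsets for $v_j, r_j$, the preceding lemmas certify that all conjuncts of $\phi_P$ are forced at $v$.

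The main obstacle I anticipate is the quantifier-complexity bookkeeping. Each of $\phi_{\mathsf{ess}}, \phi_{\mathsf{orig}}, \phi_{\mathsf{passed}}, \phi_{\mathsf{birth}}$ is $\Sigma_3$, $\rho$ is $\Sigma_1$ (using the standard $\Delta_1$-definability of rank in $\ZFC$), and $\tau_P$ is $\Delta_0$ in its parameters. All of them appear only positively, so the witnesses hidden inside each subformula can be pulled into the outer existential block, and since $\Sigma_3$ is closed under conjunction and existential quantification, the resulting formula is $\Sigma_3$. The delicate point is ensuring that no unbounded universal quantifier is introduced at the top level over a $\Sigma_3$-subformula: the required uniqueness of $\gamma$, of $m$, of each $v_j$, and of each $r_j$ is handled internally by the previously established lemmas and by the functionality of rank, so no external $\Pi$-clauses need to be added.
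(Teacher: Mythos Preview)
Your proposal is correct and follows essentially the same approach as the paper: both assemble $\phi_P$ as a single existential block combining $\phi_{\mathsf{ess}}$, $\phi_{\mathsf{orig}}$, $\phi_{\mathsf{birth}}$, $\phi_{\mathsf{passed}}$, the rank-offset clause, and a lookup in $t(\cdot,\godel{P})$, then argue the two directions via uniqueness of the witnesses and persistence. The only cosmetic differences are that the paper unpacks the coded model into four existential variables $K,\leq,f,t$ and explicitly includes the clauses $r_i\in\omega$ and $w\geq u_i$, both of which are in fact implicit in your formulation since membership in $t(u,\godel{P})\subseteq(\omega\times\omega)^{<\omega}$ already forces $r_j\in\omega$ and $v_j\leq u$.
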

\begin{proof}
    Let $\phi_P(x_0,\dots,x_n)$ be the following formula:
    \begin{align*}
        \exists K, \leq, f, t, \bar r, \bar u, w, \gamma ( &\phi_\mathsf{ess}(\gamma) \wedge \phi_\mathsf{orig}(K,\leq,f,t) \\
        &\wedge \bigwedge_{i < n}(\exists \lambda (\text{``$\lambda$ limit ordinal''} \wedge r_i \in \omega \wedge \rank(x_i) = \lambda + r_i)) \\
        &\wedge \bigwedge_{i < n} \phi_\mathsf{birth}(x_i,u_i) \wedge \phi_\mathsf{passed}(w) \\
        &\wedge \bigwedge_{i < n} w \geq u_i \\
        &\wedge ((r_0,u_0), \dots, (r_{n-1},u_{n-1})) \in t(w,\godel{P})
        ).
    \end{align*}
    Unfolding the formula by using the sequence of lemmas proved above, we see that $K(\mathcal{M}),v \Vdash \phi_P(x_0,\dots,x_n)$ is equivalent to the existence of some $w \leq v$ such that there are $u_i \leq w$ with $x_i \in \mathcal{M}_{u_i}^*$, $r_i \in \omega$ such that $\rank(x_i) = \lambda_i + r_i$ for some limit ordinals $\lambda$ and $((r_0,u_0), \dots, (r_{n-1},u_{n-1})) \in t(w, \godel{P})$. By definition of $t$, this is equivalent to $(K,\leq,D,V), w \Vdash P(f_{u_0}(x_0),\dots,f_{u_{n-1}}(x_{n-1}))$, and hence $(K,\leq,D,V), w \Vdash P(g_v(x_0),\dots,g_v(x_{n-1}))$ by definition of $g_v$. Persistency implies $(K,\leq,D,V), v \Vdash P(g_v(x_0),\dots,g_v(x_{n-1}))$.
    
    Conversely, if $(K,\leq,D,V), v \Vdash P(g_v(x_0), \dots, g_v(x_n))$, then by definition of $g_v$, $(K,\leq,D,V), v \Vdash P(f_{v_{x_0}}(r_0), \dots, f_{v_{x_n}}(r_n))$, where $r_i \in \omega$ are as above. By definition of $t$, we will have that:
    \begin{align*}
        & ((r_0,v_{x_0}), \dots, (r_{n-1},v_{x_{n-1}})) \\
        = & ((f_{v_{x_0}}^{-1}(f_{v_{x_0}}(r_0)),v_{x_0}), \dots, (f_{v_{x_n}}^{-1}(f_{v_{x_{n-1}}}(r_{n-1}),v_{x_{n-1}})) \in t(v,\godel{P})
    \end{align*}
    It follows that $K(\mathcal{M}), v \Vdash \phi_P(x_0,\dots,x_{n-1})$.
\end{proof}

Define a first-order translation $\tau: \mathcal{L}_\IQC \to \mathcal{L}_\in$ by stipulating that $P(\bar x)^\tau = \phi_P(\bar x)$. Note that the range of $\tau$ consists of $\Sigma_3$-formulas. We can now extend the correspondence to all first-order formulas.

\begin{lemma}
    \label{Lemma: Mimic model translation}
    Let $K(\mathcal{M})$ be a mimic model of a well-founded rooted Kripke model $(K,\leq,D,V)$ for $\IQC$. For every formula $\phi$ in the language of first-order logic, we have that $K(\mathcal{M}), v \Vdash \phi(\bar x)^\tau$ if and only if $(K,\leq,D,V), v \Vdash \phi(g_v(\bar x))$.
\end{lemma}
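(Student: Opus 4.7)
The proof proceeds by induction on the complexity of $\phi$. The atomic case is precisely \Cref{Lemma: Predicate translation}, and the cases $\bot$, $\wedge$, $\vee$ follow directly from the induction hypothesis applied at $v$. For the implication $\phi \to \psi$, unfolding the forcing clause at $v$ reduces the claim to applying the induction hypothesis at each $w \geq v$; this is legitimate because the birth-node $v_x$ and rank-offset $r_x$ of any $x \in \mathcal{M}_v$ are intrinsic to $x$, so $g_w(x) = g_v(x)$ whenever $v \leq w$, and persistence in $(K,\leq,D,V)$ keeps $g_v(\bar x)$ inside $D_w$.

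The heart of the argument lies in the two quantifier cases, both of which reduce to showing that $g_v$ is \emph{surjective} onto $D_v$. For $\exists$: $K(\mathcal{M}), v \Vdash (\exists y\, \phi(y,\bar x))^\tau$ holds iff some $y \in \mathcal{M}_v$ forces $\phi(y,\bar x)^\tau$ at $v$, which by the induction hypothesis is equivalent to the existence of such a $y$ with $(K,\leq,D,V), v \Vdash \phi(g_v(y), g_v(\bar x))$. The forward implication is immediate since $g_v(y) \in D_v$; the converse requires, for each witness $d \in D_v$, producing $y \in \mathcal{M}_v$ with $g_v(y) = d$. The universal case is handled analogously by invoking the induction hypothesis and surjectivity of $g_w$ at each $w \geq v$, together with the agreement $g_w(\bar x) = g_v(\bar x)$.

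To establish surjectivity, given $d \in D_v$ let $u \leq v$ be the unique node with $d \in D_u^*$, and let $r = f_u^{-1}(d) \in \omega$. It suffices to exhibit some $y \in \mathcal{M}_u^*$ whose rank has the form $\lambda + r$ for a limit ordinal $\lambda$, for then $v_y = u$ and $r_y = r$, and hence $g_v(y) = f_u(r) = d$. Such a $y$ exists because $\mathcal{M}_u = \mathrm{L}_\alpha[G_\gamma^{F(u)}]$ is a non-trivial generic extension of each $\mathcal{M}_{u'}$ for $u' < u$ (by injectivity of $f$, the set $F(u) \setminus F(u')$ is non-empty, so an extra Cohen-type component is added at step $u$), and the corresponding generics contribute fresh sets of rank $\lambda + r$ for arbitrarily large limits $\lambda$ and every $r < \omega$; at the root, $\mathcal{M}_{v_0}$ already contains sets of every rank below $\alpha$. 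The main obstacle is precisely this surjectivity argument, which requires keeping careful track of the interaction between the rank structure of the iterated forcing extensions and the enumerations $f_u$ of the sets $D_u^*$---an interaction that the definition of $g_v$ was engineered to support. Once surjectivity is in hand, the induction runs uniformly and yields the biconditional claimed in the lemma.
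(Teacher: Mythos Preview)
Your proof is correct and follows essentially the same approach as the paper: induction on formula complexity, with the quantifier cases reduced to surjectivity of $g_v$, established by producing elements in $\mathcal{M}_u^*$ of any prescribed rank $\lambda + r$ from the fresh generic at $u$. Your treatment is in fact slightly more careful than the paper's in making explicit that $g_w$ agrees with $g_v$ on $\mathcal{M}_v$ (needed for $\rightarrow$ and $\forall$) and in arguing surjectivity via the correct birth node $u \leq v$ rather than only at $v$.
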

\begin{proof}
    This is proved by an induction on the complexity of $\phi$ for all $v \in K$. The atomic cases has been taken care of in \Cref{Lemma: Predicate translation} and the cases for the logical connectives $\vee$, $\wedge$ and $\rightarrow$ follow trivially. We will now prove the cases for the quantifiers. 
    
    First observe that the maps defined by $g_v$ are surjective. This is due to the fact that $\mathcal{M}_v^*$ contains elements of rank $\lambda + n$ for any $n < \omega$.\footnote{This can be shown via a construction starting with the generic $x_0 := G \in \mathcal{M}_v^*$ and iterating the operation $x_{n_1} := \set{x_n}$. Then take $y_0 = \bigcup_{n<\omega} x_n$ and $y_{n+1} := \set{y_n}$. It follows that $y_n$ has rank $\lambda + n$ for some limit ordinal $\lambda$.}  
    
    For the existential quantifier, assume that $K(\mathcal{M}), v \Vdash (\exists x \phi(x, \bar z))^\tau$. This is equivalent to the existence of some $x \in \mathcal{M}_v$ such that $K(\mathcal{M}), v \Vdash \phi^\tau(x, \bar z)$. By induction hypothesis, this is equivalent to the existence of some $x \in \mathcal{M}_v$ such that $(K,\leq,D,V), v \Vdash \phi(g_v(x), g_v(\bar z)))$. By the fact that $g_v$ is surjective, we know that the latter is equivalent to $(K,\leq,D,V), v \Vdash \exists x \phi(x, g_v(\bar z))$.
    
    For the universal quantifier, observe that $K(\mathcal{M}), v \Vdash (\forall x \phi(x, \bar z)))^\tau$ is equivalent to the fact that for all $x \in \mathcal{M}_v$ it holds that $K(\mathcal{M}), v \Vdash \phi^\tau(x, g_v( \bar z))$. By induction hypothesis this holds if and only if for all $x \in \mathcal{M}_v$ we have $(K,\leq,D,V), v \Vdash \phi(g_v(x), g_v(\bar z))$. Again, by using the surjectivity of $g_v$, this is equivalent to $(K,\leq,D,V), v \Vdash \forall x \phi(x, g_v(\bar z))$.
\end{proof}

We are now ready to derive the final result.

\begin{theorem}
    \label{Theorem: QLogic(IKP^+(J)) = J}
    Let $T \subseteq \IKP^+ + \MP + \AC$ be a set theory. If $J \in \mathrm{L}_\alpha$ is an intermediate first-order logic that is $\ZFC$-provably Kripke-complete with respect to a class of well-founded frames, then $\QLogic^{\Sigma_3}(T(J)) = J$.
\end{theorem}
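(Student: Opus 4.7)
The plan is to adapt the approach of the propositional and relative first-order theorems proved earlier, now invoking the mimic-model construction developed in this subsection. The inclusion $J \subseteq \QLogic^{\Sigma_3}(T(J))$ is immediate from the definition of $T(J)$. For the converse I argue by contraposition: assume $\phi$ is a first-order sentence with $J \not\vdash \phi$, and I will exhibit a $\Sigma_3$-translation $\sigma$ with $T(J) \not\vdash \phi^\sigma$, whence $\phi \notin \QLogic^{\Sigma_3}(T(J))$.

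First, because $J$ is $\ZFC$-provably complete with respect to a class of well-founded frames and $J \in \mathrm{L}_\alpha \vDash \ZFC$, I work inside $\mathrm{L}_\alpha$ to obtain a well-founded rooted Kripke model $M = (K,\leq,D,V)$ for $\IQC$ with $M \Vdash J$ and $M \not\Vdash \phi$ (passing to the submodel generated by a refuting node if necessary to ensure rootedness). Coding $M$ as a classical first-order structure and applying the downward Löwenheim--Skolem theorem inside $\mathrm{L}_\alpha$, I may assume $M$ is countable; then \Cref{Lemma: Kripke Model Extension of Domains} replaces $M$ by a countable Kripke model with countably increasing domains satisfying exactly the same formulas (under an induced family of maps on the domains). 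This is precisely the input required by the mimic-model construction.

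Next, I form the mimic model $K(\mathcal{M})$ of $M$. By the earlier corollary, $K(\mathcal{M}) \Vdash \IKP^+ + \MP + \AC$, hence $K(\mathcal{M}) \Vdash T$. For $K(\mathcal{M}) \Vdash T(J)$, I exploit the fact that the underlying Kripke frame of $K(\mathcal{M})$ is precisely $(K,\leq)$: any first-order translation of an axiom $A \in J$ into $\mathcal{L}_\in$ interprets $A$ in a first-order Kripke model carried by the same frame $(K,\leq)$, and frame-validity of $A$ on $(K,\leq)$ then yields forcing of the corresponding $A^\sigma$ in $K(\mathcal{M})$. Finally, \Cref{Lemma: Mimic model translation} supplies a translation $\tau$ whose range lies in the $\Sigma_3$-formulas with $K(\mathcal{M}), v \Vdash \phi^\tau$ iff $M, v \Vdash \phi$ (for the sentence $\phi$); since $M \not\Vdash \phi$, we obtain $K(\mathcal{M}) \not\Vdash \phi^\tau$, witnessing $\phi \notin \QLogic^{\Sigma_3}(T(J))$.

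The main obstacle I anticipate is the verification that $K(\mathcal{M}) \Vdash A^\sigma$ for \emph{every} $A \in J$ and \emph{every} first-order translation $\sigma$ (not only the $\tau$ used to refute $\phi$). The key is to justify carefully that any such $\sigma$ turns $K(\mathcal{M})$ into a first-order Kripke model whose domains are the $\mathcal{M}_v$ on the same underlying frame $(K,\leq)$, so that validity of $A$ on the frame transfers to the forcing of $A^\sigma$. A secondary subtlety is extracting a rooted well-founded counter-model inside $\mathrm{L}_\alpha$ from $\ZFC$-provable Kripke-completeness with respect to well-founded frames; this should follow by passing to the submodel generated by a refuting node, but one must check that the completeness proof formalizes in $\mathrm{L}_\alpha$ without appealing to large objects outside it.
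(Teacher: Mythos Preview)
Your proposal is correct and follows essentially the same route as the paper: obtain a well-founded countable Kripke counter-model in $\mathrm{L}_\alpha$, pass to the mimic model $K(\mathcal{M})$, and invoke \Cref{Lemma: Mimic model translation} together with the $\Sigma_3$-translation $\tau$ to refute $\phi^\tau$. In fact you are more careful than the paper's own proof on one point: you explicitly argue that $K(\mathcal{M}) \Vdash T(J)$ via frame-validity of $J$ on $(K,\leq)$ (so that every translation $\sigma$, not just $\tau$, yields $K(\mathcal{M}) \Vdash A^\sigma$ for $A \in J$), whereas the paper's proof only records that $K(\mathcal{M}) \Vdash \IKP^+$ and leaves the $T(J)$-step implicit.
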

\begin{proof}
    Let $J \in \mathrm{L}_\alpha$ be $\ZFC$-provably Kripke-complete first-order logic. It is clear that $J \subseteq \QLogic(T(J))$. For the other direction, assume that $J \not \vdash \phi$. By our assumptions, there is a Kripke model $(K,\leq,D,V) \in \mathrm{L}_\alpha$ such that $(K,\leq,D,V) \not \Vdash \phi$. Due to \Cref{Lemma: Kripke Model Extension of Domains} we can assume without loss of generality that $(K,\leq,D,V)$ has countably increasing domains. Let $K(\mathcal{M})$ be a mimic model obtained from $(K,\leq,D,V)$. By \Cref{Lemma: Mimic model translation} it follows that $K(\mathcal{M}), v \not \Vdash \phi^\tau$. As $K(\mathcal{M})$ is a model of $\IKP^+$ and $T \subseteq \IKP^+$, it follows that $\IKP^+ \not \vdash \phi^\tau$ so that $\phi \notin \QLogic(\IKP^+)$. This finishes the proof of the theorem.
\end{proof}

We conclude this section by stating some important corollaries.

\begin{corollary}
    Let $T \subseteq \IKP^+ + \MP + \AC$ be a set theory. If $J \in \mathrm{L}_\alpha$ is an intermediate first-order logic that is $\ZFC$-provably Kripke-complete with respect to a class of well-founded frames, then $\QLogic(T(J)) = J$.
\end{corollary}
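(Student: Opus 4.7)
The plan is to derive this corollary as an essentially immediate consequence of \Cref{Theorem: QLogic(IKP^+(J)) = J}, exploiting the monotonicity of the operator $\QLogic^C$ in the class $C$ of formulas to which translations are restricted. The point of formulating the preceding theorem with the $\Sigma_3$-restriction was precisely to make this stronger unrestricted statement fall out for free.

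First I would verify the inclusion $J \subseteq \QLogic(T(J))$, which is built into the definition of $T(J)$: for every formula $A \in J$ and every first-order translation $\sigma$, the sentence $A^\sigma$ is by construction an axiom of $T(J)$, so $T(J) \vdash A^\sigma$ holds trivially, and hence $A \in \QLogic(T(J))$. For the reverse inclusion, I would invoke the general observation that for any class $C$ of formulas and any theory $T$, one has $\QLogic(T) \subseteq \QLogic^{C}(T)$, because membership in $\QLogic(T)$ demands $T \vdash \phi^\sigma$ for \emph{all} translations $\sigma$, while membership in $\QLogic^{C}(T)$ only demands it for those $\sigma$ with $\ran(\sigma) \subseteq C$. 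Specialising to $C$ being the class of $\Sigma_3$-formulas and to $T(J)$, this yields $\QLogic(T(J)) \subseteq \QLogic^{\Sigma_3}(T(J))$.

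Chaining the two inclusions with \Cref{Theorem: QLogic(IKP^+(J)) = J} gives
\begin{equation*}
    J \ \subseteq \ \QLogic(T(J)) \ \subseteq \ \QLogic^{\Sigma_3}(T(J)) \ = \ J,
\end{equation*}
so all three sets coincide and $\QLogic(T(J)) = J$, as required. There is no real obstacle to overcome here: the hard technical content, namely the mimic-model construction of \Cref{Subsection: Constructing the Models} together with \Cref{Lemma: Mimic model translation} that witnesses non-theorems of $J$ via $\Sigma_3$-translations, has already been carried out in establishing the theorem, and the corollary is just the observation that exhibiting a counterexample translation whose image lies in $\Sigma_3$ is \emph{a fortiori} exhibiting a counterexample translation in the unrestricted sense.
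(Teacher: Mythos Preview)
Your proof is correct and takes essentially the same approach as the paper: both derive the corollary directly from \Cref{Theorem: QLogic(IKP^+(J)) = J}, with your write-up making the monotonicity step $\QLogic(T(J)) \subseteq \QLogic^{\Sigma_3}(T(J))$ explicit. The paper's proof additionally invokes the fact that axiomatisable logics lie in $\mathrm{L}_\alpha$, which is superfluous given that the hypothesis already assumes $J \in \mathrm{L}_\alpha$; this appears to be a vestige of an intended ``axiomatisable'' hypothesis paralleling \Cref{Corollary: axiomatisable logics relative case}, but for the statement as written your argument is the clean one.
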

\begin{proof}
    As in the proof of \Cref{Corollary: axiomatisable logics relative case}, we use the fact that every axiomatisable first-order logic is contained in $\mathrm{L}_\alpha$. The result then follows with \Cref{Theorem: QLogic(IKP^+(J)) = J}.
\end{proof}

\begin{corollary}
    \label{Corollary: First-Order Logic IKP}
    Let $T \subseteq \IKP^+ + \MP + \AC$ be a set theory. The first-order logic of $T$ is $\IQC$, i.e., $\QLogic(T) = \IQC$. In particular, $\QLogic(\IKP) = \IQC$.
\end{corollary}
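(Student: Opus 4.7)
The plan is to obtain this corollary as a direct application of the preceding corollary to the specific choice $J = \IQC$. First I would observe that intuitionistic first-order logic $\IQC$ is axiomatisable in the usual sense—it admits a standard recursive (indeed finite-schema) axiomatisation—so by Craig's trick, as used in the proof of \Cref{Corollary: axiomatisable logics relative case}, we may view $\IQC$ as a $\Delta^0_1$-definable set and conclude $\IQC \in \mathrm{L}_{\omega + 2} \subseteq \mathrm{L}_\alpha$.

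Next I would invoke the classical completeness theorem for intuitionistic first-order logic with respect to Kripke semantics. The standard proof (e.g., via the canonical model construction of maximal saturated theories along a tree of finite partial information, or via the argument sketched in \cite{TroelstraVanDalen}) in fact establishes completeness with respect to countable tree-like Kripke frames, and every tree is well-founded. This completeness theorem is provable in $\ZFC$. Hence $\IQC$ meets the hypothesis of the preceding corollary, so $\QLogic^{\Sigma_3}(T(\IQC)) = \IQC$, and \emph{a fortiori} $\QLogic(T(\IQC)) = \IQC$.

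Finally I would remove the reference to $T(\IQC)$ in favour of $T$ itself. Since any theory $T$ with $\IKP \subseteq T \subseteq \IKP^+ + \MP + \AC$ is by definition based on intuitionistic first-order logic, every instance $A^\sigma$ of an $\IQC$-theorem under a first-order translation $\sigma$ into $\mathcal{L}_\in$ is already provable in $T$; therefore $T(\IQC) = T$, and we obtain $\QLogic(T) = \IQC$. The special case $T = \IKP$ yields the announced equality $\QLogic(\IKP) = \IQC$.

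The only part that is not purely formal is the appeal to Kripke completeness of $\IQC$ with respect to a class of \emph{well-founded} frames, rather than arbitrary ones; the main obstacle is therefore to be careful that the standard completeness proof does indeed deliver tree-like (hence well-founded) countermodels, so that the hypothesis of the preceding corollary is genuinely satisfied. Once that is in place, everything else is a direct specialisation.
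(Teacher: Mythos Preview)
Your proposal is correct and follows essentially the same route as the paper: the paper's proof simply applies the preceding corollary with $J = \IQC$, invoking that $\IQC$ is $\ZFC$-provably Kripke-complete with respect to a class of well-founded frames (citing Takeuti), and leaves the identification $T(\IQC) = T$ implicit. One small caution: the parenthetical ``tree-like (hence well-founded)'' is not literally true---a rooted linear order such as $\mathbb{Q}_{\geq 0}$ is a tree but not well-founded---so the point is not that trees are automatically well-founded, but that the standard completeness construction builds the frame on finite sequences of saturated theories, and \emph{that} tree is well-founded; you already flag this as the step needing care, so just make sure the justification rests on the construction rather than on the word ``tree''.
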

\begin{proof}
     This follows from the fact that $\IQC$ is $\ZFC$-provably Kripke-complete with respect to a class of well-founded frames (see the proof of \cite[Theorem 8.17]{Takeuti1987}), and applying the previous corollary.
\end{proof}

\begin{corollary}
    Let $T \subseteq \IKP^+ + \MP + \AC$ be a set theory. Then $\QLogic(T(\mathbf{QHP}_k)) = \mathbf{QHP}_k$ for $k < \omega$.
\end{corollary}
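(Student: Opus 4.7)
The plan is to verify that $\mathbf{QHP}_k$ meets the two hypotheses of the preceding corollary and then invoke it directly. The two things to check are: (a) $\mathbf{QHP}_k \in \mathrm{L}_\alpha$, and (b) $\mathbf{QHP}_k$ is $\ZFC$-provably Kripke-complete with respect to a class of well-founded frames. Once both are in hand the result is immediate.

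For (a), I would note that $\mathbf{QHP}_k$ is given by a finite (in fact, single additional schematic) axiomatisation over $\IQC$ characterising frames of depth at most $k$, so it is axiomatisable, hence (by Craig's Lemma, as in the proof of \Cref{Corollary: axiomatisable logics relative case}) recursively axiomatisable and therefore a $\Delta^0_1$ set of natural numbers under G\"odel coding. Any such set lies in $\mathrm{L}_{\omega+2} \subseteq \mathrm{L}_\alpha$ by the minimality assumption on $\alpha$.

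For (b), I would appeal to \cite[Theorem 6.3.8]{GabbayShehtmanSkvortsov2009}, which is the same completeness theorem already used in \Cref{Corollary: QLogic_E more logics}, stating that $\mathbf{QHP}_k$ is complete with respect to the class of Kripke frames of depth at most $k$. The key further observation is that any such frame is trivially well-founded: since every chain has length bounded by the fixed finite number $k < \omega$, there is no infinite strictly ascending sequence. Thus the characterising class consists of well-founded frames, so the hypothesis of the previous corollary is satisfied.

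With (a) and (b) established, the previous corollary applies with $J = \mathbf{QHP}_k$, yielding $\QLogic(T(\mathbf{QHP}_k)) = \mathbf{QHP}_k$ for every $T \subseteq \IKP^+ + \MP + \AC$ and every $k < \omega$. No step here is substantive: the only mild subtlety is recognising that well-foundedness of the characterising class follows immediately from the finite depth bound, so there is no real obstacle, just a matter of citing the right completeness theorem and noting the finite-depth-implies-well-founded observation.
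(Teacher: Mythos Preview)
Your proposal is correct and follows essentially the same approach as the paper: the paper's proof simply cites \cite[Theorem 6.3.8]{GabbayShehtmanSkvortsov2009} for completeness of $\mathbf{QHP}_k$ with respect to frames of depth at most $k$ and leaves implicit both the well-foundedness of such frames and the membership $\mathbf{QHP}_k \in \mathrm{L}_\alpha$, whereas you spell these out explicitly. The only minor quibble is terminological: you phrase well-foundedness as ``no infinite strictly ascending sequence,'' while the construction in the paper uses well-foundedness in the downward sense; but since frames of bounded finite depth have no infinite chains in either direction, your conclusion stands.
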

\begin{proof}
    This follows from the fact that $\mathbf{QHP}_k$ is complete with respect to the class of frames of depth at most $k$ (see \cite[Theorem 6.3.8]{GabbayShehtmanSkvortsov2009}).
\end{proof}

\subsection{The First-Order Logic with Equality of IKP}

Given the results in the previous section, a natural question would be whether these results extend to logic \emph{with} equality. In this section we show that that is not the case. 

\begin{theorem}
    Let $T$ be a set theory based on intuitionistic logic containing the axioms of extensionality, empty set and pairing. Then the first-order logic with equality of $T$, $\QLogic^=(T)$, is strictly stronger than $\IQC^=$, i.e., $\IQC^= \subsetneq \QLogic^=(T)$.
\end{theorem}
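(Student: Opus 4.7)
The plan is to exhibit a first-order sentence in the language of pure equality that is derivable in $T$ under every equality translation but is not a theorem of $\IQC^=$. The obvious candidate, given the hypotheses on $T$, is
\[
\phi \;\equiv\; \exists x \exists y\, \neg(x = y),
\]
i.e.\ the assertion that the domain has at least two distinct elements. Because $\phi$ contains no non-logical symbols other than equality, and every first-order equality translation $\sigma$ must send equality to equality, we have $\phi^\sigma = \phi$ for all such $\sigma$; so it suffices to show that $\phi \notin \IQC^=$ while $T \vdash \phi$.

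For the first point, observe that the one-element structure (with its unique element equal to itself) is a model of $\IQC^=$ in which $\phi$ fails. Equivalently, one can exhibit a one-node Kripke model with a singleton domain and the trivial congruence, which by \Cref{Theorem:IQC true in Kripke models} shows $\IQC^= \not\vdash \phi$.

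For the second point, I would argue directly inside $T$: the empty-set axiom provides a set $a$ with $\forall x\,(x\notin a)$, and pairing yields $b$ with $\forall x\,(x\in b \leftrightarrow (x = a \vee x = a))$, i.e.\ $b = \{a\}$. Then $a \in b$ but $a \notin a$, so $b$ and $a$ differ on the membership predicate at $a$; by extensionality this forces $\neg(a = b)$, and existential introduction gives $\phi$. This step is where extensionality is essential — without it, the sets $a$ and $b$ need not be distinguishable by the set-theoretic primitives, so $a \neq b$ could not be derived.

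Combining the two points yields $\phi \in \QLogic^=(T) \setminus \IQC^=$, and since $\IQC^= \subseteq \QLogic^=(T)$ is immediate from the definition of $\QLogic^=(T)$, the strict inclusion $\IQC^= \subsetneq \QLogic^=(T)$ follows. There is no substantial obstacle here; the only care needed is the verification that $\phi^\sigma$ really does coincide with $\phi$ (a triviality once one notes that $\phi$ has no non-equality relation symbols) and the isolation of exactly which axioms of $T$ are invoked (empty set for the witness $a$, pairing for the witness $b$, and extensionality to turn the asymmetry $a \in b \wedge a \notin a$ into the inequality $a \neq b$).
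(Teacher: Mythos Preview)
Your proof is correct and follows the same overall strategy as the paper---exhibit a pure-equality sentence provable in $T$ but not in $\IQC^=$---though with a simpler witness. The paper instead uses the implication
\[
[\exists x\,\exists y\,\forall z\,(z = x \vee z = y)] \rightarrow [\exists x\,\forall z\,(z = x)],
\]
argues that its antecedent is refutable in $T$ (via the three pairwise-distinct sets $0,1,2$), and concludes the implication by \emph{ex falso quodlibet}; non-derivability in $\IQC^=$ is witnessed by a two-element one-node Kripke model. Your choice of $\exists x\,\exists y\,\neg(x=y)$ is more direct: two sets suffice, the countermodel needs only one element, and no detour through an implication is required.

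One small quibble: the step where you infer $\neg(a=b)$ from $a\in b \wedge a\notin a$ does not actually use the extensionality axiom but rather its converse, namely the substitution property of equality (Leibniz's law), which is already part of the logical equality apparatus underlying $T$. Extensionality says that coextensive sets are equal; what you need is that equal sets are coextensive, so that from $a=b$ and $a\in b$ you may conclude $a\in a$. The paper itself phrases the analogous step as ``using extensionality,'' so you are in good company, but strictly speaking neither argument requires extensionality proper, and your remark that ``this step is where extensionality is essential'' overstates its role.
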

\begin{proof}
    Let $\phi$ denote the following formula in the language of $\IQC^=$:
    $$
        [\exists x \exists y \forall z (z = x \vee z = y)] \rightarrow [\exists x \forall z (z = x)].
    $$
    Intuitively, $\phi$ formalises the statement ``if there are at most two objects, then there is at most one object.'' Note that $\phi^\sigma = \phi$ holds for any first-order equality translation $\sigma$ into the language of set theory. By the principle of \emph{ex falso quodlibet} it therefore suffices to show that the antecedent of $\phi$ is false in $T$. Let us call this antecedent $\psi.$
    
    We give an informal argument that can be easily transferred into a formal proof in the theory $T$. By pairing and emptyset, we can obtain the sets $0 = \emptyset$, $1 = \set{\emptyset}$, and $2 = \set{\emptyset,\set{\emptyset}}$. Suppose $\psi$. Then, by transitivity of equality, we know that $0 = 1 \vee 0 = 2 \vee 1 = 2$ must hold. In each case, we can derive falsum, $\bot$, using extensionality and the empty set axiom. With $\vee$-elimination and $\rightarrow$-introduction, we conclude that $\neg \psi$ holds.
    
    This argument shows that $\phi \in \QLogic^=(T)$. To finish the proof of the theorem, it is enough to show that $\phi \notin \IQC^=$. This follows by completeness as follows. Consider the Kripke model for $\IQC^=$ that consists of one node with a domain of two distinct points: the antecedent of $\psi$ will be true in this model but the consequent fails.
\end{proof}

\begin{corollary}
    \label{Corollary: First-order logic with equality of IKP is stronger}
    The first-order logic with equality of any set theory $T$ considered in this paper, such as $\IKP$, $\IKP^+ + \MP + \AC$, $\CZF$ and $\IZF$, is stronger than $\IQC^=$. \qed
\end{corollary}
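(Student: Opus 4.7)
The plan is to directly invoke the preceding theorem, which establishes $\IQC^= \subsetneq \QLogic^=(T)$ for any set theory $T$ based on intuitionistic logic and containing the axioms of extensionality, empty set, and pairing. Since each of the four theories named in the corollary---$\IKP$, $\IKP^+ + \MP + \AC$, $\CZF$, and $\IZF$---is formulated over intuitionistic first-order logic, it suffices to verify that each of them contains the three required axioms.

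For $\IKP$, the definition given in \Cref{Section: IKP and Its Kripke Semantics} explicitly lists extensionality, empty set and pairing among its axioms, so the theorem applies at once. The extension $\IKP^+ + \MP + \AC$ contains $\IKP$ and remains based on intuitionistic logic, so it inherits the three axioms. The definition of $\CZF$ (see \Cref{Definition:CZF}) explicitly includes extensionality, pairing and empty set alongside union, strong infinity and various axiom schemes; likewise the definition of $\IZF$ names extensionality, pairing and empty set among its axioms. Hence the hypotheses of the preceding theorem are satisfied in every case, and the conclusion $\IQC^= \subsetneq \QLogic^=(T)$ follows for each listed $T$.

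There is essentially no obstacle here: the mathematical content is entirely contained in the preceding theorem, which exhibits the single formula ``if there are at most two objects then there is at most one object'' and shows it is derivable in any such $T$ but not in $\IQC^=$. The corollary is a bookkeeping step, confirming that the three distinct sets $\emptyset$, $\{\emptyset\}$ and $\{\emptyset,\{\emptyset\}\}$ used in that argument are available in each of the theories named, which is immediate from their axiomatisations.
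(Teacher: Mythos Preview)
Your proposal is correct and matches the paper's approach: the corollary is stated with a bare \qed, indicating it is an immediate consequence of the preceding theorem, and you have simply spelled out that each listed theory satisfies its hypotheses.
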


We close this section with the following question.

\begin{question}
    What is the first-order logic with equality $\QLogic^=(\IKP)$ of $\IKP$?
\end{question}

\section{Conclusions and Open Questions}
We have seen that $\IKP$ is a very well-behaved theory from the logical point of view---in fact, this applies to every subtheory of $\IKP^+ + \MP + \AC$. This result is also conceptually important: Constructive set theories are usually formulated on the basis of $\IQC$ in such a way that the set-theoretic axioms should not strengthen the logic, ensuring \emph{intuitionistic reasoning}. We have shown that $\IKP$ indeed satisfies this requirement. Determining the first-order logic of $\CZF$, and thus ensuring that $\CZF$ is logically and conceptually well-behaved, is an open problem.

\begin{question}
    What is the first-order logic of $\CZF$? Is it the case that $\QLogic(\CZF) = \IQC$?
\end{question}

Due to the failure of exponentiation (see \Cref{Subsection: A failure of exponentiation}) it is clear that our techniques above cannot directly be used to obtain the results of this article for $\CZF$. With the semantics for $\CZF$ that the authors are currently aware of, it seems not possible to obtain mimic models for $\CZF$.

The situation for $\IZF$ is slightly different as Friedman and Ščedrov (see \Cref{Theorem: Friedman Scedrov}) showed that $\IQC \subsetneq \QLogic(\IZF) \subsetneq \CQC$. A challenging open problem is to give a better description of the first-order logic of $\IZF$. 

\begin{question}
    What is the first-order logic of $\IZF$? For example, is it possible to give an axiomatisation of $\QLogic(\IZF)$ or a concrete class of Kripke models that characterise $\QLogic(\IZF)$?
\end{question}

A first step in this direction might be to determine the relative first-order logic of $\IZF$.

Moreover, our study also contributes to the analysis of the admissible rules of the theory $\IKP$: Knowing the logic of a theory is the first step in analysing its admissible rules. For example, as we have shown that $\Logic(\IKP) = \IPC$, it follows that any propositional rule that is admissible in $\IKP$ must be admissible in $\IPC$ as well. It remains to determine the lower bound.

\begin{question}
    What are the admissible rules of $\IKP$?
\end{question}

\bibliographystyle{plain}
\bibliography{bibliography}

\end{document}